\definecolor{green}{rgb}{0.1,0.1,0.1}
\newtheorem{theorem}{Theorem}
\newtheorem{definition}{Definition}[section]
\newtheorem{assumption}{Assumption}
\newtheorem{lemma}{Lemma}
\newtheorem{remark}{Remark}[section]
\newcommand\bA{{\bf A}}
\newcommand \mcA{\mathcal{A}}
\newcommand\bB{{\bf B}}
\newcommand\mcD{{\mathcal D}}
\newcommand\bE{{\bf E}}
\newcommand\bF{{\bf F}}
\newcommand \mscF{\mathscr{F}}
\newcommand\bI{{\bf I}}
\newcommand\mcH{{\mathcal H}}
\newcommand\bR{{\bf R}}
\newcommand\mbR{{\mathbb R}}
\newcommand\mcT{{\mathcal T}}
\newcommand\bW{{\bf W}}
\newcommand\bX{{\bf X}}
\newcommand\eg{\varepsilon}
\DeclareMathOperator{\Cov}{Cov}
\DeclareMathOperator{\logit}{logit}
\DeclareMathOperator{\vc}{vec}
\DeclareMathOperator{\argmin}{argmin}
\DeclareMathOperator{\trace}{trace}
\newcommand{\norm}[1]{\left\lVert{#1}\right\rVert}
\newcommand{\edits}[1]{{\color{black}{#1}}}
\title{The Function-on-Scalar LASSO \\ with Applications to Longitudinal GWAS}
\author[1]{Rina Foygel Barber}
\author[2]{Matthew Reimherr \thanks{Corresponding Author: Department of Statistics, Pennsylvania State University, University Park, PA, 16802}}
\author[2]{Thomas Schill }
\affil[1]{University of Chicago}
\affil[2]{Pennsylvania State University}
\date{}
\begin{document}
\maketitle

\begin{abstract}
We present a new methodology for simultaneous variable selection and parameter estimation in function-on-scalar regression with an ultra-high dimensional predictor vector.  We extend the LASSO to functional data in both the \textit{dense} functional setting and the \textit{sparse} functional setting.  We provide theoretical guarantees which allow for an exponential number of predictor variables.  Simulations are carried out which illustrate the methodology and compare the sparse/functional methods.  Using the Framingham Heart Study, we demonstrate how our tools can be used in genome-wide association studies, finding a number of genetic mutations which affect blood pressure and are therefore important for cardiovascular health. 
\end{abstract}

{{\textit Keywords:} Functional Data Analysis, High-Dimensional Regression, Variable Selection, Functional Regression}

%\newpage
\section{Introduction}
Over the last several decades, technological advances have supported and necessitated the rapid growth of high-dimensional statistical methods.  One the most important applications for these methods are genome-wide association studies (GWAS).  In these studies, researchers search through hundreds of thousands or millions of genetic mutations known as SNPs, single nucleotide polymorphisms, finding those which significantly impact an outcome or phenotype of interest, e.g.\ blood pressure, diabetes, asthma, etc.  GWAS have been hugely successful at finding gene/disease associations as evidenced by the massive repositories of genetic studies and findings, such as dbGaP (http://www.ncbi.nlm.nih.gov/gap).  Next-generation or high-throughput sequencing technologies are capable of sequencing entire genomes, producing ever-larger genetic datasets to explore.  Thus, to further our understanding of the genetic architecture of complex human diseases, there is a substantial and continuing need for powerful, high-dimensional association techniques.  

The vast majority of GWAS are cross-sectional, examining only one point in time.  The aim of this paper is to present a new framework which combines functional data analysis, FDA, and machine leaning for finding and estimating significant effects on longitudinally measured outcomes.  We refer to this methodology as \textit{Function-on-Scalar LASSO}, or FS-LASSO.  \textcolor{black}{As in \citet{reiss:etal:2010}, we use the term \textit{Function-on-Scalar} to help distinguish our setting from the wide array of regression problems now in existence.}  
{\color{black} In GWAS, the ``function'' is the time-varying phenotype of the individual, and it is being regressed on the ``scalar'',  which is the individual's constant (non-time-varying) genotype.}

Our aim is to simultaneously exploit the sparse effect of the SNPs and the smooth nature of the longitudinal outcomes.  While we are strongly motivated by genetic studies, our methods are general and allow for any setting with a longitudinal/functional outcome and high-dimensional scalar predictors.  Our primary goal is to select and estimate the effect of predictors in the following functional linear model
\begin{align}
Y_n(t) = \mu(t) + \sum_{i=1}^I X_{ni} \beta_i(t) + \eg_n(t). \label{e:model}
\end{align}
Here $Y_n(t)$ is the value of a quantitative outcome for subject $n\in \{1,\dots,N\}$, at time $t \in \mcT \subset \mbR.$  The scalars $X_{ni}$ are real-valued, though in our application they take values in $\{0,1,2\}$ indicating the minor allele count for the SNP.  The number of predictors, $I$, is allowed to be much greater than the sample size $N$.  This is known as a ``scalar predictor/functional response'' model.  Functional data analysis (FDA) now consists of two main branches: (1) sparse FDA, where the outcomes, $Y_n$, are observed at a relatively small number of time points and are contaminated with noise, and (2) dense FDA, where outcomes are observed at a large number of time points (possibly with a small level of noise). 
\textcolor{black}{In practice, some data clearly fall into one of these two categories, say with hundreds or thousands of observations per subject, or with only a handful.  There are also many settings where it is not clear which scenario one is in.  While establishing a clear cutoff still a relatively open problem, we refer the interested reader to \citet{li:hsing:2010} and \citet{zhang:wang:2015} who suggest the line is when the number of points per curve is greater/less than the quad-root of the sample size.  We emphasize that this is just a basic rule of thumb, and care must be taken with each application.}
We present our methodology for each scenario and we later explore how their performances differ via simulations.  
\textcolor{black}{In the functional case, as we will see, our methods and theory will actually include a wide range of settings beyond just \eqref{e:model}}.

At the heart of our methodology is the now classic idea of combining basis expansions for estimating the functions $\beta_i$, with proper penalties which induce sparse estimates.  While the methods and theory are new, we can phrase the problem as a type of group LASSO, which allows us to utilize existing computational tools.

The contributions of this paper include the following.  First, we provide new asymptotic results for the sparse setting.  Our results can be viewed as an extension of the varying coefficient methods discussed in \citet{wei:huang:li:2011}.  In particular, we present a new restricted eigenvalue result which is one of the cornerstones for developing convergence rates in high dimensions, as well as the accompanying asymptotic convergence rates for our estimates.  Second, we provide a new methodology and accompanying asymptotic theory for a broad class of dense settings, which, to the best of our knowledge, has not been explored before.  Interestingly, we show that the FS-LASSO applied to outcomes in any separable Hilbert space achieves the exact same rates of convergence as in the scalar setting.  This could have broad implications for not only traditional functional outcomes, but also spatial processes, functional panels, and imaging data such as fMRI.  Finally, we demonstrate via simulations some surprising results concerning the choice between the sparse and dense tool sets.  In particular, we find that a dense approach is comparable to the sparse in terms of variable selection even in traditionally sparse settings.  Furthermore, the dense methods can be carried out at a fraction of the computational cost (both in terms of power and memory).  However, the sparse methods produce estimates which are more accurate in traditionally sparse settings.  This opens the doors to interesting two-stage procedures where variable screening is done using dense tools, and final estimation is done via sparse ones.

\paragraph{\color{black} Related literature} For foundations on functional data analysis we refer to \citet{ramsay:silverman:2005} and \citet{hkbook}, while an overview of machine learning can be found in \citet{HaTiFr:2001} and \citet{james:etal:2013}.  The literature on functional regression is now quite large, but we attempt to provide several key methodological papers, which help outline the field.
Functional regression methods for sparse data include the following.  \citet{hoover:etal:1998} examine spline based methods for estimating a simpler form of the functional predictor/functional response model. \citet{fan:zhang:2000} provide a two step method based on local polynomial smoothing to estimate a functional predictor/scalar response model.  \citet{yao:muller:wang:2005AS} present what has likely become the most common method based on scatter plot smoothing (i.e. local polynomial smoothing) and functional principal components for estimating a full functional predictor/functional response model.  \citet{zhu:etal:2012} extend local linear smoothing methods for estimating scalar predictor/multivariate functional response models.  Methods for high dense FDA include the following.  \citet{cardot:ferraty:sarda:2003} explore spline based methods for a functional predictor/scalar response model. \citet{kokoszka:maslova:s:z:2008} use a PCA based approach for estimating a full functional predictor/functional response model.  \citet{james:wang:zhu:2009} incorporate shrinkage methods in estimating a functional predictor/scalar response model (for the purposes of estimating the "zero" parts of the regression function).  \citet{reiss:etal:2010} examine a B-spline approach for scalar predictor/functional response models, while \citet{ReNi:2014} explore more direct least squares (of a functional norm) estimators for scalar predictor/functional response models.  However, all of these methods assume a fixed number of covariates.  
Related work which incorporates an increasing number of predictors mainly comes from the literature on varying coefficient models, for example \citet{wang:etal:2008}, \citet{zhao:xue:2010} and \citet{wei:huang:li:2011}.  The only related work in the FDA literature concerns scalar-on-function regression, the reverse of our focus.  
\textcolor{black}{In particular, \citet{MaKo:2011} combine basis expansions with a SCAD style penalty, but give no theoretical guarantees.   \citet{Lian:2013} also uses a SCAD penalty, but combined with FCPA instead of a general basis function.  Theory is provided, but only in the case where the number of predictors is fixed.  \citet{gertheiss:maity:staicu:2013} developed a procedure for a generalized scalar-on-function linear model again using basis expansions, but combined with a LASSO style penalty which simultaneously shrinks and smooths parameter estimates; no theoretical guarantees are given.  \citet{fan2014functional,fan:james:rad:2015} consider functional additive regression models with a large number of predictors.  Their methods allow for nonlinear relationships and theoretical properties are established.  Lastly, \cite{kong:2016partially} consider the problem of variable selection with a scalar outcome and scalar predictors, but with a small set of functional predictors that must be accounted for as well.  We emphasize that all of these methods are for the scalar-on-function case; for the function-on-scalar case the only work we are aware of is by \citet{chen:goldsmith:ogden:2016} who consider a basis expansion approach with a MCP style penalty and fixed number of covariates, but their method cannot be applied to settings where $I \gg N$. }

\paragraph{\color{black} Outline} 
The remainder of the paper is organized as follows.  In Section \ref{s:sparse} we provide a framework for scalar predictor/functional response regression with sparsely observed outcomes.  We combine basis expansions with a group LASSO style penalty to carry out variable selection and parameter estimation simultaneously.  In Section \ref{s:func} we present the high dense setting.  We provide a very general framework where the response functions are allowed to take values from a separable Hilbert space.  This allows the functions to be from say $L^2(\mcT)$, which is commonly used, as well as more general spaces such as Sobelev spaces or product spaces of function spaces (i.e. vectors of functions).  In Section \ref{s:comp} we provide computational details, including a pre-screening rule which allows one to reduce $I$ substantially before fitting the FS-LASSO, and details for making use of established computational machinery for the group LASSO. We present a simulation study in Section \ref{s:sims} where we explore our procedure in terms of variable selection, parameter estimation, and computational time.  In Section \ref{s:FHS} we apply our methods to the Framingham Heart Study (FHS), to identify genetic markers that influence blood pressure, in hopes of gaining a better understanding of cardiovascular disease.  Genome--wide SNP data and phenotype information were downloaded from dbGaP (http://www.ncbi.nlm.nih.gov/gap) study accession phs000007.v25.p9.
Concluding remarks are given in Section \ref{s:conc}, while all theoretical results are proven in the Appendix.

\section{The Sparse Setting}\label{s:sparse}
Sparse FDA occurs quite often in longitudinal studies where one has a relatively small number of observed time points per subject or unit.  In this case, it is common to work with the raw measurements as opposed to the functional embeddings discussed in Section \ref{s:func}.  A brief introduction to sparse FDA can be found in \citet{muller:2008}.  

We begin with the underlying model.  
\begin{assumption} \label{a:sparse}
Assume, for $ 1 \leq n \leq N$ and $1 \leq m \leq M_n \leq M < \infty$, that
\[
Y_{nm} = \sum_{i=1}^I X_{ni} \beta_i^\star(t_{nm}) + \eg_n(t_{nm}), \ \ \ t_{nm} \in \mcT .
\]
The error processes $\{\eg_n(t): t\in \mcT\}$ are iid Gaussian and we define the 
 $M_n \times M_n$ covariance matrix
$\Sigma_n: = \Cov\big((\eg_n(t_{n1}), \dots, \eg_n(t_{nM_n}))\big)$.  The design matrix, $\bX$, is deterministic.
\end{assumption}
\textcolor{black}{Throughout, we will use a $\star$ to denote true parameter values.}   For the moment, we do not include assumptions on $\{t_{nm}\}$ or coefficient functions $\beta_i^\star$.  Note that the Gaussian assumption is not crucial---what is really needed is that the processes have subgaussian tails, though we make the Gaussian assumption to simplify the arguments.  

Let $\{e_j(\cdot)\}$ be a basis in $L_2(\mcT)$.  Approximating the functions $\beta_i^\star$ using this basis, we have that
\[
Y_{nm} = \sum_{i=1}^I \sum_{j=1}^J X_{ni} B_{ij}^\star e_j(t_{nm}) + T_{nm} + \eg_n(t_{nm}),
\]
where $T_{nm}$ is the truncation error obtained after cutting off the basis expansion at $J$.  The following notation will be used repeatedly throughout this section and in the Appendix:
 $$T = (T_{11}, T_{12}, \dots, T_{NM_N})^\top \in \mbR^{\sum_n M_n},$$
$$E_{nm} = (e_1(t_{nm}),\dots,e_J(t_{nm}))^\top \in \mbR^{J},$$ 
$$\bE_n = \left(E_{n1}, \dots E_{nM_n}  \right)^\top \in \mbR^{M_n \times J},$$
$$ \bF = \frac{1}{N} \sum_{n=1}^N\bE_n^\top \bE_n \in \mbR^{J \times J}.$$
For a matrix $\bB \in \mbR^{I\times J}$, define the \textit{sparse target function} as
\[L(\bB)=\frac{1}{2}\sum_{n=1}^N\sum_{m=1}^{M_n}\left(Y_{nm}-\sum_{i=1}^I\sum_{j=1}^JX_{ni}B_{ij}e_j(t_{nm})\right)^2 +\lambda\norm{\bB}_{\ell_1/\ell_2}\;,\]
where $\norm{\bB}_{\ell_1/\ell_2}=\sum_i \norm{B_{i*}}_2$ promotes row-wise sparsity (here $B_{i*}$ denotes the $i$th row of the matrix $\bB$).  The estimate $\widehat \bB$ is the minimizer of the above expression.  The estimated coefficient functions are then given by $\widehat \beta_i(\cdot)=\sum_{j=1}^J \widehat B_{ij}e_j(\cdot)$.
The target function can be rephrased so that traditional group LASSO machinery can be invoked.  Notice that
\[
\sum_{i=1}^I\sum_{j=1}^JX_{ni}B_{ij}e_j(t_{nm}) = X_n^\top \bB E_{nm}
= E_{nm}^\top \bB^\top  X_n
 = (X_n^{\top} \otimes E_{nm}^\top ) \vc(\bB^\top).
\]
 One can then stack the $(X_n^{\top} \otimes E_{nm}^\top ) $ vectors into a matrix, $\bA$,
\[
\bA = \left(\begin{matrix}
 X_1^{\top} \otimes E_{11}^\top \\
X_1^{\top} \otimes E_{12}^\top  \\
\vdots\\
X_N^{\top} \otimes E_{NM_N}^\top  
\end{matrix} \right) = 
\left(\begin{matrix}
 X_1^{\top} \otimes \bE_{1} \\
X_2^{\top} \otimes \bE_{2}  \\
\vdots\\
X_N^{\top} \otimes \bE_{N}
\end{matrix} \right)\in\mbR^{(\sum_n M_n) \times IJ}.
\]
The target function can now be expressed as
\begin{equation}\label{eqn:grouplasso}
L(\bB) = \frac{1}{2} \| Y - \bA \vc(\bB^\top)\|_2^2 + \lambda \| \bB\|_{\ell_1/\ell_2},
\end{equation}
where
\[
Y = (Y_{11}, \dots, Y_{1M_1},\dots,Y_{N,M_N})^\top.
\]
Group LASSO computational tools can then be used to find $\widehat \bB$. % which minimizes $L(\bB)$. 
%Define the sparse FLASSO estimate as \[\widehat{\bB}=\arg\min_\bB L(\bB) \;,\] and the true parameter matrix as $\bB^\star$.

%Before proceeding to our results on this optimization problem, we first consider the inherent identifiability issues. 
Since the matrix $\bA$ will generally have more columns than rows, the linear system $Y\approx \bA\vc(\bB^\top)$ is underdetermined; in particular the (right) null space of $\bA$ is large. However, the grouped sparsity structure in $\bB$ allows us to resolve this difficulty. We first recall the notion of restricted eigenvalues \cite{bickel2009simultaneous}, used for sparse regression (without group structure): 
\begin{definition}[Restricted eigenvalue condition]\label{def:RE}
A matrix $\bA\in\mathbb{R}^{N\times I}$ satisfies the $\mathsf{RE}(I_0,\alpha)$ condition if, for all subsets $S\subset \{1,\dots, I\}$ with $|S|\leq I_0$,
\[\norm{\bA w}^2_2\geq \alpha N\norm{w}^2_2 \text{ for all }w\in\mathbb{R}^I\text{ with }\norm{w_{S^c}}_{\ell_1}\leq 3\norm{w_S}_{\ell_1}\;.\]
\end{definition}
\noindent The constant $3$ here is somewhat arbitrary, but it is standard in the literature and we use it here for convenience.
\citet{lounici2011oracle} extend this definition to the group-sparse setting:
\begin{definition}[Grouped restricted eigenvalue condition]\label{def:RE_group}
A matrix $\bA\in\mathbb{R}^{N\times IJ}$ satisfies the $\mathsf{RE}_{\mathsf{group}}(I_0,\alpha)$ condition if, for all subsets $S\subset \{1,\dots,I\}$ with $|S|\leq I_0$,
\[\norm{\bA  \vc(\bW^\top)}^2_2 \geq \alpha N\norm{\bW}^2_{\mathsf{F}}\text{ for all }\bW\in\mathbb{R}^{I\times J} \text{ with }\norm{\bW_{\!S^c}}_{\ell_1/\ell_2}\leq 3\norm{\bW_{\!S}}_{\ell_1/\ell_2}\;.\]
\end{definition}
\noindent Here $\bW_{\!S}$ denotes the submatrix of $\bW$ obtained by extracting the rows indexed by $S$, while $\bW_{\!S^c}$ contains
only the rows in $S^c$, the complement of $S$.

\subsection{Bounding the error}
%{[\color{red} Split the two theorems into subsections]}

With these definitions in place, our first result is a modified version of \citet{lounici2011oracle}'s analysis of the group LASSO. It proves resulting bounds on the error $\widehat{\bB}-\bB^\star$, where $\bB^\star$ is the (truncated) true parameter matrix, as long as $\lambda$ is sufficiently large and $\bA$ satisfies the grouped restricted eigenvalue condition.
\begin{theorem}\label{thm:main_sparse}
Suppose that Assumption \ref{a:sparse} holds and that for some $\delta \in (0,1)$
\begin{equation}\label{eqn:lambda_bound}
%\lambda\geq 2 \max_i\norm{A_i^\top (Y- \bA \vc(\bB^\star))}_2
\lambda \geq 2\sqrt{N\norm{\bF}_\mathsf{op}}\left(\norm{T}_2 + \sqrt{\max_n\norm{\Sigma_n}_\mathsf{op}\left(2J+3\log(2I/\delta)\right)}\right)\;.
\end{equation}
Assume also that $\bB^\star$ has at most $I_0$ nonzero rows, and that $\bA$ satisfies the $\mathsf{RE}_{\mathsf{group}}(I_0,\alpha)$ condition for some $\alpha>0$.
Then with probability at least $1 - \delta$, any minimizer $\widehat{\bB}$ of \eqref{eqn:grouplasso}  satisfies
\[\norm{\widehat{\bB}-\bB^\star}_{\mathsf{F}}\leq \frac{3\lambda\sqrt{I_0}}{\alpha N}\text{ and }\norm{\widehat{\bB}-\bB^\star}_{\ell_1/\ell_2}\leq \frac{12\lambda I_0}{\alpha N}\;.\]
\end{theorem}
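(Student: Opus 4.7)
The plan is to follow the standard LASSO/group-LASSO template due to Bickel--Ritov--Tsybakov and its grouped extension in \citet{lounici2011oracle}, but with an extra nuisance term coming from the basis truncation error $T$. Throughout I will let $\bDel = \widehat{\bB} - \bB^\star$, decompose $Y - \bA\vc(\bB^{\star\top}) = T + E$ where $E\in\mbR^{\sum_n M_n}$ denotes the stacked noise vector with $E_{nm} = \eg_n(t_{nm})$, and let $S\subset\{1,\dots,I\}$ be the support of $\bB^\star$, with $|S|\leq I_0$.

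First I would invoke the basic inequality $L(\widehat{\bB})\leq L(\bB^\star)$, expand the squared loss, and rearrange to obtain
\[
\tfrac{1}{2}\norm{\bA\vc(\bDel^\top)}_2^2 \leq \lip T+E,\,\bA\vc(\bDel^\top)\rip + \lambda\bigl(\norm{\bB^\star}_{\ell_1/\ell_2} - \norm{\widehat{\bB}}_{\ell_1/\ell_2}\bigr).
\]
The inner product is then controlled by duality between $\ell_1/\ell_2$ and $\ell_\infty/\ell_2$: if we write $\bA^\top(T+E) \in \mbR^{IJ}$ as the concatenation of $I$ blocks $g_i\in\mbR^J$ (the block for predictor $i$ is $g_i = \sum_{n,m} X_{ni}(T_{nm}+\eg_n(t_{nm}))E_{nm}$), then $|\lip T+E,\bA\vc(\bDel^\top)\rip| \leq (\max_i\norm{g_i}_2)\cdot\norm{\bDel}_{\ell_1/\ell_2}$.

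The heart of the proof is to show that, under \eqref{eqn:lambda_bound}, the event $\mcH := \{\max_i\norm{g_i}_2 \leq \lambda/2\}$ holds with probability at least $1-\delta$. I would split $g_i = \bA_i^\top T + \bA_i^\top E$ where $\bA_i \in \mbR^{(\sum_n M_n)\times J}$ is the submatrix of $\bA$ for predictor $i$, and note $\bA_i^\top \bA_i = \sum_n X_{ni}^2 \bE_n^\top\bE_n \preceq N\bF$ in PSD order (using that $X_{ni}\in\{0,1,2\}$ is bounded, which is the implicit normalization linking $\bA_i$ to $\bF$). Consequently $\norm{\bA_i}_{\mathsf{op}}\leq\sqrt{N\norm{\bF}_{\mathsf{op}}}$, giving the deterministic truncation bound $\norm{\bA_i^\top T}_2\leq\sqrt{N\norm{\bF}_{\mathsf{op}}}\,\norm{T}_2$. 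For the noise piece, $\bA_i^\top E$ is a Gaussian vector in $\mbR^J$ whose covariance $\sum_n X_{ni}^2\bE_n^\top\Sigma_n\bE_n$ has operator norm at most $(\max_n\norm{\Sigma_n}_{\mathsf{op}})\cdot N\norm{\bF}_{\mathsf{op}}$ and trace at most $J$ times the same quantity; a Laurent--Massart style $\chi^2$-tail bound gives $\norm{\bA_i^\top E}_2 \leq \sqrt{N\norm{\bF}_{\mathsf{op}}\cdot\max_n\norm{\Sigma_n}_{\mathsf{op}}(2J+3\log(2I/\delta))}$ with probability $\geq 1-\delta/I$ for each fixed $i$, and a union bound over $i=1,\dots,I$ closes out the event $\mcH$. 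This concentration step, matching the precise constants $2J+3\log(2I/\delta)$ in \eqref{eqn:lambda_bound}, is the main technical obstacle.

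Having established $\mcH$, the remainder is standard. The rearranged basic inequality yields $\tfrac12\norm{\bA\vc(\bDel^\top)}_2^2 \leq \tfrac\lambda 2\norm{\bDel}_{\ell_1/\ell_2} + \lambda(\norm{\bB^\star}_{\ell_1/\ell_2}-\norm{\widehat{\bB}}_{\ell_1/\ell_2})$. Because $\bB^\star$ is supported on $S$, the triangle inequality $\norm{\bB^\star}_{\ell_1/\ell_2}-\norm{\widehat{\bB}}_{\ell_1/\ell_2} \leq \norm{\bDel_{\!S}}_{\ell_1/\ell_2}-\norm{\bDel_{\!S^c}}_{\ell_1/\ell_2}$ leads, after dropping the (nonnegative) left side, to the grouped cone inequality $\norm{\bDel_{\!S^c}}_{\ell_1/\ell_2}\leq 3\norm{\bDel_{\!S}}_{\ell_1/\ell_2}$, activating Definition~\ref{def:RE_group}. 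Applying the grouped RE lower bound $\norm{\bA\vc(\bDel^\top)}_2^2 \geq \alpha N\norm{\bDel}_{\mathsf{F}}^2$ together with $\norm{\bDel_{\!S}}_{\ell_1/\ell_2}\leq\sqrt{I_0}\norm{\bDel}_{\mathsf{F}}$ (Cauchy--Schwarz over the $\leq I_0$ active rows), and then inserting this back into the basic inequality, gives the Frobenius bound $\norm{\bDel}_{\mathsf{F}}\leq 3\lambda\sqrt{I_0}/(\alpha N)$. The $\ell_1/\ell_2$ bound follows immediately from the cone inequality: $\norm{\bDel}_{\ell_1/\ell_2}\leq 4\norm{\bDel_{\!S}}_{\ell_1/\ell_2}\leq 4\sqrt{I_0}\norm{\bDel}_{\mathsf{F}}\leq 12\lambda I_0/(\alpha N)$.
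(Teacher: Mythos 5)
Your proposal is correct and follows essentially the same route as the paper's proof: the basic inequality with the residual decomposed as truncation plus noise, the block-dual bound $\max_i\norm{\bA_i^\top(T+\mathbf{\epsilon})}_2\le\lambda/2$ established via $\bA_i^\top\bA_i\preceq N\bF$ for the deterministic part and a Gaussian quadratic-form tail bound (the paper uses Hsu--Kakade--Zhang, equivalent to your Laurent--Massart step) with a union bound for the noise part, followed by the cone condition, the grouped RE condition, and Cauchy--Schwarz over the $I_0$ active rows to obtain the stated constants.
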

 \edits{\begin{remark}
 Results for sparse regression in a non-grouped setting commonly give error bounds in both an $\ell_2$ norm and an $\ell_1$ norm; the former typically gives a more favorable scaling of sample size with respect to sparsity, while the latter ensures that for a larger sample size the error itself is nearly sparse, i.e.~does not have a ``long tail'' of small errors on many coordinates. Similarly, here we give results in both the Frobenius norm (with more favorable sample size scaling) and the $\ell_1/\ell_2$ norm.
 \end{remark}
\begin{remark}
As is standard in the LASSO literature, the consistency results given in this theorem can yield a selection consistency result as well: if we assume that $\min_{i\in I_0}\norm{\bB^\star_i}_2>\frac{6\lambda\sqrt{I_0}}{\alpha N}$, then the bound on $\norm{\widehat{\bB}-\bB^\star}_{\mathsf{F}}$ implies that the following estimated support,
{\color{black}
\[\widehat S = \left\{i\in\{1,\dots, I\} : \norm{\widehat\bB_{i_*}}_2>\frac{3\lambda\sqrt{I_0}}{\alpha N}\right\}\;,\]
is correct (i.e.~$\widehat S$ is equal to the row support of $\bB^\star$) with high probability.}
\end{remark}}

As is widely appreciated by researchers working on such results, Theorem \ref{thm:main_sparse} essentially hinges on concentration inequalities and restricted eigenvalue type conditions.  Neither of these tools can be readily applied to our setting.  Thus, much of our theoretical work is focussed on showing how to extend these ideas to the functional setting.  Below we discuss the restricted eigenvalue condition at length, while discussion of the concentration inequalities involved can be found at the end of Appendix \ref{a:proof_sparse}.
Concentration inequalities allow us to control the stochastic error in the model and largely dictate the rates of convergence, while the restricted eigenvalue conditions allow us to bound the error $\norm{\widehat{\bB}-\bB^\star}_{\mathsf{F}}$ from above by a term involving $\| \bA \vc\big((\widehat{\bB}-\bB^\star)^\top\big)\|_{2}$, and therefore to obtain convergence rates for the parameters of interest. 

\textcolor{black}{Note that the phrasing of Theorem \ref{thm:main_sparse} allows for the possibility of multiple minimizers of \eqref{eqn:grouplasso}, guaranteeing only that any minimizer must achieve the specified converge rates.  However, in practice it rare is for LASSO style estimates to not be unique.  We direct the interested reader to \citet{tibshirani2013lasso} for a thorough discussion of this issue.} \\

\noindent{\bf Example:} %{[\color{red} Moved this up to be with Thm 1 rather than after Thm 2]}
We next give an example to illustrate the type of scaling that is obtained in Theorem~\ref{thm:main_sparse}. 
Let $W^{\tau,2}[0,1]$ denote the Sobelev space consisting of functions over $[0,1]$, which have square integrable derivatives of up to order $\tau$.  Suppose that the coefficient functions, $\beta_i$, take values in a finite ball in $W^{\tau,2}$.  Let the basis $e_1(\cdot), e_2(\cdot), \dots$ denote the Fourier basis and assume that the time points $t_{nm}$ are iid $U[0,1]$.  Examining $\lambda$, there are now a number to terms which can be made more explicit.  First, since the basis is orthogonal we have $\bF  \approx \bI_{J \times J}$, so that $\| \bF \|_{\mathsf{op}} $ is behaves like a constant.  Next, examining the truncation term $T$, there are $\sum_n M_n \sim N$ coordinates, and each coordinate consists of a sum of $I_0$ functions each of which lies in a Sobelev ball, thus  we have that $\| T\| \approx  \sqrt {N I_0}  J^{-\tau}$.   Examining the second term in $\lambda$, we have that
\[
\sqrt{ \max_n\norm{\Sigma_n}_{\mathsf{op}}\left(2J+3\log(2I/\delta)\right)}
\sim \sqrt{J+\log(I)} \leq \sqrt{J \log(I)}. 
\]
We therefore want to choose $J$ to balance the two errors, which means taking $J$ such that
\[
\sqrt {N I_0}  J^{-\tau} = \sqrt{J \log(I)} \Longrightarrow J = \left( \frac{N I_0}{\log(I)} \right)^{\frac{1}{1+2\tau}}.
\]
Plugging this into our expression for the convergence rates,
and assuming that the restricted eigenvalue condition $\mathsf{RE}_{\mathsf{group}}(I_0,\alpha)$  holds for the matrix $\bA$ with some  $\alpha>0$ that we treat as a constant,
 we have that
\begin{align*}
 \| \Delta\|_{\mathsf{F}} & = O_P(1) \cdot \frac{N^{1/2}  \sqrt{J \log(I)} I_0^{1/2}}{N} 
  % \\ & =   J^{1/2} I_0  \left( \frac{ \log(I)}{ N I_0 } \right) ^{1/2}\\
%& = I_0 \left( \frac{ \log(I)}{ N I_0} \right) ^{1/2 - (1/2) (1+2 \beta)^{-1}} \\
 = O_P(1)\cdot  I_0 \left( \frac{ \log(I)}{ N I_0} \right) ^{\frac{\tau}{1+2 \tau}}, 
\end{align*}
where $\Delta:= \bB - \widehat \bB$.
We see that the standard nonparametric rate of convergence appears which relates the convergence rates to the smoothness of the underlying parameter functions.  This rate also applies to the difference $\|\widehat \beta - \beta\|_{L^2}$.  Notice that by Parceval's identity and the Triangle inequality
\[
\|\widehat \beta - \beta\|_{L^2} = \left( \sum_{i=1}^I \int (\widehat \beta_i(t) - \beta_i(t)^2 ) \ dt  \right)^{1/2}
\leq \|\Delta\|_{\mathsf{F}} + \|\Pi_J^\perp \beta\|_{L^2},
\] 
where $\Pi_J^\perp$ is  the projection onto the remaining basis functions $e_{J+1}(\cdot),\dots$.  By the assumed smoothness of the $\beta$, we have that
\[
\|\Pi_J^\perp \beta\|_{L^2} \sim \sqrt{I_0}J^{-\tau}
=  \sqrt{I_0}
 \left( \frac{N I_0}{\log(I)} \right)^{\frac{-\tau}{1+2\tau}} 
 \leq I_0 \left( \frac{ \log(I)}{ N I_0} \right) ^{\frac{\tau}{1+2 \tau}},
\]
Thus the rate for $\|\widehat \beta - \beta\|_{L^2}$ is the same as for $\| \Delta\|_{\mathsf{F}}$.

For the $\ell_1/\ell_2$ norm we then have
\[
\| \Delta\|_{\ell_1/\ell_2} = O_P(1)\cdot \frac{\lambda I_0}{N} = O_P(1)\cdot I_0^{3/2} \left( \frac{ \log(I)}{ N I_0} \right) ^{\frac{\tau}{1+2 \tau}}.
\]
By the same arguments as before, the above rate also applies to the sum of the normed differences between the functions, i.e. $\| \widehat \beta - \beta\|_{\ell_1}$.  As we will see, when $\tau \to \infty$, the rate above converges to the rate found for the dense setting.

\subsection{Restricted eigenvalue condition}

Our next result concerns the restricted eigenvalue condition. It proves that, if the covariates $X_n$ are drawn from a Gaussian or subgaussian distribution with well-conditioned covariance structure, then the grouped restricted eigenvalue condition will hold for $\bA$.
\begin{theorem}\label{t:RE_gaussian}
\edits{Suppose that $X_n \overset{iid}{\sim} N(0,\Sigma)$ with $\norm{\Sigma}\leq \nu^2$, or more generally, the $X_n$'s are iid %{\color{red} do we prefer iid or i.i.d.? changed to iid throughout for consistency but happy with either} 
where each $X_n$ has mean $0$, covariance $\Sigma$, and is $\nu$-subgaussian meaning that $\mathbb{E}[e^{v^\top X_n}]\leq e^{\nu^2\norm{v}^2_2/2}$ for any fixed vector $v$. Suppose that $\lambda_{\min}(\Sigma)>0$. Consider any fixed sequence of matrices $\bE_1 \in \mbR^{ M_1 \times J}$, \dots, $\bE_N\in \mbR^{ M_N \times J}$ satisfying
}
\begin{equation}\label{e:Econd}
\min_{w\in\bR^J\backslash\{0\}}\frac{\sum_n \norm{\bE_n w}_2}{N\norm{w}_2}\geq \gamma_0>0, \quad \max_{w\in\bR^J\backslash\{0\}}\sqrt{\frac{\sum_n \norm{\bE_n w}^2_2}{N\norm{w}^2_2}}\leq \gamma_1\;,
\end{equation}
  and define $\bA\in \mbR^{\sum_n M_n \times IJ}$ as before. 
  %Fix any $\delta>0$. 
  %Dropped statement about $\delta$, it seem irrelevant
Then there exist  $c_0,c_1,c_2>0$ depending only on $\nu,\lambda_{\min}(\Sigma),\gamma_0,\gamma_1$, such that, if 
\begin{equation}\label{eqn:N_RIP_thm}N \geq c_0\cdot  k \cdot \log(IJ)\cdot (J+\log(I))\;,\end{equation}
then with probability at least $1- e^{-c_1N}$, $\bA$ satisfies the $\mathsf{RE}_{\mathsf{group}}(k,c_2)$ condition.
\end{theorem}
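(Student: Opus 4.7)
The plan is the two-step strategy standard for random-design restricted eigenvalue results: first lower bound the expected quadratic form on the group-sparse cone, then concentrate the empirical form around its mean uniformly over the cone. Unpacking the Kronecker structure, we have
\[
\|\bA\vc(\bW^\top)\|_2^2 \;=\; \sum_{n=1}^N \|\bE_n \bW^\top X_n\|_2^2, \qquad \E\|\bA\vc(\bW^\top)\|_2^2 \;=\; N\,\trace\!\left(\bF\, \bW^\top \Sigma \bW\right),
\]
where $\bF = N^{-1}\sum_n \bE_n^\top \bE_n$. A Cauchy--Schwarz step applied to the first inequality in \eqref{e:Econd} gives $\lambda_{\min}(\bF) \geq \gamma_0^2$, so combined with $\lambda_{\min}(\Sigma)>0$ one obtains the deterministic population bound $\E\|\bA\vc(\bW^\top)\|_2^2 \geq N\gamma_0^2\lambda_{\min}(\Sigma)\|\bW\|_F^2$.

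For the empirical part I would decompose $\|\bA\vc(\bW^\top)\|_2^2 = T_1 + T_2$ with
\[
T_1 = \sum_n X_n^\top\bW\bF\bW^\top X_n, \qquad T_2 = \sum_n X_n^\top\bW(\bM_n-\bF)\bW^\top X_n, \qquad \bM_n := \bE_n^\top\bE_n.
\]
Since $T_1 = \|\bX\,\bW\bF^{1/2}\|_F^2$ (with $\bX$ the $N\times I$ subgaussian design), the reparametrization $\widetilde\bW := \bW\bF^{1/2}$ reduces $T_1$ to a standard group-LASSO design quadratic form with groups of size $J$; since $\gamma_0^2 \leq \lambda_{\min}(\bF)\leq \lambda_{\max}(\bF)\leq \gamma_1^2$, $\widetilde\bW$ inherits the row sparsity of $\bW$ and lives in a group-sparse cone with constant only worsened by a factor $\gamma_1/\gamma_0$, so the subgaussian grouped-RE bound of \citet{lounici2011oracle} yields $T_1 \gtrsim N\gamma_0^2\lambda_{\min}(\Sigma)\|\bW\|_F^2$ uniformly on the cone with failure probability $e^{-cN}$ once $N \gtrsim k(J+\log I)$. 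The mean-zero fluctuation $T_2$ I would bound uniformly via a Hanson--Wright/Bernstein inequality for subgaussian chaos at each point of an $\epsilon$-net of the cone, lifting to the whole cone by linearity of $\bA$. For each of the $\binom{I}{k}\leq (eI/k)^k$ supports $S$, the on-support rows live in a $kJ$-dimensional ball admitting an $\epsilon$-net of size $(3/\epsilon)^{kJ}$, and a Maurey-type sparse-approximation argument handles the off-support $\ell_1/\ell_2$ tail at an extra $\log(IJ)$ cost; a union bound then yields total log-failure probability of order $-cN + c'k\log(IJ)(J+\log I)$, matching \eqref{eqn:N_RIP_thm} and giving $c_2 = \tfrac12\gamma_0^2\lambda_{\min}(\Sigma)$.

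The hardest step, I expect, is the control of $T_2$: the hypotheses in \eqref{e:Econd} bound the $\bE_n$'s only on average in $n$, so the naive operator-norm input $\max_n\|\bW(\bM_n-\bF)\bW^\top\|_{\mathsf{op}}$ to Hanson--Wright could be as large as $\sqrt{N}$ times the average and would destroy the subgaussian tail. I would handle this by stratifying the sum over $n$ according to $\|\bE_n\|_{\mathsf{op}}$ in a peeling argument, using the second inequality in \eqref{e:Econd} to bound the count and variance of each stratum and applying Bernstein stratum by stratum, so that the few exceptionally large-norm $\bE_n$'s are absorbed into the $\log(IJ)$ factor that \eqref{eqn:N_RIP_thm} already concedes in the sample-size requirement.
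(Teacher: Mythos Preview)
Your approach is genuinely different from the paper's and has a real gap in the $T_2$ step. The paper does not decompose into $T_1+T_2$ at all. Instead it applies Oliveira's Transfer Principle (in a group-sparse form) to reduce $\mathsf{RE}_{\mathsf{group}}(k,\cdot)$ to an \emph{exact}-sparsity eigenvalue lower bound at level $k_1\asymp k\log(IJ)$ (this is where the extra $\log(IJ)$ in \eqref{eqn:N_RIP_thm} enters) together with an easy bound on $\max_i\|\bA_i\|_{\mathsf{op}}$. For the exact-sparsity bound, the paper switches to the \emph{non-squared} sum $\sum_n\|\bE_n\bW^\top X_n\|_2$: each term is a Lipschitz function of $X_n$, hence sub-Gaussian rather than sub-exponential, and a small-ball argument shows that this sum is $\asymp N\sqrt{\trace(\bW^\top\Sigma\bW)}$ with probability $1-e^{-cN}$ whenever $\sum_n\|\bE_n\bW^\top\Sigma^{1/2}\|_F\gtrsim N$ and $(\sum_n\|\bE_n\bW^\top\Sigma^{1/2}\|_F^2)^{1/2}\lesssim\sqrt N$, which is exactly what the two halves of \eqref{e:Econd} deliver. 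Cauchy--Schwarz converts this to the squared bound, and a covering net over $k_1$-row-sparse $\bW$ makes it uniform.

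The peeling fix you propose for $T_2$ does not close the gap. The second inequality in \eqref{e:Econd} bounds only $\lambda_{\max}(\bF)$; it gives no control on $\max_n\|\bM_n\|_{\mathsf{op}}$ or on stratum cardinalities, since the top eigendirections of different $\bM_n$ need not align. Concretely, take $\bM_n=I_J$ for $N/2$ indices, $\bM_n=\tfrac14\sqrt N\,I_J$ for $\sqrt N$ indices, and $\bM_n=0$ for the remainder: both halves of \eqref{e:Econd} hold with fixed $\gamma_0=\tfrac12$, $\gamma_1=1$, yet $\max_n\|\bM_n-\bF\|_{\mathsf{op}}\asymp\sqrt N$. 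The sub-exponential term in Bernstein for the big stratum then yields a tail of at best $e^{-c\sqrt N}$, so after union-bounding over a net of size $\exp\bigl(c'k(J+\log I)\bigr)$ you would need $N\gtrsim k^2(J+\log I)^2$, which the $\log(IJ)$ slack in \eqref{eqn:N_RIP_thm} does not cover. The paper's non-squared route sidesteps this entirely: its small-ball lemma only needs a \emph{constant fraction} of the $\bE_n$ to be well-behaved (guaranteed by a Paley--Zygmund-type count from both halves of \eqref{e:Econd}), so the few large $\bE_n$ are simply discarded rather than controlled.
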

\edits{
\begin{remark}
This theorem is a primary distinction between our work discussed thus far and that of \citet{wei:huang:li:2011}. Their work cites existing results which imply that the matrix $\bA$ behaves appropriately for each approximately-row-sparse $\bW$  in expectation, that is, $\mathbb{E}\big[\norm{\bA  \vc(\bW^\top)}^2_2\big]$ is lower-bounded for each $\bW$. In contrast, our theorem above proves the much stronger statement that $\norm{\bA  \vc(\bW^\top)}^2_2$ can be lower-bounded  simultaneously with high probability for all approximately-row-sparse $\bW$, which is necessary for estimation in high dimensions.\end{remark}}

\edits{\begin{remark}
The conditions~\eqref{e:Econd} on the basis matrices $\bE_n$ are of course dependent on the choice of basis; this type of basis-dependent
assumption is standard in the nonparametric literature. As a simple example of a choice of basis where~\eqref{e:Econd} is satisfied, suppose that there are $J$ evenly spaced time points and our basis is given by $J$ orthonormal functions (e.g.~a Fourier basis). Suppose measurements for each individual are taken at a random subset of $J'$ time points from the original $J$. Then each $\mathbf{E}_n$ is a $J'\times J$ submatrix chosen at random from some larger $J\times J$ orthogonal matrix (corresponding to e.g.~the Fourier basis), in which case $\gamma_1=1$ and $\gamma_0\sim \sqrt{J'/J}$.
\end{remark}}

  The assumption given in \eqref{e:Econd} is a condition number type constraint.  The upper bound is simply a bound on the largest eigenvalue of $\bF = \frac{1}{N}\sum_{n} \bE_n^\top \bE_n$, while the lower bound is a bit stronger than a corresponding bound on the smallest eigenvalue.  It is, in essence, saying that the norms $\|\bE_n w\|_2^2$ should be on the order of $\norm{w}_2^2$ for many of the indices $n$. (In contrast, upper and lower eigenvalue conditions would only ensure that $\|\bE_nw\|_2^2$ is on the order of $\norm{w}_2^2$ {\em on average}, but would not prevent degenerate scenarios such as $\|\bE_1 w\|_2^2= N\norm{w}_2^2$ and $\|\bE_2 w\|_2^2=\dots=\|\bE_N w\|_2^2 =0$.)\\

%Next, we consider the assumptions on $\lambda$ and $\alpha$.
%In the work below, we show that we should set
%\[\lambda = 2\sqrt{N\norm{F}}\left(\norm{T}_2 + \sqrt{\max_n\norm{\Sigma_n}\left(2J+3\log(2I/\delta)\right)}\right)\]
%where $T$ is a truncation error and $\Sigma_n$ is the covariance of the noise for observations of the $n$th subject.
%We show also that $\alpha$ can be taken to be a positive constant, as long as
%\[N \gtrsim |S|\cdot (J+\log(I))\;,\]
%under some distributional assumptions on $X$ and some eigenvalue-type conditions on the collection of basis function values $\{e_j(t_{nm})\}_{j,n,m}$.

\section{The Dense Setting}\label{s:func}
When the underlying functions are observed at a relatively large number of time points, a different approach than the one described in Section \ref{s:sparse} is commonly employed.  In particular, for each subject $n\in\{1,\dots,N\}$, the observations $Y_{nm}=Y_n(t_{nm})$  are embedded into a function space, which are then treated as though they were fully observed functions.  Implicitly, one is assuming that the error from the embedding is negligible compared to other sources of variability.   More details and background can be found in \citet{hkbook}.   We begin by defining the underlying model.

\begin{assumption} \label{a:f:model}
Let $Y_1,\dots,Y_N$ be independent random elements of a real separable Hilbert space, $\mcH$, satisfying the functional linear model
\[
Y_n  = \sum_{i=1}^I X_{ni} \beta_i + \eg_n.
\]
 Assume the $N \times I$ design matrix $\bX = \{X_{ni}\}$ is deterministic and has standardized columns, the $\eg_n$ are iid~square-integrable Gaussian random elements of $\mcH$ with mean 0 and covariance operator $C$, and $\beta_i$ are deterministic elements of $\mcH$.  Let $\Lambda = (\lambda_1,\lambda_2,\dots)$ denote the vector of eigenvalues of $C$.
\end{assumption}
\edits{\begin{remark}
We emphasize that, throughout this section, the responses $Y_n$, coefficients $\beta_i$, and errors $\eg_n$ are all elements of the Hilbert space (e.g.~functions), rather than scalars as in the usual high-dimensional regression setting. Our work in this section can be viewed as a functional analogue of existing high-dimensional sparse signal recovery results for the scalar setting (i.e.~the ordinary LASSO). %{\color{red}[we use lasso / Lasso / LASSO in various places, changed these all to LASSO but happy with any of them]}.
\end{remark}}

The most common choice for the function space $\mcH$ is $L^2(\mcT)$ where $\mcT$ is a closed and bounded interval.  However, the advantage of our phrasing is that we allow for a number of other spaces, including Sobolev spaces (if one wants to better utilize the smoothness of the data), product spaces (if the response is actually a vector of functions), or multidimensional domains (which could arise in areas such as spatial statistics).  Throughout this section, when we write $\| \cdot\|_{\mcH}$ we will always mean the inner product norm on $\mcH$.  
% or the induced norm on a product space of $\mcH$ spaces, e.g. the cartesian product $\mcH^{I}=\mcH\times\dots\times\mcH$.  
Norms which deviate from this inner product norms will include alternative subscripts.

As before, when applying the FS-LASSO, we are assuming that the true underlying model is sparse, with $I_0$ denoting the number of true predictors and $S_0$ denoting the true set of predictors.  The FS-LASSO estimate is then the solution to the following minimization problem
\[
\widehat \beta = \argmin_{\beta \in \mcH^{I}} L_{\mathsf{F}}(\beta),
\]
where
\[
L_{\mathsf{F}}(\beta) = \frac{1}{2} \sum_{n=1}^N \| Y_n - X_n^\top \beta\|_{\mcH}^2 + \lambda \|\beta\|_{\ell_1/\mcH} \text{\quad for\quad } \|\beta\|_{\ell_1/\mcH} = \sum_{i=1}^I \|\beta_i\|_{\mcH} \;.\]
The norm $\| \cdot \|_{\ell_1/\mcH}$ is a type of $\ell_1$ norm on the product space $\mcH^{I}$, which encourages sparsity among the list of functions $\beta_1,\dots,\beta_I$ (that is, the function $\beta_i$ will be uniformly zero for many indices $i$).  The target function $L_{\mathsf{F}}(\cdot)$ is a direct Hilbert space generalization of the LASSO target function.  

Next we introduce a functional restricted eigenvalue assumption, which is a direct analogue of similar assumptions in the LASSO literature (e.g. \citet{bickel2009simultaneous}).
%
\begin{comment}
\begin{assumption}\label{a:f:comp}
For all $x \in \mcH^{I}$ assume there exists a fixed constant $\alpha > 0$ such that 
$$ \| \bX  x  \|_{\mcH}^2   \geq \alpha N  \| x  \|_{\mcH}^2 $$
whenever $ \| x_{S^c}  \|_{\ell_1/\mcH} \leq 3 \| x_{S}  \|_{\ell_1/\mcH}  $ for some $|S|\leq I_0$.
\end{assumption}
\end{comment}
%
\begin{definition} \label{d:f:comp}
We say a matrix $\bA \in \mbR^{N \times I}$ satisfies a functional restricted eigenvalue condition, $\mathsf{RE_F}(I_0, \alpha)$, if for all subsets $S \subset \{1, \dots, I\}$ with $|S| \leq I_0$, we have
\[
\|\bA x\|_{\mcH^N}^2 \geq \alpha N \| x\|_{\mcH^I}^2
 \ \text{ for all } \  x \in \mcH^I 
 \ \text{ that satisfy }\ \| x_{S^c} \|_{\ell_1/ \mcH} \leq 3  \| x_{S} \|_{\ell_1/ \mcH}.
\]
\end{definition}

In fact, this functional restricted eigenvalue assumption is no stronger than the usual (scalar) restricted eigenvalue assumption---in the following theorem we show that any matrix $\bA$ satisfying the usual (scalar) restricted eigenvalue assumption, will also satisfy the functional version given in Definition \ref{d:f:comp}.  \textcolor{black}{We do this by showing that the common inequality used for proving that a matrix satisfies a scalar restricted eigenvalue condition immediately implies the same inequality in the functional setting, and leads to the restricted eigenvalue condition. } %For convenience, however, we use a different form of the restricted eigenvalue condition here that gives a more direct equivalence.
\begin{theorem} \label{t:res}
For a fixed matrix $\bX\in\mathbb{R}^{N\times I}$, suppose that for some $c_1,c_2>0$, $\bX$ satisfies
\[
\| \bX z\|_{\mathsf{2}} \geq c_1 \sqrt{N}\cdot \|z\|_2  - c_2 \sqrt{\log(I)}\cdot \|z\|_{\ell_1},
\]
for all $z \in \mbR^I$.  Then the same inequality holds for all $x \in \mcH^{I}$, that is,
\begin{equation}\label{eqn:func_RE}
\| \bX x\|_{\mcH} \geq c_1  \sqrt{N}\cdot \|x\|_{\mcH}  - c_2  \sqrt{\log(I)}\cdot \|x\|_{\ell_1/\mcH} .
\end{equation}{\color{black}
Furthermore, if $c_1>4c_2\sqrt{\frac{I_0\log(I)}{N}}$, then $\bX$ satisfies the $\mathsf{RE}_{\mathsf{F}}(I_0,\alpha)$ property with 
\[\alpha = \left(c_1 - 4c_2\sqrt{\frac{I_0\log(I)}{N}}\right)^2.\]}
\end{theorem}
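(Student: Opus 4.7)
The plan is to handle the two assertions in sequence: first lift the scalar hypothesis to the functional inequality~\eqref{eqn:func_RE} using a coordinatewise expansion in an orthonormal basis of $\mcH$, and then derive the $\mathsf{RE}_{\mathsf{F}}(I_0,\alpha)$ property from~\eqref{eqn:func_RE} via a Cauchy--Schwarz bound on the $S$-restricted norm.

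For the lift, I would fix a countable orthonormal basis $\{\phi_k\}_{k\geq 1}$ of $\mcH$ (which exists because $\mcH$ is separable) and expand $x_i=\sum_k x_i^{(k)}\phi_k$. Assembling the $k$-th coefficients across $i$ yields a scalar vector $x^{(k)}:=(x_1^{(k)},\dots,x_I^{(k)})^\top\in\mbR^I$, and one checks directly that the $k$-th coefficient of $(\bX x)_n$ equals $(\bX x^{(k)})_n$. Parseval's identity then gives $\|x\|_{\mcH^I}^2=\sum_k\|x^{(k)}\|_2^2$ and $\|\bX x\|_{\mcH^N}^2=\sum_k\|\bX x^{(k)}\|_2^2$. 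Applying the scalar hypothesis to each $x^{(k)}$ and rearranging yields
\[
c_1\sqrt{N}\,\|x^{(k)}\|_2\ \leq\ \|\bX x^{(k)}\|_2+c_2\sqrt{\log(I)}\,\|x^{(k)}\|_1.
\]
Squaring, summing over $k$, taking square roots, and applying Minkowski's inequality in $\ell^2(k)$ produces
\[
c_1\sqrt{N}\,\|x\|_{\mcH^I}\ \leq\ \|\bX x\|_{\mcH^N}+c_2\sqrt{\log(I)}\,\Big(\textstyle\sum_k\|x^{(k)}\|_1^2\Big)^{1/2}.
\]
A second Minkowski step in $\ell^2(k)$, this time applied to $\sum_k\big(\sum_i|x_i^{(k)}|\big)^2$, bounds the remaining factor by $\sum_i\big(\sum_k (x_i^{(k)})^2\big)^{1/2}=\sum_i\|x_i\|_{\mcH}=\|x\|_{\ell_1/\mcH}$, and rearranging delivers~\eqref{eqn:func_RE}.

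For the second assertion, suppose $|S|\leq I_0$ and $\|x_{S^c}\|_{\ell_1/\mcH}\leq 3\|x_S\|_{\ell_1/\mcH}$. Then $\|x\|_{\ell_1/\mcH}\leq 4\|x_S\|_{\ell_1/\mcH}$, and Cauchy--Schwarz on the $|S|$ scalar quantities $\{\|x_i\|_{\mcH}\}_{i\in S}$ gives $\|x_S\|_{\ell_1/\mcH}\leq \sqrt{|S|}\,\|x_S\|_{\mcH^I}\leq \sqrt{I_0}\,\|x\|_{\mcH^I}$. Substituting into~\eqref{eqn:func_RE} gives
\[
\|\bX x\|_{\mcH^N}\ \geq\ \big(c_1\sqrt{N}-4c_2\sqrt{I_0\log(I)}\big)\|x\|_{\mcH^I},
\]
and squaring---legitimate because the right-hand side is nonnegative under the assumption $c_1>4c_2\sqrt{I_0\log(I)/N}$---produces the stated value of $\alpha$.

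The only potential obstacle is bookkeeping: one must check that the infinite-basis manipulations are valid. Since every summand that appears is nonnegative, Tonelli's theorem justifies each interchange of $\sum_i$ and $\sum_k$ (and likewise the identification of $\|\bX x\|_{\mcH^N}^2$ with $\sum_k\|\bX x^{(k)}\|_2^2$), so no analytic subtleties arise beyond two applications of Minkowski's inequality in $\ell^2$.
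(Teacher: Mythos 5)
Your proposal is correct and follows essentially the same route as the paper's proof: expand in an orthonormal basis of $\mcH$, identify $\|\bX x\|_{\mcH}^2=\sum_k\|\bX x^{(k)}\|_2^2$ via Parseval, apply the scalar inequality coordinatewise, and bound $\big(\sum_k\|x^{(k)}\|_1^2\big)^{1/2}\leq\|x\|_{\ell_1/\mcH}$, followed by the identical Cauchy--Schwarz argument for the $\mathsf{RE}_{\mathsf{F}}$ claim. The only (welcome) refinement is that you rearrange the scalar bound so both sides are nonnegative before squaring and invoke Minkowski in $\ell^2(k)$, which sidesteps the sign issue the paper glosses over when it squares the possibly negative lower bound $c_1\sqrt{N}\|x^{(k)}\|_2-c_2\sqrt{\log(I)}\|x^{(k)}\|_1$ directly.
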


We are now ready to present the main result of the section.
\begin{theorem}\label{t:highfreq_main}
%Under Assumption \ref{a:f:model}, the FS-LASSO estimate $\widehat \beta$ exists and is unique with probability 1. 
If Assumption \ref{a:f:model} holds, $\bX$ satifies $\mathsf{RE_F}(I_0, \alpha)$, and
\[
\lambda \geq 2 \sqrt{N}  \sqrt{\| \Lambda\|_1  + 2 \|\Lambda\|_2 \sqrt{\log(I/\delta)}   + 2 \| \Lambda \|_\infty (\log(I/\delta))},
\] 
then with probability at least $1 - \delta$, any minimizer $\widehat \beta$ of $L_F(\beta)$ satifies
\[
\| \bX (\widehat \beta - \beta^\star)\|_{\mcH} \leq \frac{4  \lambda \sqrt{I_0}  }{\sqrt{\alpha N}} %\sim  \sqrt{{I_0 \log(I)}}
\qquad \mbox{and} \qquad
\| \widehat \beta - \beta^\star \|_{\ell_1/\mcH} \leq \frac{16  \lambda I_0  }{\alpha N }. %\sim I_0 \sqrt{ \frac{\log(I)}{N}}.
\]
\end{theorem}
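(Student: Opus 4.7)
The plan is to follow the standard LASSO template, adapted to a Hilbert-space valued response. Writing $\Delta = \widehat\beta - \beta^\star$ and starting from $L_{\mathsf F}(\widehat\beta)\le L_{\mathsf F}(\beta^\star)$, expanding the squared norms using $Y_n = X_n^\top\beta^\star + \eg_n$ collapses to the basic inequality
\[
\tfrac12\|\bX\Delta\|_{\mcH^N}^2 \;\le\; \sum_{n=1}^N\langle \eg_n, X_n^\top\Delta\rangle_\mcH + \lambda\bigl(\|\beta^\star\|_{\ell_1/\mcH}-\|\widehat\beta\|_{\ell_1/\mcH}\bigr).
\]
I would then rewrite the noise term by exchanging the sums: $\sum_n\langle \eg_n,X_n^\top\Delta\rangle_\mcH = \sum_i \langle (\bX^\top\eg)_i,\Delta_i\rangle_\mcH$, and control it by H\"older, $|\sum_n\langle \eg_n,X_n^\top\Delta\rangle_\mcH|\le \|\Delta\|_{\ell_1/\mcH}\cdot\max_i\|(\bX^\top\eg)_i\|_\mcH$.

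The core probabilistic step is bounding $\max_i\|(\bX^\top\eg)_i\|_\mcH$. Each $(\bX^\top\eg)_i=\sum_n X_{ni}\eg_n$ is a centered Gaussian element of $\mcH$ with covariance operator $NC$ (using $\sum_n X_{ni}^2=N$ from the standardized columns assumption), so its squared norm has the same distribution as $\sum_k N\lambda_k\,\xi_{ki}^2$ with $\xi_{ki}$ iid standard normal. The Laurent--Massart tail bound for weighted $\chi^2$ sums gives, for any $t>0$,
\[
\|(\bX^\top\eg)_i\|_\mcH^2 \le N\bigl(\|\Lambda\|_1 + 2\|\Lambda\|_2\sqrt{t}+2\|\Lambda\|_\infty t\bigr)
\]
with probability $1-e^{-t}$. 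A union bound over $i=1,\dots,I$ with $t=\log(I/\delta)$, combined with the hypothesis on $\lambda$, delivers $\max_i\|(\bX^\top\eg)_i\|_\mcH\le \lambda/2$ on an event $\mathcal{E}$ of probability at least $1-\delta$.

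On $\mathcal{E}$ the basic inequality becomes $\tfrac12\|\bX\Delta\|_{\mcH^N}^2 \le \tfrac{\lambda}{2}\|\Delta\|_{\ell_1/\mcH} + \lambda(\|\beta^\star\|_{\ell_1/\mcH}-\|\widehat\beta\|_{\ell_1/\mcH})$. Splitting the $\ell_1/\mcH$ norms across the true support $S_0$ and using the triangle inequality yields $\|\beta^\star\|_{\ell_1/\mcH}-\|\widehat\beta\|_{\ell_1/\mcH}\le \|\Delta_{S_0}\|_{\ell_1/\mcH}-\|\Delta_{S_0^c}\|_{\ell_1/\mcH}$, and rearranging gives
\[
\tfrac12\|\bX\Delta\|_{\mcH^N}^2 + \tfrac{\lambda}{2}\|\Delta_{S_0^c}\|_{\ell_1/\mcH} \;\le\; \tfrac{3\lambda}{2}\|\Delta_{S_0}\|_{\ell_1/\mcH}.
\]
In particular the cone condition $\|\Delta_{S_0^c}\|_{\ell_1/\mcH}\le 3\|\Delta_{S_0}\|_{\ell_1/\mcH}$ holds, so $\Delta$ is an admissible direction for $\mathsf{RE}_{\mathsf F}(I_0,\alpha)$. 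Invoking the restricted eigenvalue property together with Cauchy--Schwarz ($\|\Delta_{S_0}\|_{\ell_1/\mcH}\le \sqrt{I_0}\,\|\Delta_{S_0}\|_{\mcH^I}\le \sqrt{I_0}\|\Delta\|_{\mcH^I}\le \sqrt{I_0/(\alpha N)}\|\bX\Delta\|_{\mcH^N}$) and dividing through by $\|\bX\Delta\|_{\mcH^N}$ produces the prediction bound. The $\ell_1/\mcH$ bound then follows from $\|\Delta\|_{\ell_1/\mcH}\le 4\|\Delta_{S_0}\|_{\ell_1/\mcH}\le 4\sqrt{I_0}\|\Delta\|_{\mcH^I}\le 4\sqrt{I_0/(\alpha N)}\|\bX\Delta\|_{\mcH^N}$.

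The main obstacle is the noise-control step: unlike the classical LASSO, the dual object is not a scalar random variable but a Gaussian random element of an infinite-dimensional Hilbert space, so the concentration inequality has to track the three-term mixture $\|\Lambda\|_1+\|\Lambda\|_2\sqrt{t}+\|\Lambda\|_\infty t$ (rather than a single subgaussian scale) in order to match the form of $\lambda$ assumed in the hypothesis. Once the Laurent--Massart-type bound is in hand, the rest of the argument is a direct Hilbert-space transcription of the usual LASSO proof.
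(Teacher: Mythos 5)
Your proposal is correct and follows essentially the same route as the paper's proof: the same basic inequality, the same H\"older/union-bound control of $\max_i\|(\bX^\top\eg)_i\|_\mcH$ via the Karhunen--Lo\`eve expansion and the Laurent--Massart weighted-$\chi^2$ tail bound, and the same cone-plus-restricted-eigenvalue argument. The only (immaterial) difference is the final algebra---you divide through by $\|\bX\Delta\|_{\mcH^N}$ where the paper uses $4uv\le\tfrac12u^2+8v^2$, so you actually obtain the slightly sharper constants $3$ and $12$ in place of $4$ and $16$.
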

\noindent \textcolor{black}{We mention that since $\Lambda $ is vector of the eigenvalues of $C$ in decreasing order, then one has that
$\|\Lambda\|_2^2 = \sum_{i=1}^\infty \lambda_i^2, \ \|\Lambda_1\|_1 = \sum_{i=1}^\infty \lambda_i, $ and $\|\Lambda\|_{\infty} = \lambda_1$.  The target function $L_F(\beta)$ is convex and coercive, and thus has a least one solution.  As in the scalar case, the function may have multiple solutions, but this still depends heavily on the design $\bX$ and does not happen often in practice.  In the next section we will discuss how this estimate can be computed.}

Interestingly, the rates of convergence for $\mcH$-valued response variables are exactly the same as those for a scalar response.  This is due to our ability to extend scalar concentration inequalities to general Hilbert spaces, see Lemma \ref{l:n_exp_ineq}.   \textcolor{black}{If we take $\lambda \sim \sqrt{N \log(I)}$ then the prediction error becomes $\| \bX (\widehat \beta - \beta^\star)\|_{\mcH} \sim \sqrt{{I_0 \log(I)}}$ while the estimation error becomes $\| \widehat \beta - \beta^\star \|_{\ell_1/\mcH} \sim I_0 \sqrt{ \frac{\log(I)}{N}}$.   Since this result applies to any separable Hilbert space, our methodology is applicable to a wide range of applications beyond just the GWAS we consider here.  From random fields in spatial statistics to brain imaging in fMRI studies, all fall under this umbrella if they are embedded into a Hilbert space.    As a final note, we mention that the $\lambda$ given in Theorem \ref{t:highfreq_main} gives a very tight control of the probability that the two inequalities hold, however, for the convergence rates all that matters is that $\lambda$ has the right order, as is typical in LASSO style results.}

\section{Computational Details} \label{s:comp}
Here we present several computational tools which we utilize and can be found in Matlab code available through the corresponding authors's website.  We begin by providing computational details for the methods in Section \ref{s:func}.  Traditionally, when handling dense functional data, one constructs the functional objects by utilizing basis expansions.  This is the cornerstone of the FDA package in R.  If $e_1(\cdot), e_2(\cdot), \dots$ is a basis of $\mcH$ (e.g. B-splines or Fourier), then we can approximate $L_{\mathsf{F}}$ as
\[L_{\mathsf{F}}(\beta)\approx
\frac{1}{2} \sum_{n=1}^N \sum_{j=1}^J \langle Y_n - X_n^\top \beta, e_j \rangle^2 + \sum_{i=1}^I \sqrt{ \sum_{j=1}^J \langle \beta_i, e_j \rangle^2 },
\]
for $J$ large (often over one hundred).  Letting $B_{i,j} = \langle \beta_i, e_j \rangle$ and $Y_{nj} = \langle Y_n, e_j \rangle,$ we have that the above can be expressed as
\begin{align*}
& \frac{1}{2} \sum_{n=1}^N \sum_{j=1}^J ( Y_{nj} - X_n^\top B_{*j})^2 + \sum_{i=1}^I \|B_{i*}\| \\
& = \frac{1}{2}\| Y - \bA_{\mathsf{F}} \vc(\bB^\top) \| ^2 + \sum_{i=1}^I \|B_{i*}\|,
\end{align*}
where $\bA_\mathsf{F} = \bX \otimes \bI_{J \times J}.$  For large values of $J$, this can quickly become a substantial computational burden.  However, one can use a data driven basis, such as FPCA, so that $J$ can be taken relatively small.  However, we stress that dimension reduction is not our intent, and thus we can choose the number of FPCs to explain nearly all of the variability of the processes.  Using such an approach, it is common to move from 200 B-spline basis functions down to 5-10 functional principal components, 
%{\color{black} \bf I don't understand this comment: (lower with less explained variance)} {\color{blue}It is just using a data driven basis (from FPCA) to use as few basis functions as possible while loosing very little information about the original processes.  It is not uncommon to explain nearly all of the variability with as little as 5-10 PCs}
 with nearly no information loss.   This phrasing now allows us to use the same group LASSO computational tools as in the sparse setting.  However, it is now possible (even in sparse data settings) that $J$ is less than the number observed time points, resulting in $\bA_{\mathsf{F}}$ being smaller than $\bA$ and with a far simpler form.

Since $I$, the number of groups (SNPs), will generally be extremely large, we implement a screening rule that allows us to substantially reduce the potential number of groups (SNPs) we consider when we minimize $L(\cdot)$ or $L_{\mathsf{F}}(\cdot)$ for some fixed $\lambda$.  Using \citet[Theorem 4]{wang2012lasso}, we do the following:
\begin{itemize}
\item (In parallel.) For each SNP $i$, define $A_i\in \mbR^{\sum_n M_n\times J}$ when using the sparse algorithm with entries
\[(A_i)_{nm,j} = X_{ni}e_j(t_{nm})\;.\]
For the dense algorithm define $A_i\in \mbR^{N J \times J}$ as
$
%(A_i)_{nj_1,j_2} = X_{ni}
A_i = X^{(i)} \otimes \bI_{J\times J}
$, 
%{\color{black}\bf should there be an $m$ in the indexing here? I don't understand where the $e_j(t_{nm})$ term went} {\color{blue}Typo.  There is no $m$ for the high frequency case as all of the curves have been presmoothed.  So $m$ is replaced with $J$.} 
and compute
\[\norm{A_i^\top Y}_2\text{ and }\norm{A_i}_{\mathsf{F}}\;.\]
\item Find any $\lambda_0\geq \max_i \norm{A_i^\top Y}_2$. At penalty parameter $\lambda_0$, the group LASSO solution will be $\widehat{B}_{\lambda_0}=0$.
\item For any $\lambda<\lambda_0$, according to \cite{wang2012lasso},
\[\frac{1}{\lambda_0}\norm{A_i^\top Y}_2 +\left(\frac{1}{\lambda}-\frac{1}{\lambda_0}\right)\norm{A_i}_{\mathsf{F}}\norm{Y}_2<1 \quad \Longrightarrow \quad (\widehat{B}_{\lambda})_{i*}=0\;.\]
\end{itemize}
Therefore, if our aim is to apply a convex optimizer to solve the group LASSO problem with at most $s$ SNPs, then after finding $\lambda_0=\max_i \norm{A_i^\top Y}_2$ we can choose any $\lambda$ sufficiently large so that no more than $s$ SNPs violate the inequality above.  SNPs which do violate the above are then dropped since they will not enter the solution path.  This allows us to make a substantial reduction in the number of predictors.  In our simulations and application we could handle on the order of 10000 SNPs jointly when fitting the FS-LASSO using the ADMM procedure, with the sparse tools requiring around 30gb of RAM, and the smooth requiring around 4gb.  By using this screening rule, we can reduce millions of predictors to tens of thousands, and then do a final fit using any number of convex optimization routines.  %The authors code (available through the corresponding author's website) utilizes the ADMM procedure.

Finally, we mention the choice of the smoothing parameter $\lambda$.  In our simulations we utilize the BIC, though we include several other options in the application section.  We calculate the BIC as
\[
\log(\hat \sigma^2) \sum M_n + J I_{active} \log(N) \qquad \text{or} \qquad
\log(\hat \sigma^2) N M_{pc} + M_{pc} I_{active}  \log(N),
\]
for the sparse and dense methods respectively.  Here $I_{active}$ is the number of predictors in the current model.  The error $\hat \sigma^2$ is calculated as 
\[
\frac{1}{\sum M_n} \sum_n \sum_m (Y_n(t_{nm}) - \hat Y_n(t_{nm}))^2
\qquad \text{or} \qquad
\frac{1}{N M_{pc}} \sum_n \sum_j (Y_{nj} - \hat Y_{nj})^2,
\]
for the sparse and dense methods respectively.  The predicted values are computed by recomputing the corresponding least squares estimates, to eliminate the effect of the bias.  Given the dependence in the data, the BIC is an adequate, though not optimal choice.  Further work is needed on tuning parameter selection for functional models, but we leave this for future research.  Fitting the model for a grid of values for $\lambda$ can be done relatively quickly by utilizing a warm start, i.e. using the previous solution for a particular $\lambda$ value as the starting point for finding the next solution.

\section{Simulations} \label{s:sims}
In this section we present a simulation study to compare the performance of the discussed methods.  The predictors, $X_{ni}$, are generated from a normally distribution with $\Cov(X_{ni},X_{n'j}) = 1_{n = n'} \rho^{|i - j|}$, which is an autoregressive covariance with the rate of decay controlled by $\rho$ (with independence across subjects, i.e.\ $X_{n*}$ and $X_{n'*}$ are independent for $n\neq n'$).  
\textcolor{black}{We mention that in our genetic application the predictors take values 0/1/2 and are thus not normal.  Additional simulations using Binomial predictors can be found in Appendix \ref{a:binom}, and while the performance of all of the methods presented here decrease, the relative conclusions from comparing the methods stay the same, we thus focus on the normal case here.}
%\textcolor{black}{In the second setting we take them to be Binom(2,1/2) to emulate the data application a bit better.}
  We take $I=1000$ and $I_0=10$.  
The nonzero functions $\{\beta_i(t): t\in [0,1]\}$ are randomly generated from the Mat\'ern process with parameters $(0,I_0^{-1},1/4,0,5/2)$ and the errors $\{\eg_{n}(t) :t\in [0,1]\}$ are generated in the same way but with parameters $(0,1,1/4,0,3/2)$, which results in errors that are less smooth than the parameter functions.  \textcolor{black}{The Mat\'ern process is very flexible family of stationary processes which produce more realistic structures for biological applications, as compared to Brownian motion or simple low dimensional structures.}  Each subject is observed at 10 uniformly distributed locations which differ by curve.  We consider $N =  50, 100, 200$ and $\rho =0.5$ and $0.75$, and use 1000 repetitions of each scenario.  For the functional method, FPCA is carried out using the PACE package in Matlab.  For the sparse method we use cubic Bsplines with $J = 30$.

To compare the methods without having to worry about tuning parameter selection, we examine smoothed ROC curves which give the proportion of true positives found as a function of the false positives.  The curves for all scenarios are given in Figure \ref{f:roc}. \textcolor{black}{ Surprisingly, the ROC curves for the two method are about the same, meaning that in terms of variable selection the sparse and functional methods have nearly equivalent performance on average, though, as we will see in the application section, for a single iteration they can still disagree on the selected subset.}  \textcolor{black}{Next we examine the average prediction error in Figure \ref{f:error_time}, with the penalty parameters chosen by BIC, AIC, and 2-fold cross validation.  There we see that the sparse method has an advantage, 
%{\color{black}\bf I'm not sure it's slight!} {\color{blue} Agreed} 
though this decreases for large sample sizes.  The BIC and AIC perform about the same, while the CV criteria seems to perform the worst.  We note that, since the prediction error compares linear combinations of the parameter estimates, we use it as a single number proxy summarizing the estimation error as well, though clearly if there were particular patterns for $\beta(t)$ one was interested in examining (linear, sinusoidal, etc.), it would be interesting to include them in the simulations and examine their estimation error, however we don't purse this further here.}
%However, we do see some odd behavior as the prediction error actually increases slightly for the sparse case when $N$ goes from 100 to 200.  This is not chance variation.  It is an artifact of the BIC in this case, as it is not designed for functional data which exhibit strong correlation.  This illustrates the need for future work on parameter selection methods tailored for FDA.  

\begin{figure}[h]
\centering   
\includegraphics[width=0.49\textwidth]{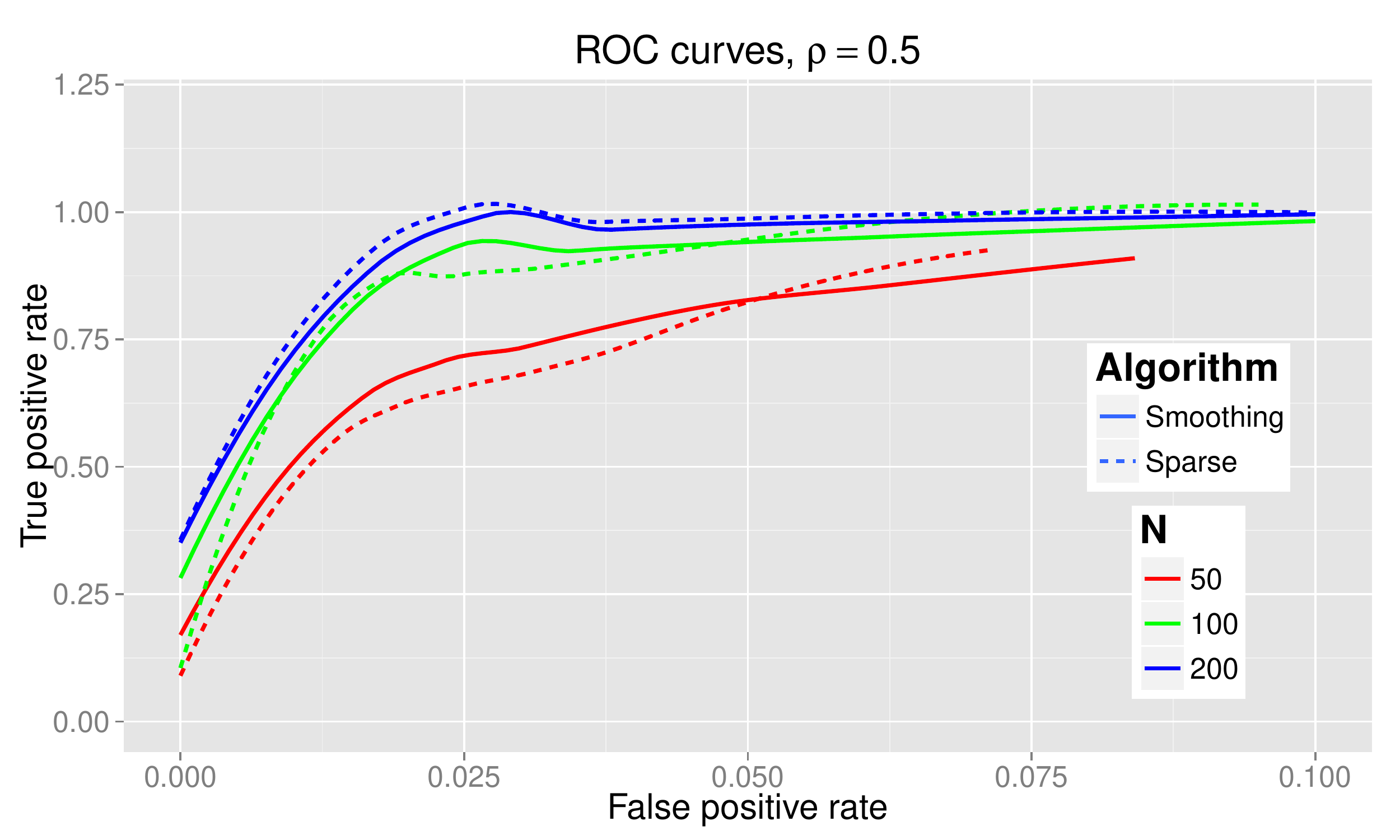}
\includegraphics[width=0.49\textwidth]{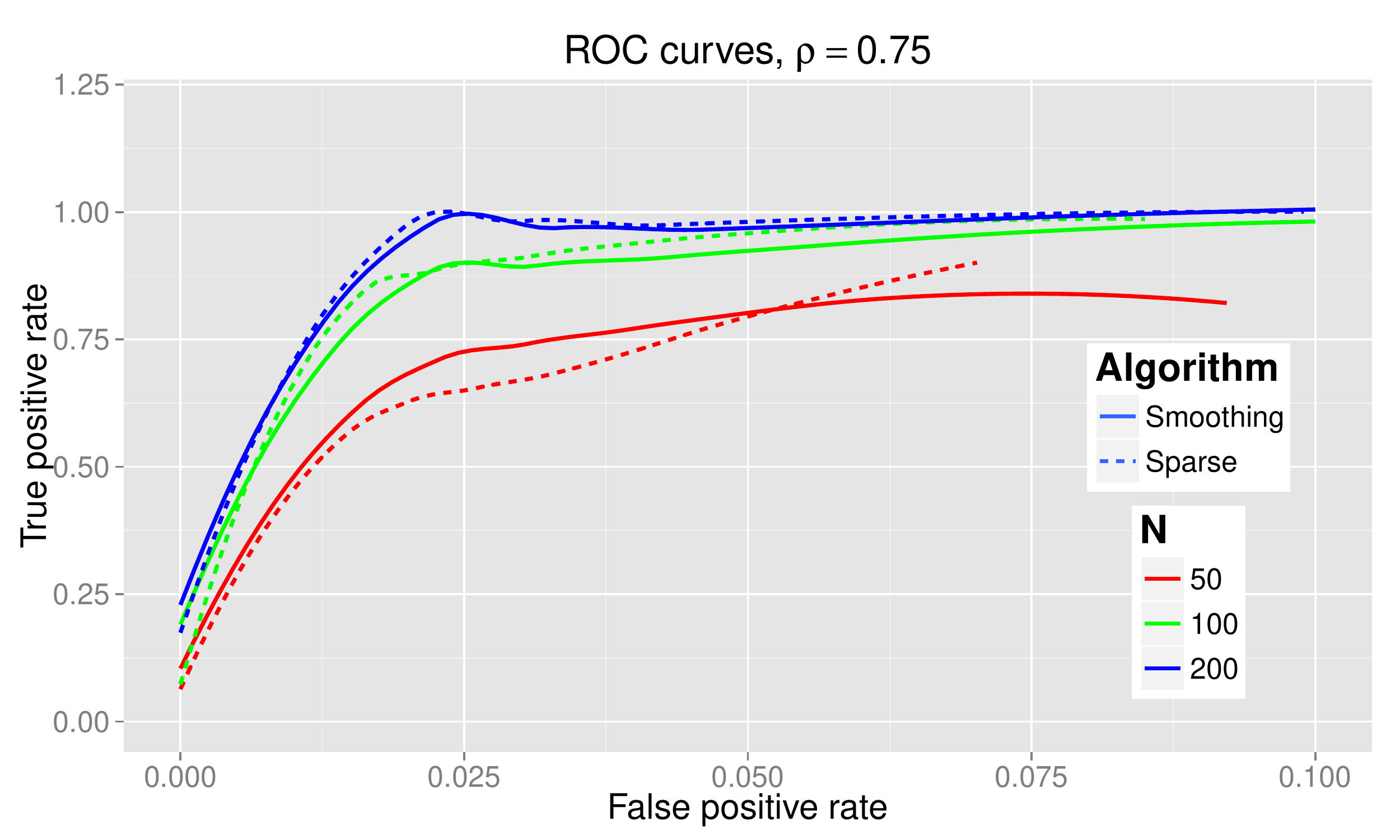}
\caption{ROC curves comparing the smooth and sparse methods.   \label{f:roc}}
\end{figure}

\begin{figure}[h]
\centering  
\includegraphics[width=0.49\textwidth]{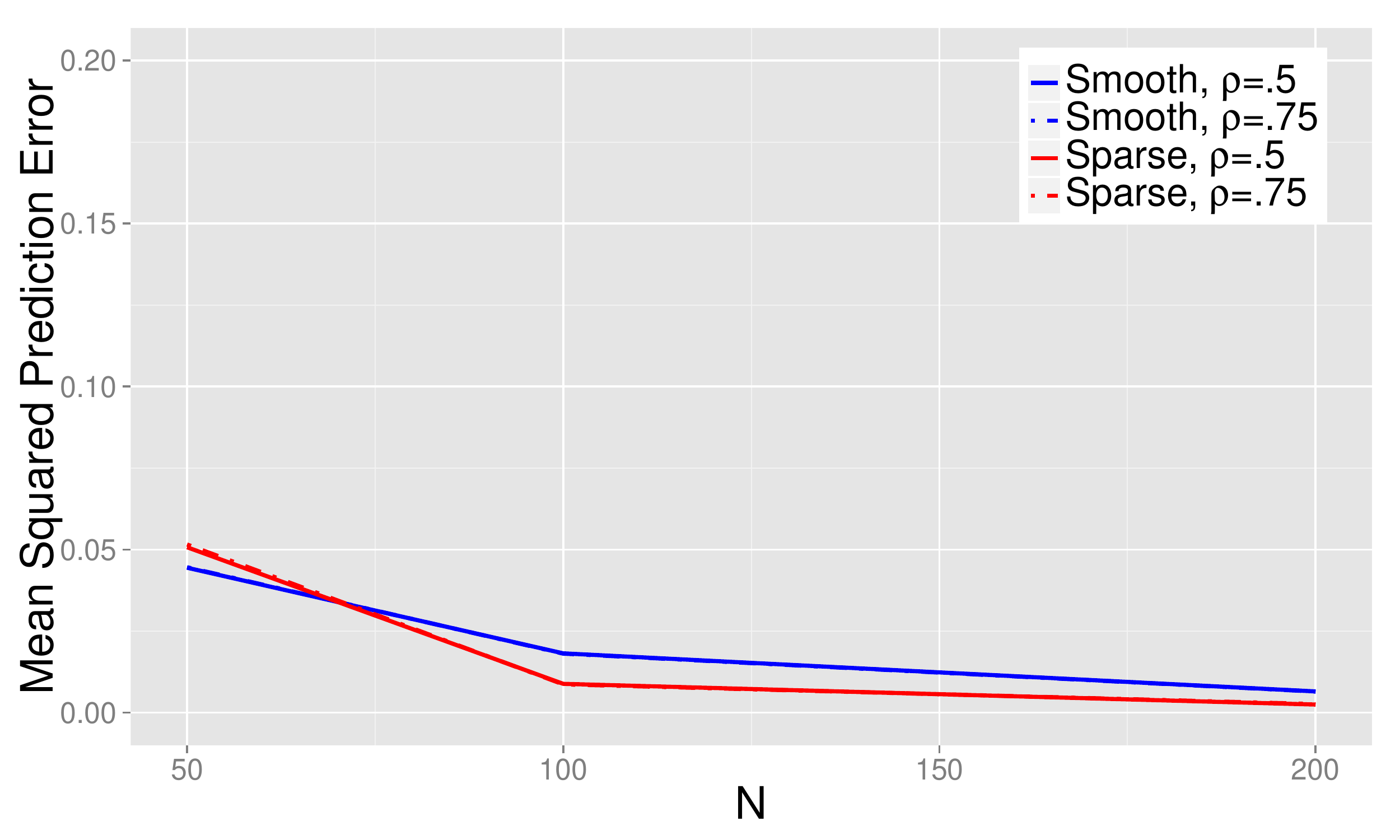} %newplots/
\includegraphics[width=0.49\textwidth]{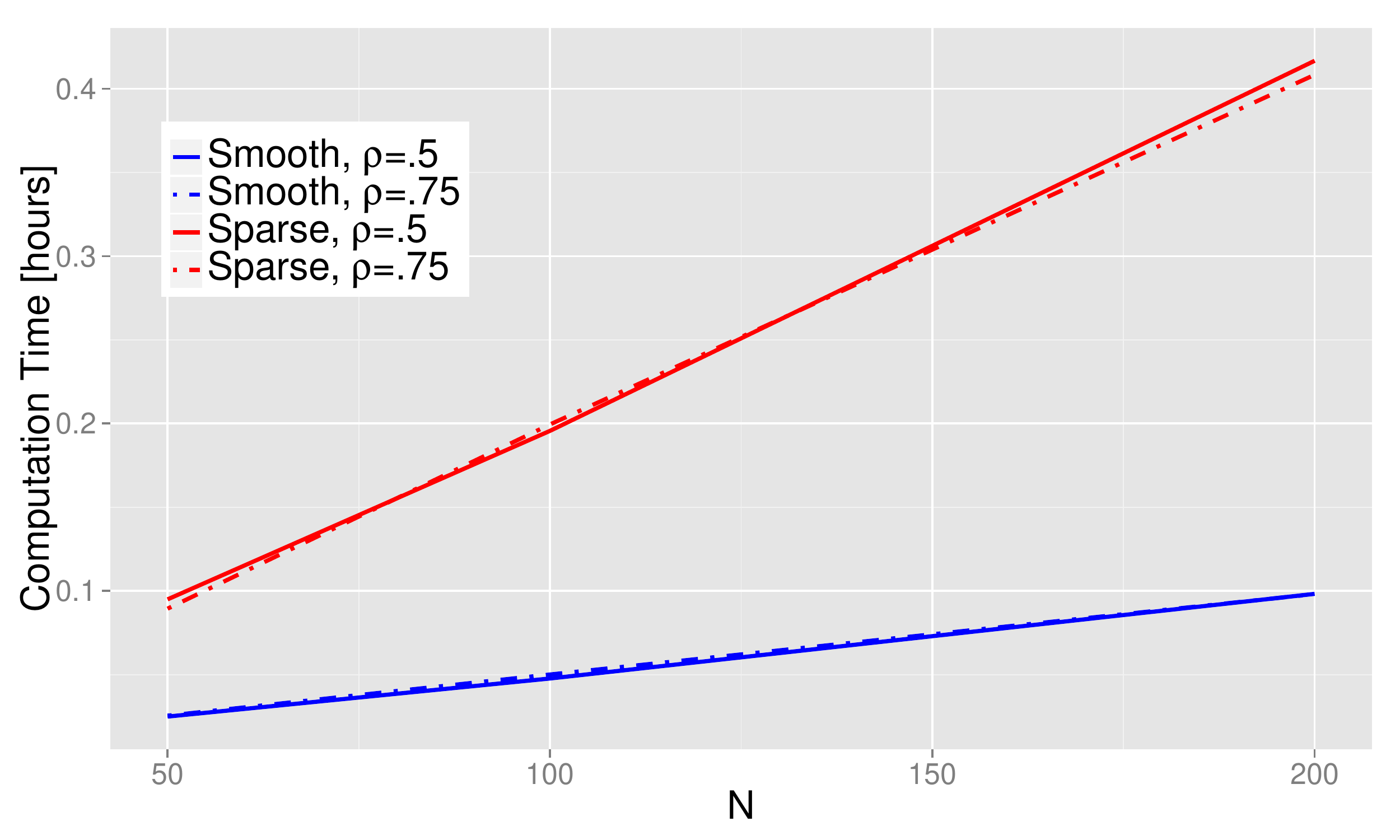} %newplots/
\includegraphics[width=0.49\textwidth]{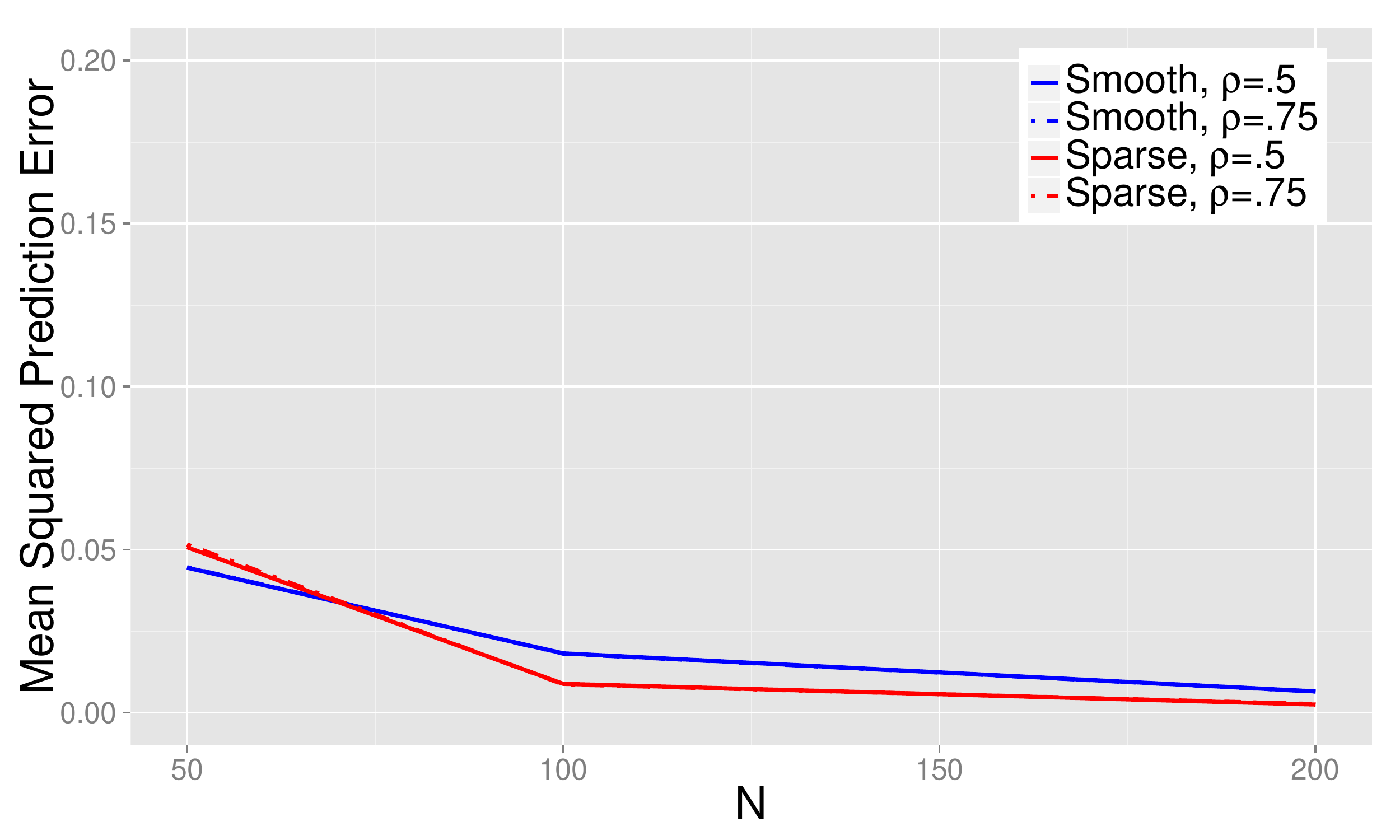} %newplots/
\includegraphics[width=0.49\textwidth]{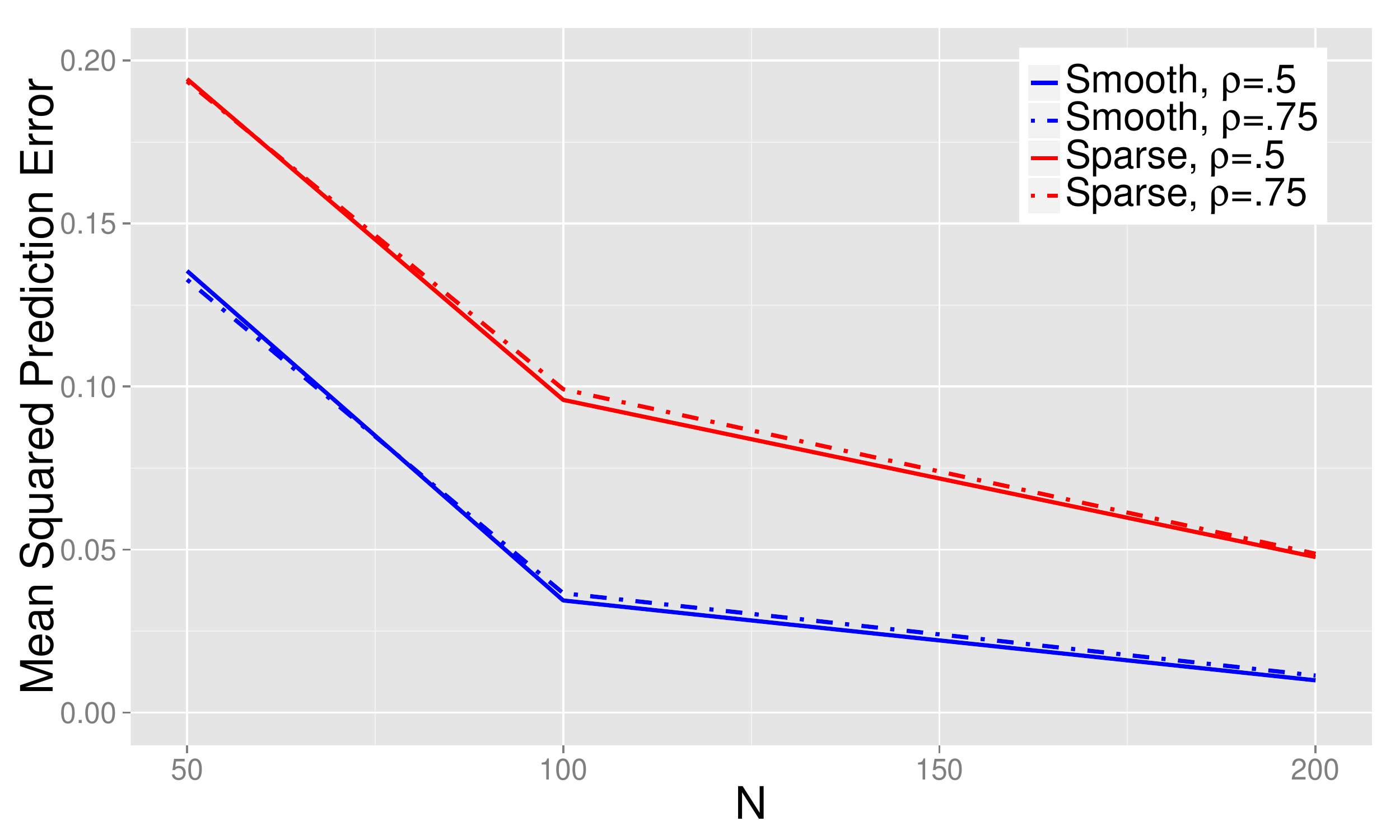} %newplots/
\caption{Prediction error and computation times for smooth and sparse methods.   The top row is the MSE and computation time for the BIC, while the first panel on the second row is the MSE for AIC, and the second panel on the second row is the MSE for cross-validation.  \label{f:error_time}}
\end{figure}

\textcolor{black}{
Next we compare to two forms of scalar LASSO, one which ``time-corrects" by incorporating a nonparametric estimate of the mean (TC-LASSO) and one which does not.  Using the nonparametric smoother is an attempt to account for the non-stationarity of the processes.  The resulting ROC curves are given in Figure \ref{f:reg} for $\rho = 0.5$ and $\rho = 0.75$, with the solid lines indicating scalar LASSO and the dashed lines the time-corrected version.  Examining the plots we see a potentially interesting pattern.  For smaller false positive rates, LASSO is actually doing better than FS-LASSO (Figure \ref{f:roc}).  However, as one moves along the x-axis (the false positive rate), the neither version of LASSO changes much, while FS-LASSO quickly tops out, finding all significant predictors.  This suggests that LASSO is able to find a subset of predictors rather quickly, likely those which do not vary much over time, but is unable to capture the more complex signals.  Surprisingly, incorporating a nonparametric estimate of the mean (TC LASSO) actually results in a large decrease in power.  This further supports the need for FS-LASSO, if one believes there is nonlinear structure in the data, then patching LASSO is substantially worse than moving to a functional framework.}

\begin{figure}[h]
\centering  
\includegraphics[width=0.49\textwidth]{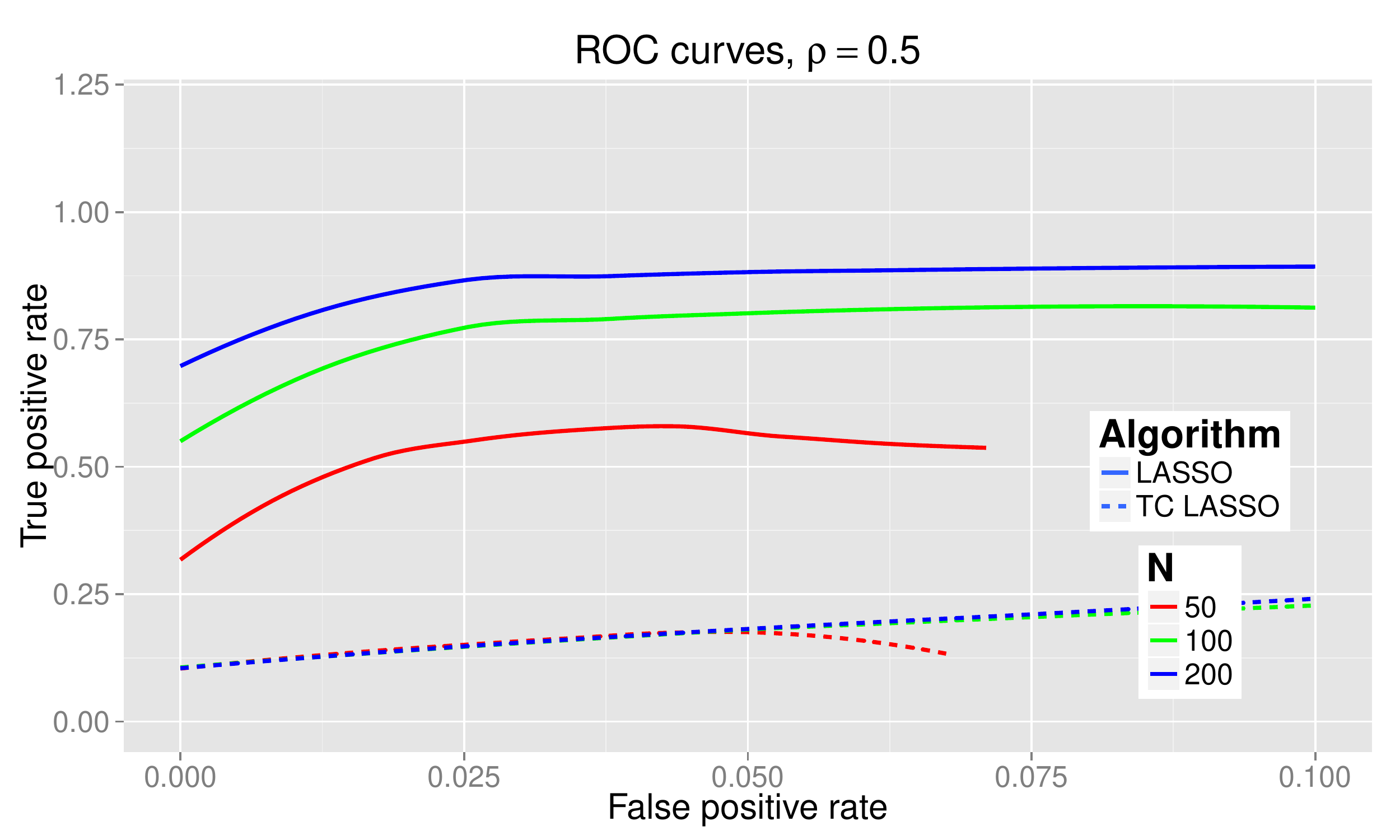} %newplots/
\includegraphics[width=0.49\textwidth]{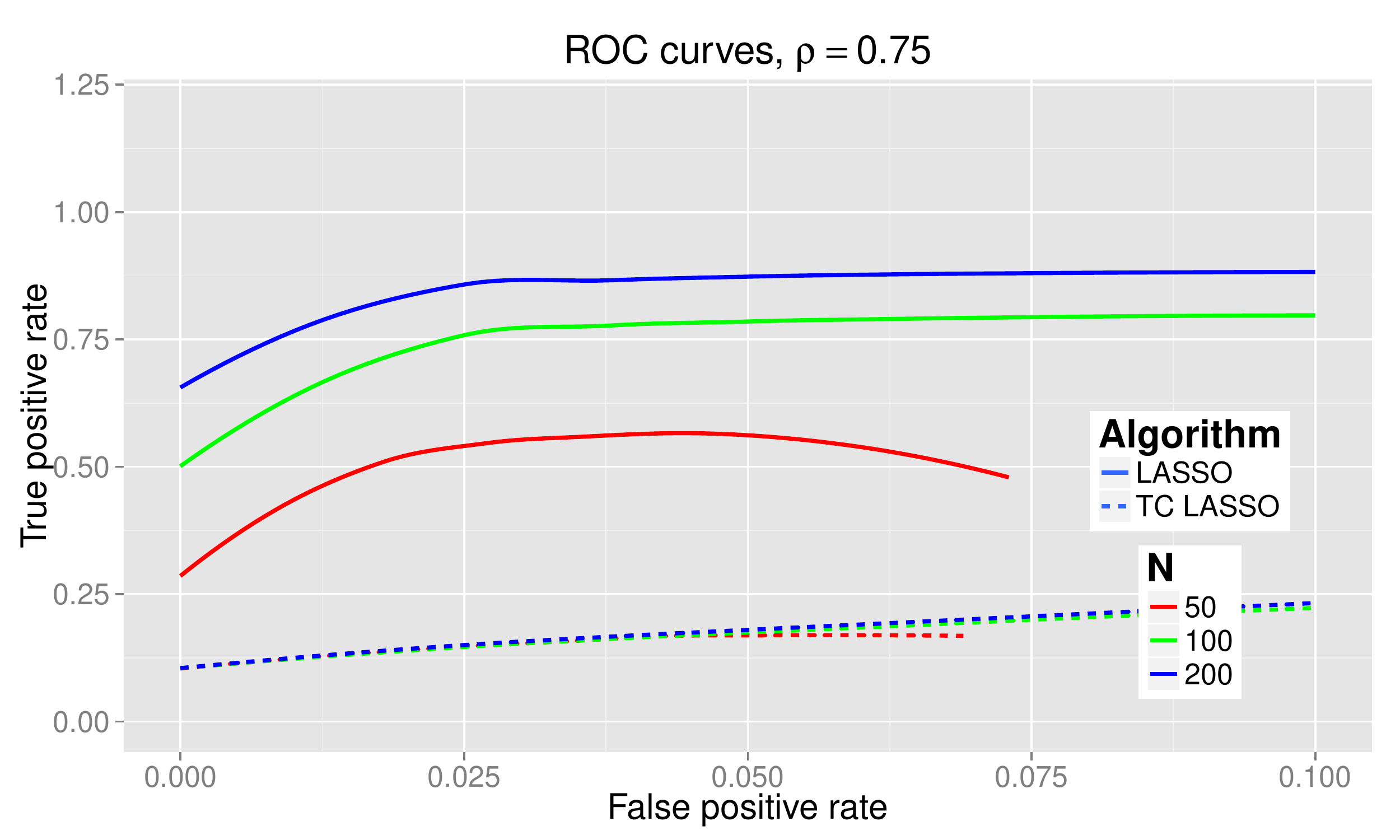} %newplots/
\caption{ROC curves for scalar LASSO with (TC LASSO) and without (LASSO) a nonparametric estimate of the mean.  \label{f:reg}}
\end{figure}

\textcolor{black}{As a final comparison of statistical performance, we examine the sensitivity of the sparse FS-LASSO on the choice of $J$.  We consider $J=  27$ and $J = 33$, which constitutes a $10\%$ change in the number of basis functions.  The resulting MSE curves are given in Figure \ref{f:basis}.  Unfortunately, there does appear to be some sensitivity in the performance of FS-LASSO relative to the number of basis functions.  While $J = 33$ and $J = 27$ perform similarly, both are slightly worse than $J=  30$, and in fact, worse than the smoothing algorithm.  This can be remedied by also choosing the number of basis functions by BIC, AIC, or CV, or by using the smoothing method to select the predictors.  Namely, one would ideally like to use the penalty to also control the level of smoothing (not just the selection of the predictors).  FS-LASSO behaves very similarly to a ridge regression on the selected predictors, but a penalty which acts more like a smoother would likely perform better (at least in terms of MSE).  However, we do not explore this further here as this is an important topic of further research.}

\begin{figure}[h]
\centering  
\includegraphics[width=0.49\textwidth]{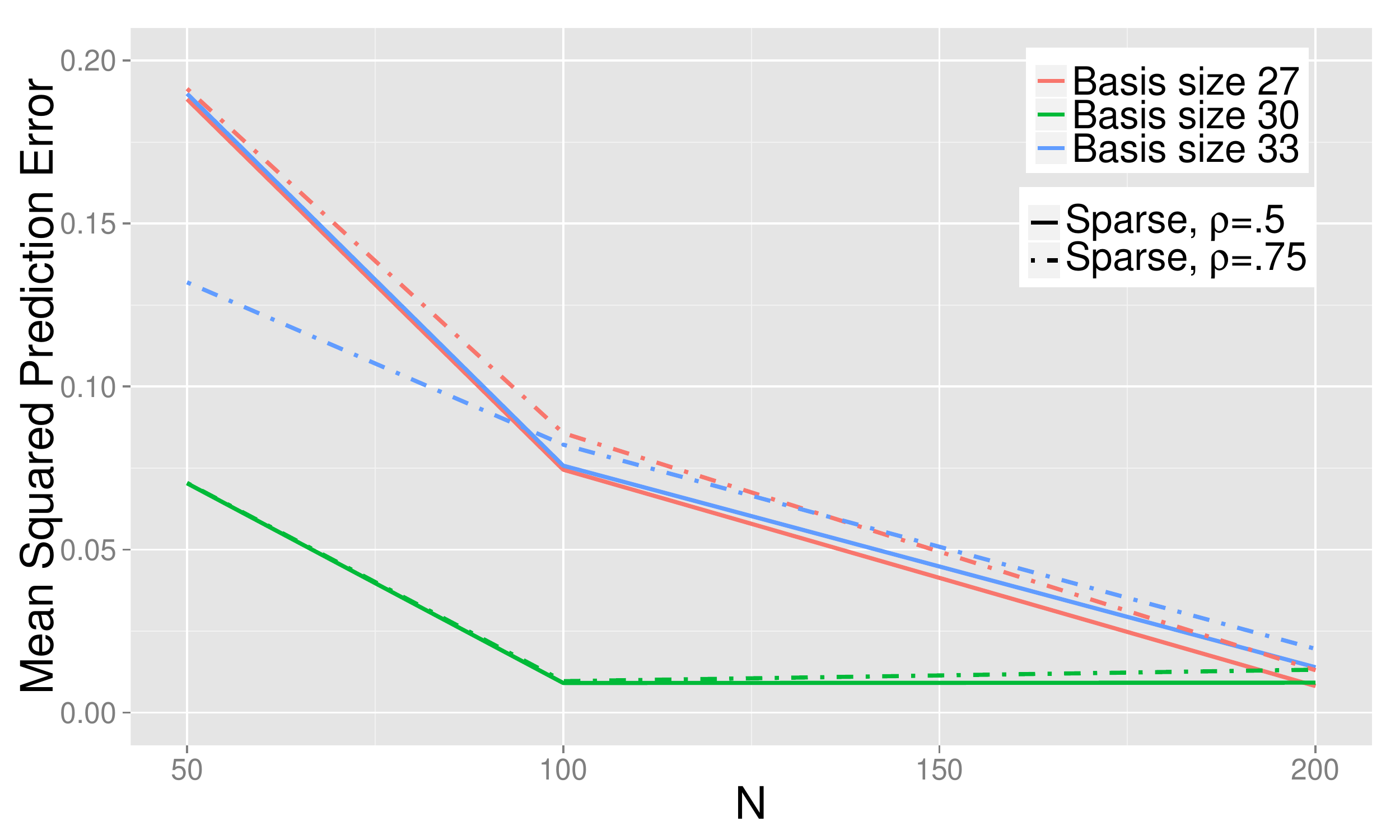} %newplots/
\caption{Mean squared prediction error for sparse FS-LASSO with differing numbers of basis functions.  \label{f:basis}}
\end{figure}

Lastly, we mention computation time and memory.  Methods which scale well are critical for applications such as genetic association studies.  There, one needs to work with millions of predictors and both computation time and memory become major issues.  On a desktop computer, the sparse method presented here can be applied with predictors of the order of 10,000 or so, but not much higher.  The smoothing method, which allows for the inclusion of an FPCA, can be applied with predictors of the order of 20,000.  On clusters these numbers can be increased depending on the available memory.  In Figure \ref{f:error_time} we plot average computation times for the two methods when taking $I = 10000$, but keeping everything else the same.  We see that the smooth method has a substantial edge in computation time, resulting in shorter times which also scale better with sample size.

\section{Framingham Heart Study} \label{s:FHS}
\begin{figure}[h]
\centering  
\includegraphics[width=0.48\textwidth]{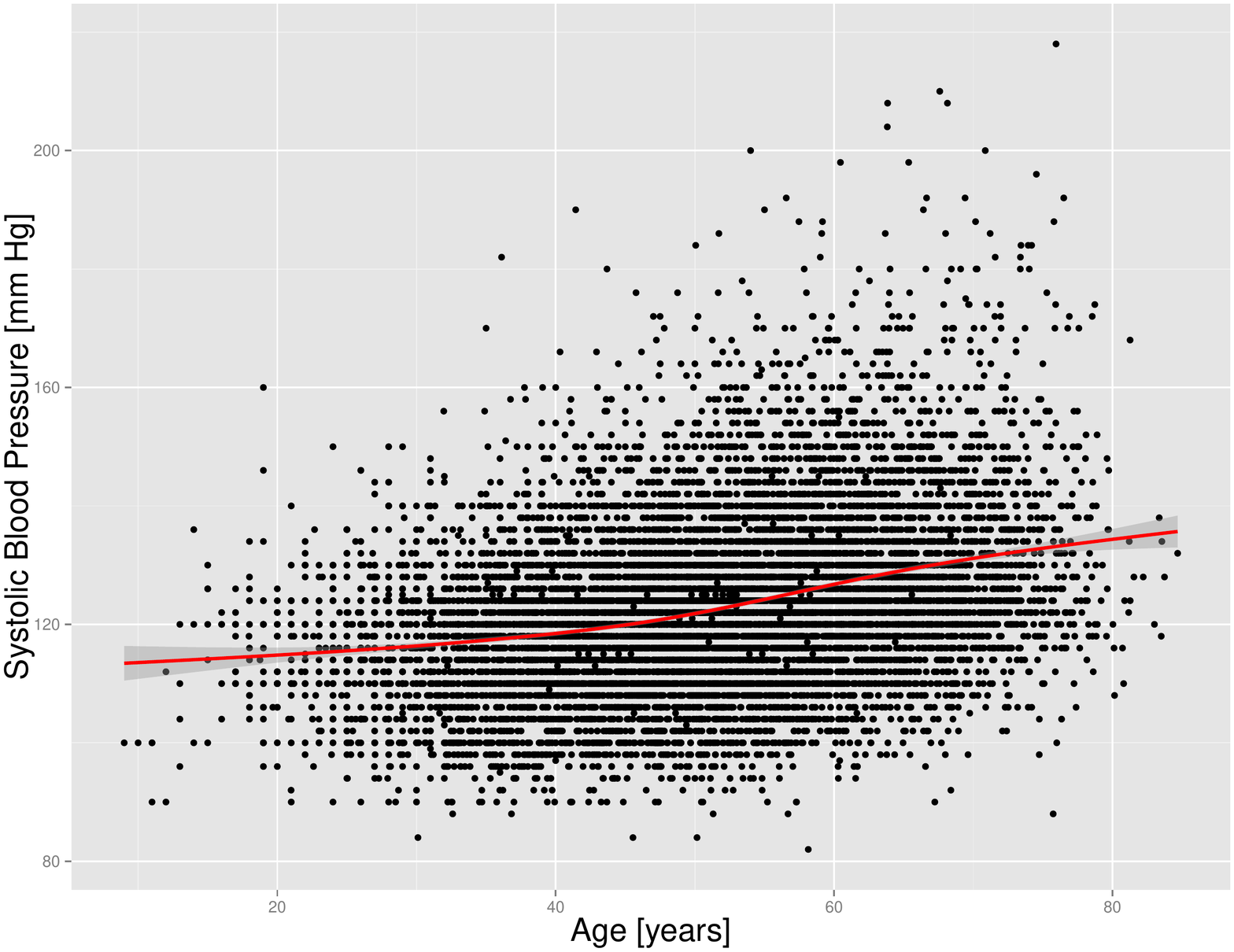}
\includegraphics[width=0.48\textwidth]{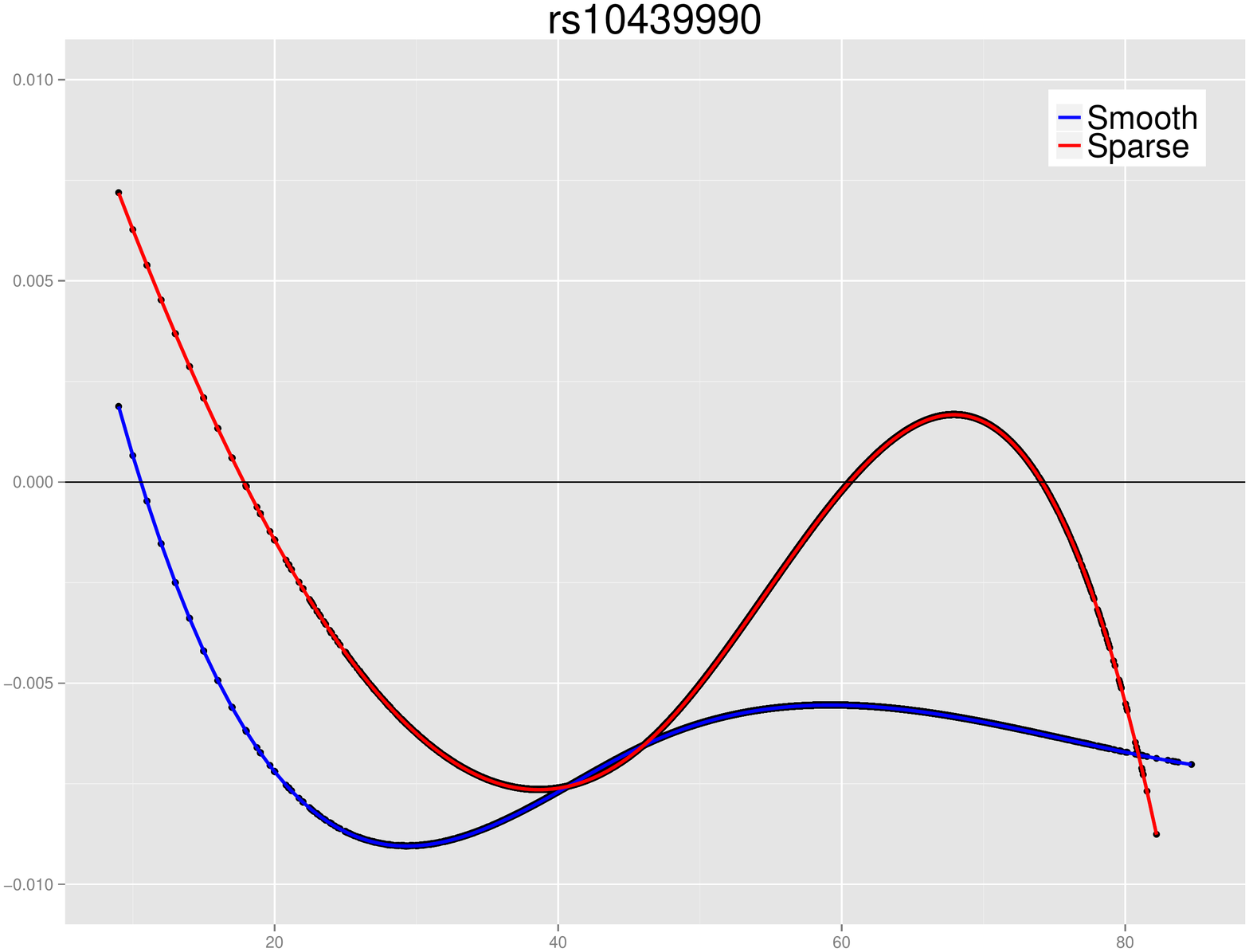} %newplots/
\caption{Left Panel: Systolic blood pressure as a function of age from the FHS.  The red line is a local linear smoother estimating the mean. Right Panel: Plot of estimated effect for SNP rs10439990 using the `smooth' algorithm (blue) and the `sparse' algorithm (red). \label{f:sbp}}
\end{figure}
The Framingham Heart Study, FHS, is a long term longitudinal study with the goal of better understanding the risk factors for heart disease.  Genome--wide SNP data and phenotype information were downloaded from dbGaP  (http://www.ncbi.nlm.nih.gov/gap) study accession phs000007.v25.p9.  The study consists of three cohorts of subjects with the first cohort recruited in 1958.  FHS has had a tremendous impact on our understanding of health and risk factors for heard disease.  Since a full account would take far too much space, we refer the interested reader to \cite{o2008cardiovascular,mahmood:2014,chen:2016} for details on different findings and impacts due to FHS.  
Here we examine the second cohort which consists of 1924 subjects.  Each subject contributed up to 7 clinical exams over 29 years, though some subjects passed away during the study and contributed less.  The subjects were genotyped using the Illumina Omni 5M platform resulting in approximately 4.3 million SNPs genotyped.  Our goal here is to find a subset of SNPs impacting systolic blood pressure, a primary risk factor for heart disease.  As an illustration, in Figure \ref{f:sbp} we plot all of the blood pressure measurements versus age and include a local linear smoother.  We see that (as is well known) blood pressure increase monotonically, though nonlinearly, with age.  \textcolor{black}{When we combine this fact with the high within-subject correlation of blood pressure, we see that scalar procedures such as LASSO are inappropriate.}  When applying our procedures, we first remove the effects of gender, height, and HDL cholesterol levels nonparametrically using a local linear smoother.

Due to the size of the data, we cannot apply the methods to the entire set jointly.  We thus first use our method to screen the number of predictors down to a computationally manageable size.  One could take the top performers from marginal regression, but instead we use our procedure to rank the SNPs according to when they enter the model (as one varies the tuning parameter), and take the top ones.  First, we divide up the genome into segments of 10K SNPs for the sparse methods and 20K for the smooth.  Using our algorithm, we rank the SNPs in each segment according to the order in which they enter the model (as one decreases the tuning parameter).  
 We take the top ranking SNPS so that, when pooled, we have screened down to approximately 10K SNPs for the sparse and 20K for the smooth.  A final application of our methods are applied to these SNPs with the smoothing parameter chosen in four different ways: AIC, BIC, extended BIC with parameter $0.2$, and 2-fold cross-validation.  The top SNPs for each method are presented in Table \ref{t:SNPs_sparse_and_smooth}.  As we can see, the smoothing method has found substantially more significant SNPs than the sparse.  However, there were 4 candidates which appeared in both lists.  \textcolor{black}{Given how similar the sparse functional methods compared in simulations, it might at first be surprising that they only overlap on four SNPs, however, given the huge number of predictors, it seems reasonable that there would be substantial differences between the two.  As an illustration, in Figure \ref{f:sbp}, we provide a plot of one of the estimated SNPs, rs10439990, which was selected using both algorithms.  There we see that the effect is negative, meaning that a subject with one or two copies of the mutation has a lower chance of having high blood pressure.  However, the effect is not constant over time as it starts lower a younger ages, 20-40, and rises as subjects age, peaking around 70 or so.  While the two methods produce similar plots, they disagree on the magnitude of the effect later in life, with the sparse algorithm indicating that the protection is essentially gone, while the smooth algorithm indicates the protection is still present, though diminished.  However, we stress that the two submodels selected were different, and thus the difference in estimates could be due in part to having different predictors.  This plot also illustrates a cautionary note about interpreting endpoints.  With so few observations at the ends, it is safer to focus interpretations on sections with the most observations.  }
 
\textcolor{black}{ 
Quantifying the uncertainty of the output from these types of high-dimensional procedures is an ongoing area of research even in the scalar setting.  However, to test if either method is dramatically over fitting, we also applied a 10-fold cross-validation to compare the predictive performance between the sparse and smooth method.  The resulting values were $0.00961$ for the sparse approach and $0.00972$ for the smooth approach, thus showing that the sparse approach is doing a slightly better job in terms of prediction.  Given the inherently sparse nature of the data, it is maybe surprising that the smooth approach was still so close in terms of prediction.  Regardless, as is now common in genetic studies, a follow up on a separate data set would be required to validate the results.  Note that for serious follow up studies, it would be useful to extend techniques such as \cite{shi:2015} to the functional setting, which attempt to recover some of the relatively small effects that LASSO style procedures shrink to zero.  This is especially useful in genetic studies as the effect sizes are relatively small.  Lastly, an extension incorporating dependence between curves would useful as the FHS data included some related individuals, though we do not pursue this further here.  
}

As a final form of validation, we examined the different association results found in the literature for our selected SNPs.  This was accomplished using GWAS Central, \url{http://www.gwascentral.org/index}, which provides a searchable database of GWAS results.  There were at least a few results that validate some of our findings.  In particular, SNP rs10439990 is located the gene ZBTB20, which has been associated with Triglyceride levels \citep{kathiresan:2007}, a common risk factor for heart disease.  SNP kgp29965466 is located on the gene ATRNL1, which is associated with several negative health outcomes and indicators including nicotine dependence \citep{bierut:2007}.  SNP rs10497371 is on gene MYO3B, which has been associated with Diabetes \citep{scott:2007}.  Lastly SNP rs7692492 is on gene FAM190A, which has been associated with coronary heart disease \citep{samani:2007,larson:2007}.

\newcolumntype{M}{@{}>{\columncolor{white}[0pt][0pt]}c@{}}

\begin{figure}[ht]
\centering
\begin{tabular}{l@{\,}|M|M|M|M||l@{\,}|M|M|M|M}
\hline
\multicolumn{5}{c}{Top SNPs selected by sparse algorithm}&\multicolumn{5}{c}{Top SNPs selected by smooth algorithm}\\
  \hline
Chromosome, Name & \,AIC\,& \,BIC\, & \,EBIC & \, CV\, & Chromosome, Name & \,AIC\,& \,BIC\, & \,EBIC & \,CV\, \\ 
  \hline
2, rs3845756 & \cellcolor{gray}  &   & &   & 1, kgp5933154 & \cellcolor{gray}  & \cellcolor{gray}  &  &   \\ 
   \hline
   2, rs6414023& \cellcolor{gray}  & \cellcolor{gray}  &  &   & 1, rs17107710 & \cellcolor{gray}  & \cellcolor{gray}   & &   \\ 

   \hline
   2, rs10497371 &   &   &   &  & 2, kgp5982336 & \cellcolor{gray}  & \cellcolor{gray}  &  &  \\ 

   \hline
3, \textcolor{black}{rs10439990 }& \cellcolor{gray}  & \cellcolor{gray}  & &    &3, kgp26868438 & \cellcolor{gray}  & \cellcolor{gray}  & \cellcolor{gray} & \cellcolor{gray} \\ 

   \hline
4, rs7692492 &   &   &  & & 3, \textcolor{black}{rs10439990}& \cellcolor{gray}  & \cellcolor{gray}  & \cellcolor{gray} &       \\ 

   \hline
5, \textcolor{black}{kgp30202888} & \cellcolor{gray}  &   &  & & 3, kgp11928513 & \cellcolor{gray}  & \cellcolor{gray}  &     \\ 
 
   \hline
7,   \textcolor{black}{kgp3808198} & \cellcolor{gray}  &   & & & 5,  \textcolor{black}{kgp30202888} & \cellcolor{gray}  & \cellcolor{gray}  & \cellcolor{gray} &    \\

   \hline
7,  \textcolor{black}{kgp4510449} &   &   &   &  & 7,  \textcolor{black}{kgp3808198} & \cellcolor{gray}  & \cellcolor{gray}  &    \\  

   \hline
8, kgp8137960 &   &   &   & &   7,  \textcolor{black}{kgp4510449} & \cellcolor{gray}  & \cellcolor{gray}  & \cellcolor{gray} &    \\ 

   \hline
9, rs1702645 &   &   &   & & 10, kgp29965466 & \cellcolor{gray}  & \cellcolor{gray}  & \cellcolor{gray} & \cellcolor{gray}   \\ 

   \hline
12, rs978561&   &   &   & &  11, kgp10123049 & \cellcolor{gray}  & \cellcolor{gray}  &     \\ 

   \hline
13, rs1924783 & \cellcolor{gray}  & \cellcolor{gray}  &  &  &12, kgp6953877 & \cellcolor{gray}  & \cellcolor{gray} & \cellcolor{gray}  &     \\ 

   \hline
15, kgp6228266 &   &   &   & & 12, rs10859106  & \cellcolor{gray}  & \cellcolor{gray}  & \cellcolor{gray} &    \\ 

   \hline
%20, rs1287051 &   &   &   & &  \\ 

%   \hline
%22, rs5766970 &   &   &  &  &  \\ 

   \hline
\end{tabular}

\caption{Top SNPs selected by the Sparse Algorithm and by the Smooth Algorithm.  Gray boxes indicated the SNP is selected when using the corresponding variable selection criteria.  Red indicates SNPs which were chosen by both the smooth and sparse methods. \label{t:SNPs_sparse_and_smooth}}
\end{figure}
%\end{comment}
%{\color{black} \bf Could the SNPs be ordered instead by order of selection?} {\color{blue} I will talk to Thomas, maybe we can put a number which indicates the order it came in, as I think this look is more common in genetics.}  

\section{Conclusion}\label{s:conc} 
We have provided powerful new tools for analyzing functional or longitudinal data with a large number of predictors.  In the sparse case we provided new theory in the form of a restricted eigenvalue condition and accompanying asymptotic theory.  While phrased as a functional data method, the sparse case is closely related to varying coefficient models, and thus this work can be viewed as an extension of the work in \cite{wei:huang:li:2011}.  In the dense case, we provide a completely new methodology and accompanying asymptotic theory which allows the response functions to take values from any separable Hilbert space.  Such generality means the methods can be applied to a variety of settings including traditional functional outcomes, functional panels, spatial processes, and image data such as fMRI.  We also provide accompanying computer code which takes advantage of the structure of group LASSO so that our methods can be applied efficiently.

Our simulations suggest that the choice between sparse and smooth tool sets is not straightforward.  The simulations were done in a traditionally sparse setting, but the two methods were nearly equivalent in terms of variable selection and the functional method required far fewer computational resources.  However, estimation accuracy was better for the sparse methods.  This opens the door to future work which could involve multistage methods where selection is done via smooth methods and estimation via sparse methods.  

While our methods perform well, they also highlight the need for better tuning parameter selection methods.  %While methods such as BIC and cross-validation can be applied and work, they are not tailored to the dependence inherent in functional data, and thus they can produce some odd results (as seen in our simulation section).  
Methods such as BIC and cross-validation can be applied in this setting with good results, but such methods are not tailored to the dependence inherent in functional data, and we hope to investigate possible adaptations to these selection criteria for the functional setting in future work.

%\appendix

\bibliographystyle{abbrvnat}
%{\renewcommand{\baselinestretch}{0.9}
\small
\bibliography{flasso}
\normalsize

\clearpage
\def\@shorttitle{Testing}
%\widetext
\appendix
%\begin{center}
%\textbf{\large Online Supplemental Material}
%\end{center}
%%%%%%%%%% Merge with supplemental materials %%%%%%%%%%
%%%%%%%%%% Prefix a "S" to all equations, figures, tables and reset the counter %%%%%%%%%%
%\setcounter{equation}{0}
%\setcounter{figure}{0}
%\setcounter{table}{0}
%\setcounter{page}{1}
%\makeatletter
%\renewcommand{\theequation}{S\arabic{equation}}
%\renewcommand{\thefigure}{S\arabic{figure}}
%\renewcommand{\bibnumfmt}[1]{[S#1]}
%\renewcommand{\citenumfont}[1]{S#1}

\section{Binomial Simulations}\label{a:binom}
\textcolor{black}{
Here we carry out a nearly identical simulation study as in Section \ref{s:sims} except the normally distributed predictors are replaced with $B_{in} \sim Binom(2,p_i)$ random variables to more closely emulate the structure of the application.  To generate $B_{in}$ which are correlated, we use a probit model based on the the same simulation structure as in Section \ref{s:sims}; we take those normal random variables as the input in probit model where $p_i = \logit^{-1}(X_{in})$.  The MSE curves are summarized in Figure \ref{f:mse_b}.  If we compare these results with those from the bottom right panel from Figure \ref{f:error_time}, here we also use CV, we see that the pattern is very similar.  The error does not change dramatically between the two correlations, but the sparse method has a noticeably lower MSE.  
}

\begin{figure}[h]
\centering  
\includegraphics[width=.49\textwidth]{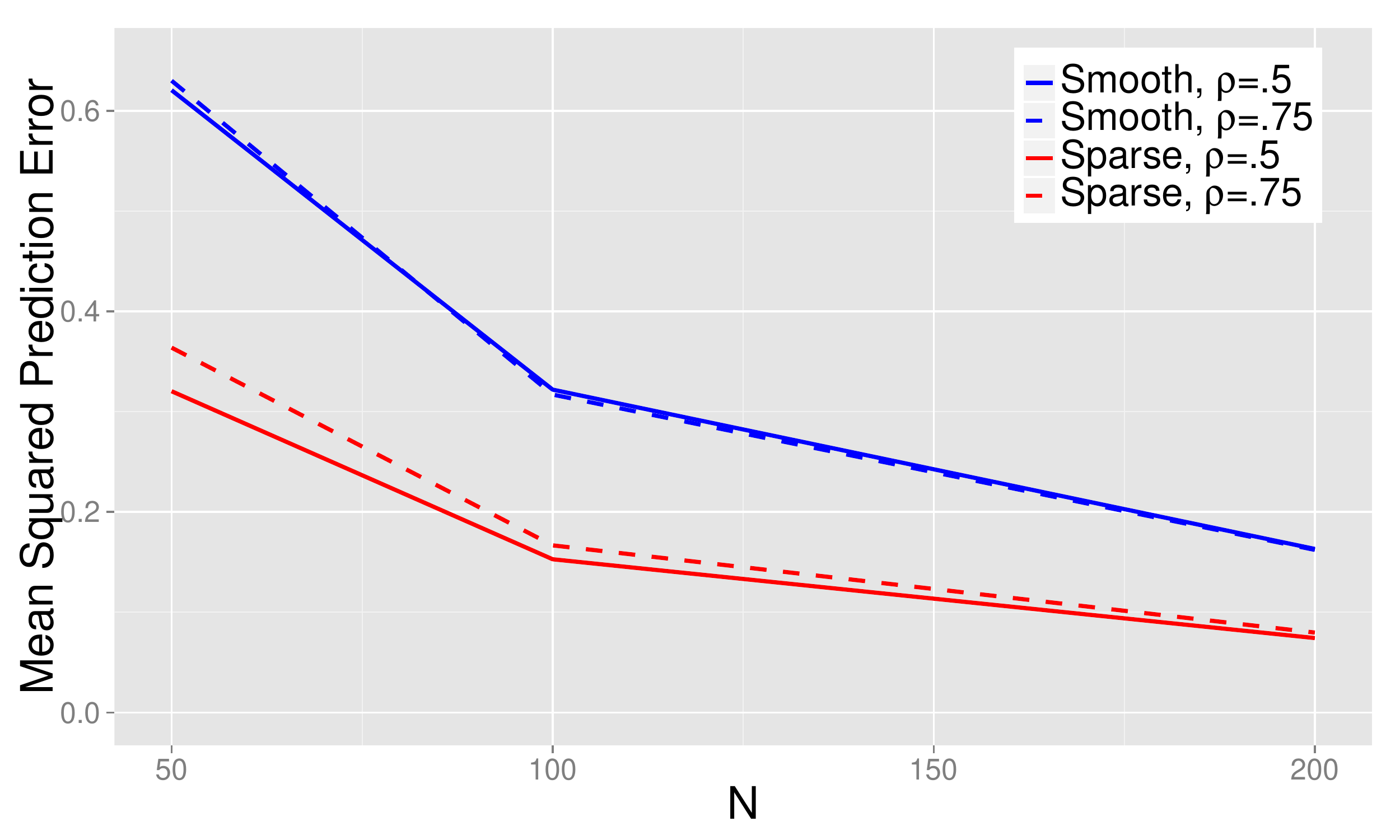}
\caption{MSE curves comparing the smooth and sparse methods with $Binom(2,1/2)$ predictors.  Tuning parameter is selected with cross-validation. \label{f:mse_b}}
\end{figure}
\section{Proofs for Section \ref{s:sparse}}

\subsection{Proof of Theorem~\ref{thm:main_sparse}}\label{a:proof_sparse}
Our analysis for this Theorem closely follows standard techniques for sparse and group sparse regression
under restriced eigenvalue conditions, such as the work by \citet{lounici2011oracle} and \citet{bickel2009simultaneous}.

Recall that $\widehat{\bB}$ is a minimizer of
\[L(\bB) = \frac{1}{2} \| Y - \bA \vc(\bB^\top)\|_2^2 + \lambda \| \bB\|_{\ell_1/\ell_2}\;.\]
Then in particular, we must have $L(\widehat{\bB})\leq L(\bB^\star)$, that is,
\[ \frac{1}{2} \| Y - \bA \vc(\widehat\bB^\top)\|_2^2 + \lambda \| \widehat\bB\|_{\ell_1/\ell_2}
 \leq \frac{1}{2} \| Y - \bA \vc(\bB^\star{}^\top)\|_2^2 + \lambda \| \bB^\star\|_{\ell_1/\ell_2}\;.\]
After rearranging some terms,
\[ \frac{1}{2}\|\bA \vc(\widehat\bB^\top-\bB^\star{}^\top)\|_2^2 \leq  \left\langle Y  - \bA \vc(\bB^\star{}^\top),\bA \vc(\widehat\bB^\top-\bB^\star)\right\rangle + \lambda \left(\| \bB^\star\|_{\ell_1/\ell_2}-\| \widehat\bB\|_{\ell_1/\ell_2} \right)\;.\]
Let $S_0\subset[I]$ indicate the row support of $\bB^\star$, with $|S|\leq I_0$. Write $\Delta=\widehat\bB-\bB^\star$. Then standard arguments show that
\[\| \bB^\star\|_{\ell_1/\ell_2}-\| \widehat\bB\|_{\ell_1/\ell_2} \leq \|\Delta_{S_0}\|_{\ell_1/\ell_2} - \|\Delta_{S_0^c}\|_{\ell_1/\ell_2} \;.\]
Furthermore,
\begin{align*}
\left\langle Y  - \bA \vc(\bB^\star{}^\top),\bA \vc(\widehat\bB^\top-\bB^\star{}^\top)\right\rangle 
&=\left\langle \bA^\top\left(Y  - \bA \vc(\bB^\star{}^\top)\right), \vc(\Delta^\top)\right\rangle \\
&=\sum_i \left\langle \bA_i^\top\left(Y  - \bA \vc(\bB^\star{}^\top)\right), \Delta_{i,*}\right\rangle\\
&\leq \sum_i \left( \left\| \bA_i^\top\left(Y  - \bA \vc(\bB^\star{}^\top)\right)\right\|_2\cdot \|\Delta_{i,*}\|_2\right)\\
&\leq \max_i  \left\| \bA_i^\top\left(Y  - \bA \vc(\bB^\star{}^\top)\right)\right\|_2\cdot \|\Delta\|_{\ell_1/\ell_2}\;,
\end{align*}
where $\bA_i$ is the matrix with entries $(\bA_i)_{nm,j} = \bA_{nm,ij}=X_{ni}\cdot e_j(t_{nm})$.

Consider the event $\lambda \geq 2 \max_i  \left\| \bA_i^\top\left(Y  - \bA \vc(\bB^\star{}^\top)\right)\right\|_2$. On this event, we would then have
\begin{align*}
 \frac{1}{2}\|\bA \vc(\Delta^\top)\|^2
 & \leq  \norm{\Delta}_{\ell_1/\ell_2}\cdot\max_i  \left\| \bA_i^\top\left(Y  - \bA \vc(\bB^\star{}^\top)\right)\right\|_2 + \lambda \left(\|\Delta_{S_0}\|_{\ell_1/\ell_2} - \|\Delta_{S_0^c}\|_{\ell_1/\ell_2} \right)\\ &\leq \|\Delta_{S_0}\|_{\ell_1/\ell_2}\cdot 1.5\lambda - \|\Delta_{S_0^c}\|_{\ell_1/\ell_2}\cdot 0.5\lambda \;.\end{align*}
Since the left-hand side is nonnegative, therefore, $ \|\Delta_{S_0^c}\|_{\ell_1/\ell_2}\leq 3  \|\Delta_{S_0}\|_{\ell_1/\ell_2}$. Applying the $\mathsf{RE}_{\mathsf{group}}(I_0,\alpha)$ condition, then,
\[\|\bA \vc(\Delta^\top)\|^2\geq N\alpha\norm{\Delta}^2_{\mathsf{F}}\;.\]
Combining everything,
\[\frac{1}{2}N\alpha\norm{\Delta}^2_{\mathsf{F}}\leq  \frac{1}{2}\|\bA \vc(\Delta^\top)\|^2
\leq \|\Delta_{S_0}\|_{\ell_1/\ell_2}\cdot 1.5\lambda - \|\Delta_{S_0^c}\|_{\ell_1/\ell_2}\cdot 0.5\lambda \leq \|\Delta_{S_0}\|_{\ell_1/\ell_2}\cdot 1.5\lambda \;.\] 
For the Frobenius norm result, we further write
\[ \|\Delta_{S_0}\|_{\ell_1/\ell_2}\cdot 1.5\lambda  \leq  \sqrt{|S_0|}\cdot \|\Delta_{S_0}\|_{\mathsf{F}}\cdot 1.5\lambda \leq  \sqrt{|S_0|}\cdot \|\Delta\|_{\mathsf{F}}\cdot 1.5\lambda \]
and so
\[\|\Delta\|_{\mathsf{F}}\leq \sqrt{|S_0|}\cdot 1.5\lambda\cdot \frac{2}{N\alpha}\]
while for the $\ell_1/\ell_2$-norm result we have
\[ \|\Delta\|_{\ell_1/\ell_2} =  \|\Delta_{S_0}\|_{\ell_1/\ell_2} +  \|\Delta_{S_0^c}\|_{\ell_1/\ell_2}\leq 4 \|\Delta_{S_0}\|_{\ell_1/\ell_2} \leq 4\sqrt{|S_0|} \|\Delta_{S_0}\|_{\mathsf{F}}\leq 4\sqrt{|S_0|}\|\Delta\|_{\mathsf{F}}\;.\]
This proves the theorem, as long as we can show that with high probability,
\[\lambda \geq 2 \max_i  \left\| \bA_i^\top\left(Y  - \bA \vc(\bB^\star{}^\top)\right)\right\|_2\;.\]
To prove this, take any $i\in[I]$. Then
\[
\|\bA_i^\top \left(Y  - \bA \vc(\bB^\star{}^\top)\right)\|_2
=\|\bA_i^\top \left(T + \mathbf{\epsilon}\right)\|_2 \leq \|\bA_i^\top T\|_2 + \|\bA_i^\top\mathbf{\epsilon}\|_2\;,\]
where $T$ is the truncation error defined earlier while $\mathbf\epsilon$ has entries $\mathbf{\epsilon}_{nm}=\epsilon_n(t_{nm})$. Now we bound the two pieces separately.
First, we bound the operator norm of $\bA_i$. By definition, $(\bA_i)_{nm,j}=X_{ni}\cdot e_j(t_{nm})$ and so we have
\[\bA_i^\top\bA_i = \sum_n X_{ni}^2 \bE_n^\top \bE_n \preceq \norm{X}_{\infty}^2 \sum_n \bE_n^\top \bE_n = \norm{X}_{\infty}^2 \cdot N \cdot \bF\;,\]
proving that $\|\bA_i\|_{\mathsf{op}}\leq \norm{X}_{\infty}\cdot\sqrt{N\|\bF\|_{\mathsf{op}}}$.
Then
\[ \|\bA_i^\top T\|_2 \leq  \|\bA_i\|_{\mathsf{op}}\cdot \|T\|_2\leq \norm{X}_{\infty}\cdot\sqrt{N\|\bF\|_{\mathsf{op}}}\cdot \|T\|_2\;.\]
Next, to bound
$ \|\bA_i^\top\mathbf{\epsilon}\|_2$, observe that $\bA_i^\top\mathbf{\epsilon}\sim N(0,\bA_i^\top\Sigma\bA_i)$, where
where $\Sigma$ is a block-diagonal matrix with blocks given by $\Sigma_1,\dots,\Sigma_N$.
Then, applying \cite[Proposition 1.1]{hsu2011tail}, 
\[\mathbb{P}\left\{\|\bA_i^\top\mathbf{\epsilon}\|^2_2>\trace(\bA_i^\top \Sigma\bA_i) + 2\sqrt{\trace((\bA_i^\top \Sigma\bA_i)^2)\cdot t} + 2\|\bA_i^\top \Sigma\bA_i\|_{\mathsf{op}}\cdot t\right\}\leq e^{-t}\]
for any $t\geq 0$. Since $\trace(M)\leq J\|M\|_{\mathsf{op}}$ for any $J\times J$ matrix, and using the simple identity $2\sqrt{Jt}\leq J+t$, we simplify this to the weaker statement
\[\mathbb{P}\left\{\|\bA_i^\top\mathbf{\epsilon}\|^2_2>\|\bA_i^\top \Sigma\bA_i\|_{\mathsf{op}}\cdot (2J+3t)\right\}\leq e^{-t}\]
We calculate 
\[\|\bA_i^\top \Sigma \bA_i\|_{\mathsf{op}}\leq \|\bA_i\|_{\mathsf{op}}^2 \cdot\|\Sigma\|_{\mathsf{op}} \leq \norm{X}^2_{\infty}\cdot N\|\bF\|_{\mathsf{op}}\cdot \max_n \|\Sigma_n\|_{\mathsf{op}}\;.\]
Combining everything, and setting $t=\log(I/\delta)$, for each $I$ we have
\[\mathbb{P}\left\{\|\bA_i^\top\mathbf{\epsilon}\|^2_2> \norm{X}^2_{\infty}\cdot N\|\bF\|_{\mathsf{op}}\cdot \max_n \|\Sigma_n\|_{\mathsf{op}}\cdot (2J + 3\log(I/\delta))\right\}\leq \frac{\delta}{I}\;.\]
Therefore, with probability at least $1-\delta$, for all $i=1,\dots, I$,
\[\|\bA_i^\top\mathbf{\epsilon}\|^2_2\leq \norm{X}^2_{\infty}\cdot N\|\bF\|_{\mathsf{op}}\cdot \max_n \|\Sigma_n\|_{\mathsf{op}}\cdot(2J+3\log(I/\delta))\]
and so for all $i$,
\begin{align*}
\|\bA_i^\top \left(Y  - \bA \vc(\bB^\star{}^\top)\right)\|_2
&\leq \|\bA_i^\top T\|_2 + \|\bA_i^\top\mathbf{\epsilon}\|_2\\
&\leq \norm{X}_{\infty} \sqrt{N\|\bF\|_{\mathsf{op}}}\cdot\left(\|T\|_2+\sqrt{\max_n \|\Sigma_n\|_{\mathsf{op}}\cdot(2J+3\log(I/\delta))}\right)\\& \leq \lambda/2 \;,\end{align*}
as desired.

\subsection{Proof of Theorem~\ref{t:RE_gaussian}}
We use the Transfer Principle of \citet[Lemma 5.1]{oliveira2013lower}, which connects sparse eigenvalues of $\bA$ to a restricted eigenvalue property for $\bA$. This result is stated for sparsity but can be straightforwardly extended to group-sparsity. We restate the group-sparse form here as a Lemma without proof.
\begin{lemma}[{Adapted from \citet[Lemma 5.1]{oliveira2013lower}}]\label{lem:oliviera}
Suppose $\bA$ satisfies that, for any $k_1$-row-sparse $\bW\in\mbR^{I\times J}$,
\begin{equation}\label{eqn:oliviera_assump}\norm{\bA \vc(\bW^\top) }^2_2\geq \lambda_0\norm{\bW}^2_{\mathsf{F}}\;.\end{equation}
 Then
for all $W\in\mbR^{I\times J}$,
\[\norm{\bA \vc(\bW^\top) }^2_2 \geq\lambda_0\norm{\bW}^2_{\mathsf{F}}  - \frac{\max_i\norm{\bA_i}_{\mathsf{op}}^2\cdot \norm{\bW}_{\ell_1/\ell_2}^2}{k_1-1}\;.\]
\end{lemma}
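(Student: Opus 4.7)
The plan is to prove the transfer principle via a Maurey-style empirical approximation: from the arbitrary $\bW$, construct a random matrix $\bZ$ that is $k_1$-row-sparse almost surely with $\mathbb{E}\bZ = \bW$, then apply the sparse hypothesis~\eqref{eqn:oliviera_assump} to $\bZ$ and transfer the bound back to $\bW$ using linearity and independence.

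For the construction, let $p_i = \|\bW_{i*}\|_2 / \|\bW\|_{\ell_1/\ell_2}$ for each row index with $\bW_{i*}\neq 0$, and let $\boldsymbol{\xi}$ be a random element of $\mbR^{I\times J}$ that, with probability $p_i$, takes the value $(\|\bW\|_{\ell_1/\ell_2}/\|\bW_{i*}\|_2)\,\bW_{i*}$ in row $i$ and zero in every other row. A direct check gives $\mathbb{E}\boldsymbol{\xi} = \bW$, $\|\boldsymbol{\xi}\|_{\mathsf{F}}^2 = \|\bW\|_{\ell_1/\ell_2}^2$ almost surely, and, since only one row of $\boldsymbol{\xi}$ is nonzero,
\[
\|\bA \vc(\boldsymbol{\xi}^\top)\|_2^2 \leq \max_i \|\bA_i\|_{\mathsf{op}}^2 \cdot \|\bW\|_{\ell_1/\ell_2}^2 \quad \text{almost surely.}
\]
Now draw $\boldsymbol{\xi}_1,\dots,\boldsymbol{\xi}_{k_1}$ as iid copies of $\boldsymbol{\xi}$ and set $\bZ = \tfrac{1}{k_1}\sum_{j=1}^{k_1}\boldsymbol{\xi}_j$; this has at most $k_1$ nonzero rows almost surely, so the hypothesis applies.

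The key step is the second-moment computation. Expanding the quadratic forms and separating the $j = l$ and $j\neq l$ contributions (using $\mathbb{E}\boldsymbol{\xi}_j = \bW$ together with independence of distinct $\boldsymbol{\xi}_j$'s),
\begin{align*}
\mathbb{E}\|\bZ\|_{\mathsf{F}}^2 &= \tfrac{1}{k_1}\|\bW\|_{\ell_1/\ell_2}^2 + \tfrac{k_1-1}{k_1}\|\bW\|_{\mathsf{F}}^2,\\
\mathbb{E}\|\bA\vc(\bZ^\top)\|_2^2 &= \tfrac{1}{k_1}\mathbb{E}\|\bA\vc(\boldsymbol{\xi}^\top)\|_2^2 + \tfrac{k_1-1}{k_1}\|\bA\vc(\bW^\top)\|_2^2.
\end{align*}
Applying~\eqref{eqn:oliviera_assump} pointwise to $\bZ$ and taking expectations gives $\mathbb{E}\|\bA\vc(\bZ^\top)\|_2^2 \geq \lambda_0\,\mathbb{E}\|\bZ\|_{\mathsf{F}}^2$. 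Substituting the two moment identities, discarding the positive term $\lambda_0\|\bW\|_{\ell_1/\ell_2}^2/k_1$ on the right, multiplying through by $k_1/(k_1-1)$, and invoking the uniform bound on $\mathbb{E}\|\bA\vc(\boldsymbol{\xi}^\top)\|_2^2$ yields the stated inequality.

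The main obstacle is the moment-expansion bookkeeping: verifying that the cross terms $\mathbb{E}\langle \boldsymbol{\xi}_j,\boldsymbol{\xi}_l\rangle_{\mathsf{F}}$ and $\mathbb{E}\langle \bA\vc(\boldsymbol{\xi}_j^\top),\bA\vc(\boldsymbol{\xi}_l^\top)\rangle$ for $j\neq l$ collapse by independence to $\|\bW\|_{\mathsf{F}}^2$ and $\|\bA\vc(\bW^\top)\|_2^2$ respectively, while the diagonal $j=l$ contributions are controlled uniformly by the column-block operator norms $\|\bA_i\|_{\mathsf{op}}^2$. I would also record that the $k_1 = 1$ case is excluded by the denominator in the conclusion, and that the extension from the ungrouped Oliveira argument ($J=1$) to the grouped setting amounts precisely to replacing scalar magnitudes by row $\ell_2$-norms throughout the randomization.
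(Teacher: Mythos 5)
Your proof is correct and is essentially the argument the paper intends: the paper states this lemma without proof, deferring to \citet[Lemma 5.1]{oliveira2013lower}, whose Maurey-style randomization (a single nonzero coordinate drawn with probability proportional to its magnitude, averaged over $k_1$ iid copies, with the hypothesis applied pointwise and expectations taken) you have faithfully transferred to the grouped setting by replacing scalar magnitudes with row $\ell_2$-norms throughout. The construction, the two moment identities, and the final rearrangement (dropping the nonnegative $\lambda_0\norm{\bW}_{\ell_1/\ell_2}^2/k_1$ term and scaling by $k_1/(k_1-1)$) all check out.
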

To apply this result to our work, we need to first find a $\lambda_0$ such that \eqref{eqn:oliviera_assump} is satisfied (with high probability), and then we need to compute a bound (holding with high probability) on each $\norm{\bA_i}_{\mathsf{op}}$.

\paragraph{Step 1: finding $\lambda_0$ for condition \eqref{eqn:oliviera_assump}}

We first give another lemma, proved below:
\begin{lemma}\label{lem:RIP}
Suppose that the assumptions of Theorem~\ref{t:RE_gaussian} hold, and take any fixed sequence $\{\bE_n\}$ satisfying \eqref{e:Econd}. Choose any $\delta>0$ and any $k_1\geq 1$. Then
there are constants $a_1,a_2>0$ depending only on $\nu,\lambda_{\min}(\Sigma),\gamma_0,\gamma_1$, such that if 
\begin{equation}\label{eqn:N_RIP}N \geq a_1 (k_1 J+k_1\log(I)+\log(1/\delta)),\end{equation}
then with probability at least $1-\delta/2$, for all $k_1$-row-sparse $\bW$,
\[\frac{\sum_n\norm{\bE_n^\top \bW^\top X_n}_2}{N\sqrt{\trace(\bW^\top\Sigma \bW)}} \geq a_2\;.\]
\end{lemma}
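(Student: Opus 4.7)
My plan follows the standard template for restricted-eigenvalue results under subgaussian designs: a per-$\bW$ expectation lower bound, a per-$\bW$ concentration statement, and a uniformization step combining a Frobenius net on $\bW$ with a union bound over supports. By homogeneity I may restrict to $\bW$ with row support in a fixed $S$ of size $\leq k_1$ and with $\trace(\bW^\top\Sigma\bW)=1$; writing $\Sigma_W:=\bW^\top\Sigma\bW$ and $Y_n:=\bW^\top X_n$, the target becomes a lower bound of order $N$ on $\sum_n\|\bE_n Y_n\|_2$.

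\emph{Step 1 (expectation).} In the Gaussian case $Y_n\sim N(0,\Sigma_W)$, so $\|\bE_n Y_n\|_2^2 = g_n^\top M_n g_n$ with $M_n:=\Sigma_W^{1/2}\bE_n^\top\bE_n\Sigma_W^{1/2}$ positive semidefinite and $g_n\sim N(0,I)$. Since $\mathrm{Var}(g_n^\top M_n g_n)=2\trace(M_n^2)\leq 2(\trace(M_n))^2$, a Paley--Zygmund computation gives $\mathbb{E}\|\bE_n Y_n\|_2\geq c_0\sqrt{\trace(M_n)}=c_0\|\bE_n\Sigma_W^{1/2}\|_{\mathsf{F}}$ for an absolute $c_0>0$, and the subgaussian case follows from a Hanson--Wright type second moment bound combined with the same Paley--Zygmund step (with constants now depending on $\nu$). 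The critical move is to combine the vectors coordinatewise before taking a single outer norm: writing $\Sigma_W=\sum_k\sigma_k u_k u_k^\top$ and $c_n:=(\sqrt{\sigma_k}\,\|\bE_n u_k\|_2)_k\in\mbR^J$, one has $\|c_n\|_2=\|\bE_n\Sigma_W^{1/2}\|_{\mathsf{F}}$, so by the triangle inequality $\sum_n\|c_n\|_2\geq\|\sum_n c_n\|_2$; the $k$-th coordinate of $\sum_n c_n$ equals $\sqrt{\sigma_k}\sum_n\|\bE_n u_k\|_2\geq \gamma_0 N\sqrt{\sigma_k}$ by \eqref{e:Econd} applied to the unit vector $u_k$, whence $\|\sum_n c_n\|_2\geq\gamma_0 N\sqrt{\sum_k\sigma_k}=\gamma_0 N$. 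Therefore $\sum_n\mathbb{E}\|\bE_n Y_n\|_2\geq c_0\gamma_0 N$ under the normalization.

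\emph{Steps 2 and 3 (concentration and uniformization).} Viewing $F(X):=\sum_n\|\bE_n\bW^\top X_n\|_2$ as a function of $g=(g_1,\dots,g_N)$ via $X_n=\Sigma^{1/2}g_n$, its Lipschitz constant is at most $\sqrt{\sum_n\|\bE_n\bW^\top\Sigma^{1/2}\|_{\mathsf{op}}^2}\leq\sqrt{N\,\trace(\Sigma_W\bF)}\leq\gamma_1\sqrt{N}$ by the upper bound in \eqref{e:Econd} and our normalization. Gaussian Lipschitz concentration then gives $\mathbb{P}\{F\leq\tfrac12\mathbb{E}F\}\leq e^{-c_1 N}$ for some $c_1>0$; for subgaussian $X_n$ the same scaling holds via a standard concentration bound for sums of independent subgaussian Lipschitz functionals. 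To make this uniform, note that there are $\binom{I}{k_1}\leq I^{k_1}$ supports, and for each, the normalized feasible set is a compact ellipsoid of Frobenius diameter $O(1/\sqrt{\lambda_{\min}(\Sigma)})$ inside a $k_1 J$-dimensional subspace. Cover it by a Frobenius $\epsilon$-net $\mathcal{N}_S$ of size $(C/\epsilon)^{k_1 J}$, apply Step 2, and union bound over $\bigcup_S\mathcal{N}_S$; the discretization error is absorbed by controlling the Lipschitz constant of $\bW\mapsto F(X)/N$ in Frobenius norm, uniformly in $X$ off a further exponentially small event (a crude operator-norm bound on the block $(X_n)_{n\in[N]}$ suffices). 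Choosing $\epsilon$ a small absolute constant yields exactly the sample-size requirement~\eqref{eqn:N_RIP}.

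The main obstacle is the expectation bound in Step 1: the naive application of \eqref{e:Econd} to a single direction (say, the leading eigenvector of $\Sigma_W$) would yield only $\sum_n\mathbb{E}\|\bE_n Y_n\|_2\gtrsim\gamma_0 N\sqrt{\|\Sigma_W\|_{\mathsf{op}}}$, losing a factor of up to $\sqrt{\mathrm{rank}(\Sigma_W)}\leq\sqrt{\min(k_1,J)}$ against the desired $\sqrt{\trace(\Sigma_W)}$. The eigendirection-wise aggregation trick above is what restores the trace scaling, and hence what ties the final exponent to $k_1 J+k_1\log I+\log(1/\delta)$ rather than a larger quantity. A secondary subtlety is the Lipschitz-net step: the constraint $\trace(\bW^\top\Sigma\bW)=1$ interacts with the lower eigenvalue of $\Sigma$, which is why the constants $a_1,a_2$ must depend on $\lambda_{\min}(\Sigma)$ in addition to $\nu,\gamma_0,\gamma_1$.
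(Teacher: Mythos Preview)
Your overall architecture (per-$\bW$ concentration, covering, union bound) matches the paper's, and your Step~1 is a clean alternative: you aggregate across the eigendirections of $\Sigma_W=\bW^\top\Sigma\bW$ via the vectors $c_n$, whereas the paper aggregates across the rows of $\Sigma^{1/2}\bW$ using the Cauchy--Schwarz inequality $\sqrt{\sum_i a_i^2}\geq \sum_i b_i a_i/\|b\|_2$. Both reach the same $\gamma_0 N$ lower bound on $\sum_n\|\bE_n\Sigma_W^{1/2}\|_{\mathsf{F}}$. In Step~2 the paper does not use Gaussian Lipschitz concentration; it instead proves an elementary lemma (Lemma~\ref{lem:subgaussian_norm_sum}) showing that if $\sum_n\|B_n\|_{\mathsf{F}}\gtrsim N$ and $\sum_n\|B_n\|_{\mathsf{F}}^2\lesssim N$ then $\sum_n\|B_nX_n\|_2$ concentrates two-sidedly, via a pigeonhole argument (Lemma~\ref{lem:proportion_interval}) and moment bounds (Lemma~\ref{lem:nonneg_subg}). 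This handles the subgaussian case directly, whereas your subgaussian extension of Lipschitz concentration is only asserted.

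The substantive gap is in Step~3. You claim that the discretization error is absorbed by a Lipschitz bound on $\bW\mapsto F(X)/N$ that holds uniformly in $X$ from ``a crude operator-norm bound on the block $(X_n)_{n\in[N]}$,'' with $\epsilon$ an absolute constant. This does not go through for general $\{\bE_n\}$ satisfying~\eqref{e:Econd}: the assumption only controls the average $\bF=\frac{1}{N}\sum_n\bE_n^\top\bE_n$, not individual $\bE_n$, so for $V=\bW-\bW'$ with row support $S$ the quantity $\sum_n\|\bE_n V^\top X_n\|_2$ cannot be bounded by $C\,N\,\|V\|_{\mathsf F}$ via $\|\bX_S\|_{\mathsf{op}}$ alone. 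The best such crude bound picks up an extra $\sqrt{k_1}$ (e.g.\ through $\|\bX_S\|_{\mathsf F}\asymp\sqrt{Nk_1}$ after Cauchy--Schwarz), forcing $\epsilon\sim 1/\sqrt{k_1}$ and yielding $k_1J\log k_1$ rather than $k_1J$ in the sample-size requirement. The paper sidesteps this entirely: it proves a \emph{two-sided} concentration bound $c_1\leq F_\bW(X)/N\leq c_2$ on each net point and then invokes the seminorm covering lemma of \citet{baraniuk2008simple} (Lemma~\ref{lem:covering}), which gives $\inf_{\mathcal W}F\geq \inf_{\mathcal W'}F-\tfrac{\epsilon}{1-\epsilon}\sup_{\mathcal W'}F$. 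The upper bound on the net is exactly what replaces your uniform Lipschitz estimate and allows $\epsilon$ to be a constant depending only on $c_1,c_2$. Your argument is easily repaired by adding this upper-tail bound on the net and invoking the same covering lemma, but as written the claim that $\epsilon$ can be absolute is not justified.
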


To apply these results to our work, first note that 
by Lemma~\ref{lem:RIP}, if $N$ satisfies \eqref{eqn:N_RIP}, then with probability at least $1-\delta/2$, for all $k_1$-sparse $\bW$ (where we specify $k_1$ later), 
\begin{align*}
\norm{A\cdot\mathsf{vec}(\bW)}^2_2  = \sum_n\norm{\bE_n^\top \bW^\top X_n}^2_2 & \geq \frac{1}{N}\left(\sum_n\norm{\bE_n^\top \bW^\top X_n}_2\right)^2 \\
& \geq N \trace(\bW^\top\Sigma \bW)\cdot (a_2)^2\;.
\end{align*}
In particular, the assumption \eqref{eqn:oliviera_assump} in Lemma~\ref{lem:oliviera} holds with $\lambda_0 = N (a_2)^2\lambda_{\min}(\Sigma)$.

\paragraph{Step 2: bounding $\norm{\bA_i}_{\mathsf{op}}$}
Next we bound $\max_i\norm{\bA_i}^2_{\mathsf{op}}$. From the proof of Theorem~\ref{thm:main_sparse}, we know that
$\norm{\bA_i}^2_{\mathsf{op}}\leq \norm{X}^2_{\infty}\cdot N\norm{\bF}_{\mathsf{op}}$, where $\bF=\frac{1}{N}\sum_n \bE_n^\top\bE_n$. By \eqref{e:Econd}, we see that $\norm{\bF}_{\mathsf{op}}\leq \gamma_1^2$.
Furthermore, $X_{ij}$ is mean zero and $\nu$-subgaussian, and so, by standard subgaussian tail bounds, with probability at least $1-\delta/2$,
\[\norm{X}_{\infty}=\max_{ni}|X_{ni}|\leq \nu\sqrt{2\log(4IJ/\delta)}\;.\]

\paragraph{Step 3: applying Lemma~\ref{lem:oliviera}}
Combining the results from Step 1 and Step 2, and applying Lemma~\ref{lem:oliviera}, we see that with probability at least $1-\delta$, for all $\bW\in\mbR^{I\times J}$,
\[\norm{\bA \vc(\bW^\top) }^2_2 \geq N\left[(a_1)^2\lambda_{\min}(\Sigma)\cdot \norm{\bW}^2_{\mathsf{F}}  - \frac{2\nu^2\log(4IJ/\delta)\cdot \gamma_1^2\cdot \norm{\bW}_{\ell_1/\ell_2}^2}{k_1-1}\right]\;.\]
For any $\bW$ with $\norm{\bW_{S^c}}_{\ell_1/\ell_2}\leq 3\norm{\bW_S}_{\ell_1/\ell_2}$ (where $|S|\leq k$), we then have
\[\norm{\bW}_{\ell_1/\ell_2} \leq 4\norm{\bW_S}_{\ell_1/\ell_2}\leq 4\sqrt{k}\norm{\bW_S}_{\mathsf{F}}\leq 4\sqrt{k}\norm{\bW}_{\mathsf{F}}\;,\]
and so the result above gives
\[\norm{\bA \vc(\bW^\top) }^2_2 \geq N\norm{\bW}^2_{\mathsf{F}}\left[(a_2)^2\lambda_{\min}(\Sigma)  - \frac{2\nu^2\log(4IJ/\delta)\cdot \gamma_1^2\cdot 16k}{k_1-1}\right]\;.\]
Taking
\[k_1 \geq 1 + \frac{2\nu^2\log(4IJ/\delta)\cdot \gamma_1^2\cdot 16k}{0.5(a_2)^2\lambda_{\min}(\Sigma)}\;,\]
we obtain
\[\norm{\bA \vc(\bW^\top) }^2_2 \geq N\norm{\bW}^2_{\mathsf{F}}\cdot 0.5(a_2)^2\lambda_{\min}(\Sigma)\]
for all $\bW\in\mbR^{I\times J}$ with $\norm{\bW_{S^c}}_{\ell_1/\ell_2}\leq 3\norm{\bW_S}_{\ell_1/\ell_2}$  for any $|S|\leq k$, as desired. The sample size requirement \eqref{eqn:N_RIP_thm} in Theorem~\ref{t:RE_gaussian} ensures that the sample size assumption in Lemma~\ref{lem:RIP} will hold for the specified choice of $k_1$.

\subsubsection{Proof of Lemma~\ref{lem:RIP}}

For the proof of this lemma, we'll need several supporting lemmas, proved below.
The first lemma shows that $\sum_n \norm{\bE_n^\top \bW^\top X_n}_2$ satisfies
upper and lower bounds with high probability for any fixed {\em single} matrix $\bW$.
\begin{lemma}\label{lem:Wfixed_highprob}
For any fixed $\bW$,
\[\mathbb{P}\left\{c_1 \leq \frac{\sum_n \norm{\bE_n^\top \bW^\top X_n}_2}{N\sqrt{\trace(\bW^\top\Sigma \bW)}}\leq c_2\right\} \geq 1 - 2e^{-c_3N}\;,\]
where $c_1,c_2,c_3>0$ depend only on $\nu,\lambda_{\min}(\Sigma),\gamma_0,\gamma_1$.
\end{lemma}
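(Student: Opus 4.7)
Write $\tau := \sqrt{\trace(\bW^\top\Sigma\bW)}$ and $Z_n := \bE_n\bW^\top X_n$ (the transpose on the first $\bE_n$ in the statement is a dimension-incompatible typo: the computation of $\norm{\bA\vc(\bW^\top)}^2_2$ in the proof of Lemma~\ref{lem:RIP} confirms that the summands are $\norm{\bE_n\bW^\top X_n}_2$). My plan is to sandwich $\mathbb{E}\sum_n\norm{Z_n}_2$ between absolute multiples of $N\tau$, then establish subgaussian concentration of the sum on the scale $\sqrt{N}\tau$; choosing the deviation to be half the lower mean bound yields the claimed two-sided inequality with probability $1-2e^{-c_3 N}$.

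For the \emph{upper} bound on the mean, Jensen plus $\sum_n\bE_n^\top\bE_n = N\bF$ and Cauchy--Schwarz yield $\sum_n\mathbb{E}\norm{Z_n}_2 \leq \sum_n \sqrt{\trace(\bE_n\bW^\top\Sigma\bW\bE_n^\top)} \leq \sqrt{N^2\trace(\bF\bW^\top\Sigma\bW)} \leq \gamma_1 N\tau$, using $\norm{\bF}_{\mathsf{op}}\leq\gamma_1^2$ from~\eqref{e:Econd}. For the \emph{lower} bound, a Paley--Zygmund argument on the quadratic form $\norm{Z_n}_2^2 = X_n^\top\bW\bE_n^\top\bE_n\bW^\top X_n$, combined with Hanson--Wright control of the fourth moment and the fact that $\lambda_{\min}(\Sigma)>0$ bounds $\trace(M\Sigma)^2 / \norm{M}_{\mathsf{F}}^2$ from below (for PSD $M$), yields $\mathbb{E}\norm{Z_n}_2 \geq c_0\sqrt{\trace(\bE_n\bW^\top\Sigma\bW\bE_n^\top)}$ for some $c_0=c_0(\nu,\lambda_{\min}(\Sigma))$. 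Diagonalising $\bW^\top\Sigma\bW = \sum_j\mu_j v_j v_j^\top$ in an orthonormal eigenbasis and applying Minkowski's inequality to the $\mathbb{R}^J$-valued vectors with entries $\sqrt{\mu_j}\norm{\bE_n v_j}_2$,
\[
\sum_n\sqrt{\sum_j\mu_j\norm{\bE_n v_j}_2^2} \geq \sqrt{\sum_j \mu_j\Bigl(\sum_n\norm{\bE_n v_j}_2\Bigr)^2} \geq N\gamma_0\sqrt{\sum_j\mu_j} = N\gamma_0\tau,
\]
where the second inequality applies~\eqref{e:Econd} to each unit eigenvector $v_j$. Consequently $\mathbb{E}\sum_n\norm{Z_n}_2 \in [c_0\gamma_0 N\tau,\ \gamma_1 N\tau]$.

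For the concentration step in the Gaussian case, write $X_n = \Sigma^{1/2}G_n$ and view the map $(G_1,\dots,G_N)\mapsto \sum_n\norm{\bE_n\bW^\top\Sigma^{1/2}G_n}_2$ as a Lipschitz function of the standard Gaussian $(G_n)$. Using $\norm{\cdot}_{\mathsf{op}}\leq\norm{\cdot}_{\mathsf{F}}$ summand by summand, its squared Lipschitz constant is at most
\[
\sum_n\norm{\bE_n\bW^\top\Sigma^{1/2}}_{\mathsf{F}}^2 = N\trace(\bF\bW^\top\Sigma\bW) \leq N\gamma_1^2\tau^2.
\]
Tsirelson--Ibragimov--Sudakov then delivers a subgaussian tail with variance proxy $N\gamma_1^2\tau^2$; taking a deviation of $c_0\gamma_0 N\tau/2$ pins each tail at probability $\leq e^{-c_3 N}$ with $c_3 = c_0^2\gamma_0^2/(8\gamma_1^2)$, and combining with the mean sandwich gives $c_1 = c_0\gamma_0/2$ and $c_2 = \gamma_1+c_0\gamma_0/2$.

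The main obstacle is the genuinely subgaussian (non-Gaussian) regime, where Gaussian Lipschitz concentration does not transfer directly. The plan there is to apply Hanson--Wright to each $\norm{Z_n}_2^2$ in turn (giving sub-exponential concentration of each summand with variance proxy $\nu^4\norm{\bW\bE_n^\top\bE_n\bW^\top}_{\mathsf{F}}^2$), pass to concentration of $\norm{Z_n}_2$ about its mean via the elementary identity $\norm{Z_n}_2-\mu_n = (\norm{Z_n}_2^2-\mu_n^2)/(\norm{Z_n}_2+\mu_n)$, and then aggregate across $n$ using a Bernstein inequality for independent sub-exponential terms. The scaling in $N$ is preserved; only the constants pick up additional dependence on $\nu$, which the statement permits.
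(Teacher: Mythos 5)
Your mean sandwich is correct (and your reading of the transpose typo is right: the summands are $\norm{\bE_n\bW^\top X_n}_2$), and for the \emph{Gaussian} case your argument is complete and genuinely different from the paper's: Paley--Zygmund plus the Minkowski/eigenbasis step gives $\mathbb{E}\sum_n\norm{\bE_n\bW^\top X_n}_2\in[c_0\gamma_0 N\tau,\gamma_1 N\tau]$, and Gaussian Lipschitz concentration with squared Lipschitz constant $\sum_n\norm{\bE_n\bW^\top\Sigma^{1/2}}_{\mathsf{F}}^2\leq N\gamma_1^2\tau^2$ finishes it. That is arguably cleaner than the paper's route, which never centers at the mean: it normalizes each summand to $Z_n=\norm{B_nX_n}_2/(2\sigma\norm{B_n}_{\mathsf{F}})$, checks a uniform lower bound on $\mathbb{E}[Z_n^2]$ and a uniform upper bound on $\mathbb{E}[e^{Z_n^2}]$ (via the Hsu--Kakade--Zhang upper-tail MGF bound), and then runs a bespoke one-sided MGF argument for sums of non-negative variables (Lemma~\ref{lem:nonneg_subg}) on the subset of indices where $\norm{B_n}_{\mathsf{F}}$ is bounded below (Lemma~\ref{lem:proportion_interval}).

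The gap is in your plan for the general $\nu$-subgaussian case, which the lemma must cover since Theorem~\ref{t:RE_gaussian} assumes only $\mathbb{E}[e^{v^\top X_n}]\leq e^{\nu^2\norm{v}_2^2/2}$. First, Hanson--Wright in its two-sided form requires independent coordinates (or a convex-concentration property); the MGF condition alone yields only the \emph{upper} tail of $X_n^\top M X_n$ (this is exactly what \citet{hsu2011tail} provide and what the paper uses). Second, your per-summand step --- converting concentration of $\norm{Z_n}_2^2$ into concentration of $\norm{Z_n}_2$ about its mean via division by $\norm{Z_n}_2+\mu_n$ --- is false at this level of generality: an MGF-subgaussian $X_n$ with covariance $\Sigma$ can satisfy $\mathbb{P}\{X_n=0\}=1/2$, so an individual $\norm{Z_n}_2$ need not concentrate at all. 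Third, even granting a $\psi_1$ bound $K_n\lesssim\nu^2\norm{B_n}_{\mathsf{F}}$ on the centered summands, Bernstein aggregation gives a tail of order $\exp\bigl(-c\min\{t^2/\sum_nK_n^2,\;t/\max_nK_n\}\bigr)$, and since a single $\norm{B_n}_{\mathsf{F}}$ can be as large as $\gamma_1\sqrt{N}\tau$, the second term at deviation $t\asymp N\tau$ only yields $e^{-c\sqrt{N}}$, not $e^{-cN}$. The fix is to abandon per-summand two-sided concentration: the lower tail of a sum of independent non-negative variables needs only per-summand mean lower bounds and second-moment (or exponential-moment) upper bounds, fed into an $\mathbb{E}[e^{-tZ_n}]$ Chernoff argument restricted to the constant fraction of indices with $\norm{B_n}_{\mathsf{F}}\gtrsim\gamma_0\tau$ --- which is precisely the combination of Lemmas~\ref{lem:proportion_interval} and~\ref{lem:nonneg_subg} in the paper --- while the upper tail follows from Cauchy--Schwarz and the HKZ bound alone.
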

The next lemma shows that proving a lower bound on $\sum_n\norm{\bE_n^\top \bW^\top X_n}_2$ over all row-sparse $\bW$ can be reduced to proving a lower bound on a finite covering set. This result is a simple extension of \citet[Lemma 5.1]{baraniuk2008simple}, from the sparse setting to the group-sparse setting, and so we do not give the proof here.
\begin{lemma}[{Adapted from \citet[Lemma 5.1]{baraniuk2008simple}}]\label{lem:covering}
Let
\[\mathcal{W} = \left\{\bW\in\mbR^{I\times J}:\text{ $\bW$ is $k_1$-row-sparse}, \ \trace(\bW^\top\Sigma \bW)=1\right\}\;.\]
Let $f(\bW)$ be any seminorm (that is, $f(c\cdot \bW)=|c|\cdot f(\bW)$ and $f(\bW_1+\bW_2)\leq f(\bW_1)+f(\bW_2)$). Choose any $\epsilon\in(0,1/2)$. Then there exists $\mathcal{W}'\subset \mathcal{W}$ with $|\mathcal{W}'|\leq I^{k_1}(3/\epsilon)^{k_1J}$ such that
\[\inf_{\bW\in\mathcal{W}}f(\bW)\geq \inf_{\bW\in\mathcal{W}'}f(\bW) - \frac{\epsilon}{1-\epsilon}\sup_{\bW\in\mathcal{W}'}f(\bW)\;.\]
\end{lemma}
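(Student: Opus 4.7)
The plan is to follow the classical Baraniuk--Davenport--DeVore--Wakin covering-plus-geometric-series argument, but applied on the group-sparse unit ``sphere'' $\mathcal{W}$ defined by the weighted norm $\|\bW\|_\Sigma := \sqrt{\trace(\bW^\top \Sigma \bW)}$. First I would build $\mathcal{W}'$ subset by subset. For each support $S \subset \{1,\dots,I\}$ with $|S|=k_1$, let $\mathcal{W}_S \subset \mathcal{W}$ denote the matrices whose row-support lies in $S$; this is the unit $\|\cdot\|_\Sigma$-sphere of a $k_1J$-dimensional linear subspace. After the change of variables $\tilde \bW_S = \Sigma_{SS}^{1/2}\bW_S$ (restricting to the rows in $S$), $\mathcal{W}_S$ becomes the standard Euclidean unit sphere in $\mathbb{R}^{k_1 \times J}$, so a classical volume argument provides an $\epsilon$-net $\mathcal{W}'_S \subset \mathcal{W}_S$ of cardinality at most $(3/\epsilon)^{k_1 J}$. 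Taking $\mathcal{W}' = \bigcup_S \mathcal{W}'_S$ over the at most $\binom{I}{k_1}\leq I^{k_1}$ supports of size $k_1$ yields $|\mathcal{W}'| \leq I^{k_1}(3/\epsilon)^{k_1 J}$, as required.

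Second, for any $\bW \in \mathcal{W}$, I would inflate its support to some $S$ of size exactly $k_1$ (padding arbitrarily if needed) and then construct a telescoping decomposition entirely inside the subspace indexed by $S$. Choose $\bW_0' \in \mathcal{W}'_S$ minimizing $\|\bW - \bW_0'\|_\Sigma$; by the net property this distance is at most $\epsilon$. The residual $\bU_0 := \bW - \bW_0'$ still has row-support in $S$, so once rescaled it lies in $\mathcal{W}_S$ and admits its own nearest net point $\bW_1' \in \mathcal{W}'_S$ within $\|\cdot\|_\Sigma$-distance $\epsilon$. Iterating this procedure produces coefficients $\alpha_0 = 1$ and $|\alpha_j| \leq \epsilon^j$ for $j \geq 1$, together with points $\bW_j' \in \mathcal{W}'$, such that
\[
\bW = \sum_{j \geq 0} \alpha_j \bW_j',
\]
with the partial-sum residuals having $\|\cdot\|_\Sigma$-norms at most $\epsilon^{j+1}$, so the series converges to $\bW$ in the finite-dimensional subspace.

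Third, I apply the seminorm properties to the decomposition. From the triangle inequality and homogeneity,
\[
f(\bW) \geq f(\bW_0') - \sum_{j \geq 1} |\alpha_j|\, f(\bW_j') \geq \inf_{\bW' \in \mathcal{W}'} f(\bW') - \sup_{\bW' \in \mathcal{W}'} f(\bW') \cdot \sum_{j \geq 1} \epsilon^j,
\]
and evaluating the geometric series yields exactly $\frac{\epsilon}{1-\epsilon}\sup_{\mathcal{W}'} f$, which is finite since $\epsilon \in (0,1/2)$. Taking the infimum over $\bW \in \mathcal{W}$ gives the stated inequality.

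The main obstacles are minor bookkeeping issues rather than deep mathematics. The chief thing to get right is ensuring the recursive decomposition stays inside one fixed $k_1$-dimensional row-support $S$, so that each residual remains $k_1$-row-sparse and can be legitimately approximated by $\mathcal{W}'_S$; this is what prevents the covering count from blowing up to $(2k_1)$-sparse subspaces. The second subtlety is handling the degenerate cases (a residual that is exactly zero terminates the recursion, and $\bW_0'$ being taken from $\mathcal{W}'_S$ where $S$ merely contains $\text{supp}(\bW)$ causes no issue). All of this is otherwise routine, and the result follows without any probabilistic input, so no concentration machinery is needed.
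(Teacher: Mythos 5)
Your proposal is correct and is essentially the argument the paper intends: the paper omits the proof entirely, stating only that the lemma is ``a simple extension of \citet[Lemma 5.1]{baraniuk2008simple} from the sparse setting to the group-sparse setting,'' and your reconstruction is precisely that standard covering-plus-geometric-series argument, correctly adapted by taking a per-support $\epsilon$-net of the $\|\cdot\|_\Sigma$-unit sphere in each $k_1J$-dimensional row-support subspace and keeping the telescoping residuals inside one fixed support. The only point you gloss over --- applying the triangle inequality to the infinite sum --- is harmless, since a seminorm on a finite-dimensional space is automatically continuous.
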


\begin{proof}[Proof of Lemma~\ref{lem:RIP}]

By Lemma~\ref{lem:Wfixed_highprob}, for any fixed $\bW$,
with probability at least $1-2e^{-c_3N}$,
\[c_1 \leq \frac{\sum_n \norm{\bE_n^\top \bW^\top X_n}_2}{N\sqrt{\trace(\bW^\top\Sigma \bW)}}\leq c_2\]
where $c_1,c_2,c_3>0$ depend only on $\nu,\lambda_{\min}(\Sigma),\gamma_0,\gamma_1$.
Now choose $\epsilon =\frac{c_1}{4c_2}$.
In the notation of Lemma~\ref{lem:covering}, let $f(\bW)=\sum_n \norm{\bE_n^\top \bW^\top X_n}_2$, and take $\mathcal{W}'$ as in the lemma. 
Since  $|\mathcal{W}'|\leq I^{k_1}(3/\epsilon)^{k_1J}$, the above bound is true for all $\bW\in\mathcal{W}'$ with probability at least
\[1-2e^{-c_3N + \log( I^{k_1}(3/\epsilon)^{k_1J})}\geq 1-\delta/2,\]
by our lower bound  $N\geq a_1 (k_1 J+k_1\log(I)+\log(1/\delta))$ as long as the constant $a_1$ is chosen appropriately.
Next, applying Lemma~\ref{lem:covering}, for all $\bW\in\mathcal{W}$,
\[\frac{ \sum_n\norm{\bE_n^\top \bW^\top X_n}_2}{N\sqrt{\trace(\bW^\top\Sigma \bW)}}\geq c_1 - \frac{\epsilon}{1-\epsilon}\cdot c_2 \geq \frac{c_1}{2}.\]
After defining $a_2=c_1/2$, this proves the lemma.
\end{proof}

\subsubsection{Proof of Lemma~\ref{lem:Wfixed_highprob}}
We begin by stating a supporting lemma.
\newcommand{\EE}[1]{\mathbb{E}\left[{#1}\right]}
\newcommand{\PP}[1]{\mathbb{P}\left\{{#1}\right\}}
\newcommand{\One}[1]{\mathbf{1}\left\{{#1}\right\}}
\begin{lemma}\label{lem:subgaussian_norm_sum}
Let $B_1\in\mathbb{R}^{M_1\times d},\dots,B_N\in\mathbb{R}^{M_N\times d}$ be fixed matrices with
\[\sum_{n=1}^N \norm{B_n}_{\mathsf{F}} \geq c_1 N\]
and
\[\sqrt{\sum_{n=1}^N \norm{B_n}^2_{\mathsf{F}}} \leq c_2 \sqrt{N}\]
for some $c_1,c_2>0$.
Let $X_1,\dots,X_N\in\mathbb{R}^d$ be iid~random vectors with $\EE{X_n}=0$ and $\EE{X_nX_n^\top}=\mathbb{I}_d$,
which are $\sigma$-subgaussian, meaning that $\EE{e^{v^\top X_n}}\leq e^{\sigma^2\norm{v}^2_2/2}$ for all fixed $v\in\mathbb{R}^d$.
Then
\[\PP{c_3 N \leq \sum_{n=1}^N \norm{B_n X_n}_2 \leq c_4 N}\geq 1 - 2e^{-c_5N},\]
where $c_3,c_4,c_5>0$ depend only on $c_1,c_2,\sigma$.
\end{lemma}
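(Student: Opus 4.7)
Let $Z_n = \|B_n X_n\|_2$. The plan is to show that $S := \sum_n Z_n$ has mean of order $N$ and concentrates around its mean with sub-Gaussian tails of the form $e^{-cN}$; taken together these imply the stated two-sided bound.

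First I would bound $\mathbb{E}[S]$ from both sides. A direct calculation gives $\mathbb{E}[Z_n^2] = \trace(B_n^\top B_n) = \|B_n\|_{\mathsf{F}}^2$, so by Cauchy--Schwarz $\mathbb{E}[Z_n] \leq \|B_n\|_{\mathsf{F}}$. For the matching lower bound, the sub-Gaussian MGF hypothesis yields a fourth-moment estimate $\mathbb{E}[Z_n^4] \leq C(\sigma)\|B_n\|_{\mathsf{F}}^4$, obtained by expanding the quadratic form $X_n^\top B_n^\top B_n X_n$ and using standard moment comparisons for sub-Gaussians; after which Paley--Zygmund applied to $Z_n^2$ gives $\mathbb{P}\{Z_n^2 \geq \tfrac{1}{2}\|B_n\|_{\mathsf{F}}^2\} \geq c(\sigma)>0$ and hence $\mathbb{E}[Z_n] \geq c'(\sigma)\|B_n\|_{\mathsf{F}}$. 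Summing and invoking the hypotheses on the $B_n$,
\[
c'(\sigma)\,c_1 N \;\leq\; \mathbb{E}[S] \;\leq\; \sum_n \|B_n\|_{\mathsf{F}} \;\leq\; \sqrt{N\sum_n \|B_n\|_{\mathsf{F}}^2} \;\leq\; c_2 N.
\]

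Next I would establish sub-Gaussian concentration of $S$ around $\mathbb{E}[S]$. Applying the Hanson--Wright inequality to each quadratic form $Z_n^2 = X_n^\top (B_n^\top B_n) X_n$ (which is available under only the MGF form of sub-Gaussianity) produces a sub-exponential tail for $Z_n^2 - \|B_n\|_{\mathsf{F}}^2$ with parameters governed by $\|B_n^\top B_n\|_{\mathsf{F}} \leq \|B_n\|_{\mathsf{op}}\|B_n\|_{\mathsf{F}}$ and $\|B_n^\top B_n\|_{\mathsf{op}} = \|B_n\|_{\mathsf{op}}^2$. The elementary inequality $|a-b| \leq \sqrt{|a^2-b^2|}$ for $a,b\geq 0$ then converts this into a sub-Gaussian tail for $Z_n - \|B_n\|_{\mathsf{F}}$ with variance proxy of order $\sigma^2\|B_n\|_{\mathsf{op}}^2 \leq \sigma^2\|B_n\|_{\mathsf{F}}^2$, and translating the center by the bounded constant $\mathbb{E}[Z_n] - \|B_n\|_{\mathsf{F}}$ gives the same bound for $Z_n - \mathbb{E}[Z_n]$. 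By independence, $S - \mathbb{E}[S]$ is sub-Gaussian with variance proxy $O\!\left(\sigma^2 \sum_n \|B_n\|_{\mathsf{F}}^2\right) \leq O(\sigma^2 c_2^2 N)$, so the standard sub-Gaussian/Chernoff bound applied with deviation $t = \tfrac{1}{2}c'(\sigma)c_1 N$ gives $\mathbb{P}\{|S-\mathbb{E}[S]| \geq t\} \leq 2 e^{-c_5 N}$ for some $c_5>0$ depending only on $c_1, c_2, \sigma$. Combining this with the mean bounds yields the claim with $c_3 = \tfrac{1}{2}c'(\sigma)c_1$ and $c_4 = c_2 + \tfrac{1}{2}c'(\sigma)c_1$.

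\textbf{Main obstacle.} The delicate step is the passage from Hanson--Wright for the quadratic form $Z_n^2$ to sub-Gaussian concentration for the norm $Z_n$ itself: sub-Gaussianity of $X_n$ in the MGF sense does not by itself imply Lipschitz concentration of the nonlinear functional $\|B_n X_n\|_2$, so one has to track the two Hanson--Wright regimes (quartic/sub-Gaussian for small deviations, linear/sub-exponential for large deviations) carefully and verify that the implied variance proxy for $Z_n$ scales only with $\|B_n\|_{\mathsf{op}}^2$, so that after summing we land on $O(\sigma^2 c_2^2 N)$ rather than a dimension-dependent quantity that would destroy the target $e^{-c_5 N}$ rate.
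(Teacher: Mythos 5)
Your overall architecture is sound and genuinely different from the paper's. The paper never tries to concentrate each individual norm $\norm{B_nX_n}_2$ around its mean: it normalizes $Z_n = \norm{B_nX_n}_2/(2\sigma\norm{B_n}_{\mathsf{F}})$, verifies only that $\mathbb{E}[Z_n^2]$ is bounded below and $\mathbb{E}[e^{Z_n^2}]$ bounded above (the latter via the moment-generating-function bound of Hsu, Kakade and Zhang), uses a pigeonhole lemma to extract a subset of at least $\frac{(c_1)^2}{4(c_2)^2}N$ indices with $\norm{B_n}_{\mathsf{F}}\geq c_1/2$, and then runs a bespoke Chernoff argument for the lower tail of a sum of non-negative variables over that subset, with the upper bound coming from Cauchy--Schwarz and the upper tail of $\sum_n Z_n^2$. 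Your plan---(i) show $\mathbb{E}\big[\sum_n\norm{B_nX_n}_2\big]\asymp N$ via Paley--Zygmund (essentially the same moment interpolation the paper hides inside its lemma on non-negative variables), then (ii) center each term and apply a Hoeffding-type bound to the sum---is cleaner and dispenses with the pigeonhole step entirely.

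However, step (ii) as you describe it contains a genuine error. The two-sided Hanson--Wright inequality is \emph{not} available under only the MGF form of sub-Gaussianity of the vector $X_n$ (the standard version needs, e.g., independent coordinates), and the conclusion you want from it---that $\norm{B_nX_n}_2$ concentrates with variance proxy of order $\sigma^2\norm{B_n}_{\mathsf{op}}^2$---is false at this level of generality. Take $X_n=0$ with probability $1/2$ and $X_n=\sqrt{2}\,G$ with $G\sim N(0,\mathbb{I}_d)$ otherwise: this is mean-zero, isotropic, and $\sigma$-sub-Gaussian with $\sigma^2=2$, yet for $B_n=\mathbb{I}_d$ the norm deviates from its mean by roughly $\sqrt{d}=\norm{B_n}_{\mathsf{F}}$ with probability $1/2$, which no proxy of order $\norm{B_n}_{\mathsf{op}}^2=1$ can accommodate. (Your recentering from $\norm{B_n}_{\mathsf{F}}$ to $\mathbb{E}\norm{B_nX_n}_2$ also introduces a deterministic shift as large as a constant times $\norm{B_n}_{\mathsf{F}}$, which by itself already degrades an operator-norm proxy to a Frobenius-norm one.) Fortunately the sharper proxy is unnecessary: the one-sided bound of Hsu et al.\ used in the paper gives $\mathbb{E}\big[e^{\norm{B_nX_n}_2^2/(4\sigma^2\norm{B_n}_{\mathsf{F}}^2)}\big]\leq e^{3/8}$, so the non-negative variable $\norm{B_nX_n}_2$ has sub-Gaussian Orlicz norm $O(\sigma\norm{B_n}_{\mathsf{F}})$ and, after centering, variance proxy $O(\sigma^2\norm{B_n}_{\mathsf{F}}^2)$. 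Summing over $n$ yields proxy $O\big(\sigma^2\sum_n\norm{B_n}_{\mathsf{F}}^2\big)\leq O(\sigma^2 (c_2)^2N)$ directly from the hypothesis---there is no dimension-dependence to worry about---and your Chernoff step at deviation $\tfrac12 c'(\sigma)c_1N$ goes through. With that repair, your argument proves the lemma.
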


\begin{proof}[Proof of Lemma~\ref{lem:Wfixed_highprob}]
First we calculate
\begin{multline*}
\sum_n {\norm{\bE_n^\top \bW^\top \Sigma^{1/2}}_{\mathsf{F}}^2} 
  = \sum_n {\trace(\bE_n^\top \bW^\top \Sigma \bW \bE_n)}\\
 =  {\trace(\Sigma^{1/2}\bW(\sum_n \bE_n\bE_n^\top)\bW^\top \Sigma^{1/2})}
 \leq  {\trace(\Sigma^{1/2}\bW\bW^\top \Sigma^{1/2})}\cdot N\norm{\frac{1}{N}\sum_n \bE_n\bE_n^\top}\\
 =  {\trace(\bW^\top\Sigma \bW)}\cdot N\norm{\frac{1}{N}\sum_n \bE_n\bE_n^\top}
 \leq \gamma_1 N  {\trace(\bW^\top\Sigma \bW)}\;.
\end{multline*}
And,
\begin{multline*}\sum_n\norm{\bE_n^\top \bW^\top \Sigma^{1/2}}_{\mathsf{F}} 
 =\sum_n \sqrt{\sum_i \norm{(\Sigma^{1/2}\bW)_{i,*}\bE_n}^2_2}
 \geq\sum_n \sum_i \frac{\norm{(\Sigma^{1/2}\bW)_{i,*}}_2}{\sqrt{\trace(\bW^\top\Sigma \bW)}}\norm{(\Sigma^{1/2}\bW)_{i,*}\bE_n}_2\\
 \geq \gamma_0 N\sum_i \frac{\norm{(\Sigma^{1/2}\bW)_{i,*}}^2_2}{\sqrt{\trace(\bW^\top\Sigma \bW)}}
=\gamma_0 N\sqrt{\trace(\bW^\top\Sigma \bW)}\;.
\end{multline*}

Now, writing $\widetilde{X}_n = \Sigma^{-1/2}X_n$ and defining
\[B_n = \frac{\bE_n^\top \bW^\top}{\sqrt{\trace(\bW^\top \Sigma \bW)}},\]
 we see that the $\widetilde{X}_n$'s are
iid~with mean zero, identity covariance, and are $\nu/\sqrt{\lambda_{\min}(\Sigma)}$-subgaussian.
And, $\norm{\bE_n^\top \bW^\top {X}_n}_2 = \sqrt{\trace(\bW^\top \Sigma \bW)}\cdot \norm{B_n  \Sigma^{1/2}\widetilde{X}_n}_2$. 
Applying Lemma~\ref{lem:subgaussian_norm_sum}, then, for some constants $c_1,c_2,c_3$ depending only on $\nu,\lambda_{\min}(\Sigma),\gamma_0,\gamma_1$,
\[\mathbb{P}\left\{ c_1N\sqrt{\trace(\bW^\top \Sigma \bW)}\leq \sum_n \norm{\bE_n^\top \bW^\top {X}_n}_2\leq c_2N\sqrt{\trace(\bW^\top \Sigma \bW)}\right\}\geq 1 - 2e^{-c_3N}.\]
\end{proof}

\subsubsection{Proof of Lemma~\ref{lem:subgaussian_norm_sum}}
Before we prove Lemma~\ref{lem:subgaussian_norm_sum}, we first state two additional
supporting lemmas, proved below:
\begin{lemma}\label{lem:proportion_interval}
Let $x\in\mathbb{R}^N$ be a vector with $ \norm{x}_1\geq c_1N$, $\norm{x}_2\leq c_2\sqrt{N}$. Then
\[\left|\left\{n:  |x_n|\geq \frac{c_1}{2}\right\}\right| \geq \frac{(c_1)^2}{4(c_2)^2} N.\]
\end{lemma}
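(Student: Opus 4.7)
The plan is a classical Paley--Zygmund style decomposition: split the sum $\|x\|_1$ according to whether $|x_n|$ is small (below $c_1/2$) or large (at least $c_1/2$), use the $\ell_1$ lower bound to argue that a definite amount of mass must live on the large coordinates, and then use Cauchy--Schwarz together with the $\ell_2$ upper bound to convert that mass into a lower bound on the cardinality of the large set.

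Concretely, let $S = \{n : |x_n| \geq c_1/2\}$ and $S^c$ its complement. On $S^c$ each entry is bounded by $c_1/2$, so
\[
\sum_{n \in S^c} |x_n| \leq \frac{c_1}{2} N.
\]
Combining with $\|x\|_1 \geq c_1 N$ gives
\[
\sum_{n \in S} |x_n| \geq \frac{c_1}{2} N.
\]

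Next I would apply Cauchy--Schwarz to the left-hand side, restricting the $\ell_2$ norm to $S$ and bounding it by the full $\ell_2$ norm:
\[
\sum_{n \in S} |x_n| \leq \sqrt{|S|} \cdot \sqrt{\sum_{n \in S} x_n^2} \leq \sqrt{|S|} \cdot c_2 \sqrt{N}.
\]
Combining the two displays yields $\frac{c_1}{2} N \leq c_2 \sqrt{|S| N}$, and squaring gives $|S| \geq \frac{c_1^2}{4 c_2^2} N$, which is the desired conclusion.

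There is no real obstacle here; the only mild subtlety is choosing the cutoff $c_1/2$ (any fixed fraction of $c_1$ would work, up to constants), and this particular choice is what the statement of the lemma asks for. The argument is deterministic and requires nothing beyond the two stated norm bounds on $x$.
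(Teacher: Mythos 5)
Your proof is correct and follows essentially the same route as the paper's: split the $\ell_1$ mass at the threshold $c_1/2$, bound the contribution of the small coordinates by $c_1 N/2$, and apply Cauchy--Schwarz to the large coordinates to convert the remaining $\ell_1$ mass into a cardinality bound via $\|x\|_2 \leq c_2\sqrt{N}$. The only cosmetic difference is that the paper phrases the Cauchy--Schwarz step with indicator functions over all $N$ coordinates rather than restricting the sum to $S$ first; the two are identical in substance.
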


\begin{lemma}\label{lem:nonneg_subg}
Suppose $Z_1,\dots,Z_N$ are independent non-negative random variables
with $\EE{(Z_n)^2}\geq a_1$ and $\EE{e^{(Z_n)^2}}\leq a_2$ for all $n=1,\dots,N$, where $a_1,a_2>0$.
Then
\[\PP{ \sum_n Z_n < a_3 N}\leq e^{-a_4 N}\text{\quad and \quad}
\PP{\sum_n (Z_n)^2> a_5 N}\leq e^{-a_6 N}\]
where $a_3,a_4,a_5,a_6>0$ are constants depending only on $a_1,a_2$.
\end{lemma}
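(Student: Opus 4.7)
The plan is to prove the two tail inequalities separately using moment-generating-function arguments tailored to the nonnegative, squared-subgaussian setting.

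For the upper tail on $\sum_n Z_n^2$, I would apply the standard Chernoff method. By independence, $\mathbb{E}\bigl[e^{\sum_n Z_n^2}\bigr] \leq a_2^N$, so Markov's inequality gives
\[
\mathbb{P}\Bigl\{\textstyle\sum_n Z_n^2 > a_5 N\Bigr\} \leq a_2^N e^{-a_5 N} = e^{(\log a_2 - a_5)N}.
\]
Taking $a_5 = \log a_2 + 1$ yields $a_6 = 1$, which settles the upper tail. This step is essentially immediate.

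For the lower tail on $\sum_n Z_n$, the moment-generating function of $-Z_n$ does not directly deliver a useful Chernoff bound, so I would instead extract a uniform pointwise lower bound that holds with constant probability and then apply a Bernoulli concentration. First, the elementary inequality $e^x \geq x^2/2$ for $x\geq 0$ gives the moment bound $\mathbb{E}[Z_n^4] \leq 2\mathbb{E}[e^{Z_n^2}] \leq 2a_2$. Applying Paley--Zygmund to the nonnegative random variable $Z_n^2$ at threshold $\tfrac{1}{2}\mathbb{E}[Z_n^2] \geq a_1/2$ yields
\[
\mathbb{P}\bigl\{Z_n \geq \sqrt{a_1/2}\bigr\} \;=\; \mathbb{P}\bigl\{Z_n^2 \geq a_1/2\bigr\} \;\geq\; \frac{1}{4}\cdot \frac{\mathbb{E}[Z_n^2]^2}{\mathbb{E}[Z_n^4]} \;\geq\; \frac{a_1^2}{8a_2} \;=:\; p.
\]
The indicators $W_n = \mathbf{1}\bigl\{Z_n \geq \sqrt{a_1/2}\bigr\}$ are then independent Bernoulli variables with success probability at least $p$, so Hoeffding's inequality yields $\mathbb{P}\bigl\{\sum_n W_n < pN/2\bigr\} \leq e^{-p^2 N/2}$. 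On the complementary event one has $\sum_n Z_n \geq \sqrt{a_1/2}\cdot pN/2$, which produces the claimed bound with $a_3 = p\sqrt{a_1/2}/2$ and $a_4 = p^2/2$.

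The main conceptual step is the invocation of Paley--Zygmund: the second-moment lower bound $\mathbb{E}[Z_n^2]\geq a_1$ alone would be insufficient, since a nonnegative variable with prescribed second moment could place nearly all of its mass at zero while compensating with rare large spikes. The exponential-moment hypothesis is used precisely to control the fourth moment and thereby certify that each $Z_n$ exceeds a fixed positive constant with non-vanishing probability, after which the Bernoulli concentration argument is routine. Everything else reduces to bookkeeping of the constants $a_3,\dots,a_6$ in terms of $a_1,a_2$.
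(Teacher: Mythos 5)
Your proof is correct, and for the harder half (the lower tail on $\sum_n Z_n$) it takes a genuinely different route from the paper. The upper tail is handled identically in both arguments: independence gives $\mathbb{E}\bigl[e^{\sum_n Z_n^2}\bigr]\le a_2^N$ and Markov finishes (the paper takes $a_5=2a_2$, you take $a_5=\log a_2+1$; both are fine since $a_2\ge 1$). For the lower tail, the paper first extracts a first-moment bound $\mathbb{E}[Z_n]\ge a_1^2/\sqrt{6a_2}$ by Cauchy--Schwarz interpolation, then bounds the Laplace transform $\mathbb{E}[e^{-tZ_n}]$ directly by comparing its Taylor series term by term with $\mathbb{E}[e^{t^2 Z_n^2}]$ and using convexity, and closes with a Chernoff bound on the sum. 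You instead convert the moment hypotheses into a pointwise statement --- each $Z_n$ exceeds $\sqrt{a_1/2}$ with probability at least $p=a_1^2/(8a_2)$ by Paley--Zygmund, with the fourth moment controlled via $e^x\ge x^2/2$ --- and then apply Hoeffding to the resulting independent indicators; all steps check out (in particular, the threshold $\tfrac12\mathbb{E}[Z_n^2]\ge a_1/2$ makes the event inclusion go the right way, and Hoeffding applies to non-identically-distributed Bernoullis with $\mathbb{E}[\sum_n W_n]\ge pN$). Your route is more elementary, avoids the somewhat delicate Laplace-transform manipulation, and in fact only needs a fourth-moment bound (not the full exponential moment) for the lower tail; it also amusingly mirrors the ``a constant fraction of coordinates is large'' device that the paper itself deploys one level up in Lemma~\ref{lem:proportion_interval}. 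The paper's argument is a single self-contained Chernoff computation with different (neither uniformly better) constants; since the lemma only asserts that $a_3,\dots,a_6$ depend on $a_1,a_2$, both are equally adequate.
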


Now we prove Lemma~\ref{lem:subgaussian_norm_sum}.
\begin{proof}[Proof of Lemma~\ref{lem:subgaussian_norm_sum}]
For each $n$, define
$Z_n = \frac{\norm{B_n X_n}_2}{2\sigma \norm{B_n}_{\mathsf{F}}}$.
We have
\[\EE{\norm{B_n X_n}^2_2} = \EE{X_n^\top B_n^\top B_n X_n} = \EE{\trace(B_n^\top B_n X_n X_n^\top)} = \trace(B_n^\top B_n) = \norm{B_n}^2_{\mathsf{F}},\]
and so
$\EE{(Z_n)^2} = \frac{1}{4\sigma^2}=: a_1$.
Next, it is known from \citet[Remark 2.3]{hsu2011tail} that
\[\EE{e^{\eta\norm{B_n X_n}^2_2} }\leq \sigma^2\trace(B_n^\top B_n) \eta + \frac{\sigma^4\trace((B_n^\top B_n)^2)\eta^2}{1 - 2\sigma^2\norm{B_n^\top B_n}\eta}\]
for all $0\leq \eta < \frac{1}{2\sigma^2\norm{B_n^\top B_n}}$. Using basic facts about matrix norms we have $\norm{B_n^\top B_n}=\norm{B_n}^2 \leq \norm{B_n}^2_{\mathsf{F}}$ and $\trace(B_n^\top B_n) = \norm{B_n}^2_{\mathsf{F}}$ and 
$\trace((B_n^\top B_n)^2) = \norm{(B_n^\top B_n)^2}^2_{\mathsf{F}} \leq \norm{B_n^\top B_n}^4_{\mathsf{F}}$. Setting $\eta = \frac{1}{4\sigma^2\norm{B_n}^2_{\mathsf{F}}}$ we obtain
\[\EE{e^{(Z_n)^2}} = \EE{e^{\norm{B_n X_n}^2_2/(4\sigma^2\norm{B_n}^2_{\mathsf{F}})}} \leq e^{3/8}=: a_2.\]
Applying Lemma~\ref{lem:nonneg_subg} to $\{Z_n:n=1,\dots,n\}$, we see that for constants $a_5,a_6>0$ depending only on $a_1,a_2$,
\[\PP{ \sum_n  \frac{\norm{B_n X_n}^2_2}{4\sigma^2 \norm{B_n}^2_{\mathsf{F}}} \leq a_5 N}\geq 1 - e^{-a_6 N}.\]

Furthermore, setting
$\mathcal{I} = \{n : \norm{B_n}_{\mathsf{F}} \geq c_1/2\}$,
by applying Lemma~\ref{lem:proportion_interval} to the vector $(\norm{B_n}_{\mathsf{F}})_{n=1,\dots,N}$, 
we see that 
$\left|\mathcal{I}\right| \geq \frac{(c_1)^2}{4(c_2)^2}  N$.
Applying Lemma~\ref{lem:nonneg_subg} to $\{Z_n:n\in\mathcal{I}\}$
 we see that for constants $a_3,a_4>0$ depending only on $a_1,a_2$,
\[\PP{ \sum_{n\in\mathcal{I}}  \frac{\norm{B_n X_n}_2}{2\sigma \norm{B_n}_{\mathsf{F}}} \geq a_3 |\mathcal{I}|}\geq 1 - e^{-a_4 |\mathcal{I}|}\geq 1 - e^{-a_4 \frac{(c_1)^2}{4(c_2)^2}  N}.\]

Next, assume these events hold. We have
\[
\sum_n \norm{B_nX_n}_2 
=\sum_n \frac{\norm{B_nX_n}_2}{\norm{B_n}_{\mathsf{F}}} \cdot \norm{B_n}_{\mathsf{F}}\\
\leq \sqrt{\sum_n \frac{\norm{B_n X_n}^2_2}{\norm{B_n}^2_{\mathsf{F}}}}\cdot\sqrt{\sum_n \norm{B_n}^2_{\mathsf{F}}}\\
\leq 2\sigma \sqrt{a_5 N}\cdot c_2\sqrt{N}.
\]
Furthermore,
\begin{multline*}
\sum_n \norm{B_nX_n}_2 
\geq \sum_{n\in\mathcal{I}} \norm{B_n X_n}_2
= \sum_{n\in\mathcal{I}} 2\sigma\norm{B_n}_{\mathsf{F}} \cdot\frac{\norm{B_n X_n}_2}{2\sigma\norm{B_n}_{\mathsf{F}}}\\
\geq c_1 \cdot \sum_{n\in\mathcal{I}} \frac{\norm{B_n X_n}_2}{2\sigma\norm{B_n}_{\mathsf{F}}}
\geq c_1 \cdot a_3|\mathcal{I}|
\geq c_1 \cdot a_3\cdot \frac{(c_1)^2}{4(c_2)^2}  N\;.
\end{multline*}

Setting
\[c_3 =  c_1 \cdot a_3\cdot \frac{(c_1)^2}{4(c_2)^2}, \ c_4 =2\sigma c_2\sqrt{a_5} , \ c_5 = \min\left\{ a_6, a_4 \frac{(c_1)^2}{4(c_2)^2} \right\},\]
we have proved the lemma.
\end{proof}

Finally, we prove the two supporting results, Lemmas~\ref{lem:proportion_interval} and~\ref{lem:nonneg_subg}.

\begin{proof}[Proof of Lemma~\ref{lem:proportion_interval}] 
We have
\begin{align*}
c_1N
&\leq \sum_{n=1}^N |x_n|
= \sum_{n=1}^N |x_n|\cdot \One{|x_n|\geq c_1/2} +  \sum_{n=1}^N |x_n|\cdot \One{|x_n|< c_1/2} \\
&\leq \sum_{n=1}^N |x_n|\cdot \One{|x_n|\geq c_1/2} + N \cdot c_1/2\\
&\leq \sqrt{\sum_{n=1}^N |x_n|^2}\cdot \sqrt{\sum_{n=1}^N\One{|x_n|\geq c_1/2}^2} + N \cdot c_1/2\\
&\leq c_2\sqrt{N}\cdot \sqrt{\left|\left\{n:  |x_n|\geq c_1/2\right\}\right| } + N \cdot c_1/2,
\end{align*}
and therefore,
$\left|\left\{n: |x_n|\geq c_1/2\right\}\right| \geq \frac{(c_1)^2}{4(c_2)^2} N$.
\end{proof}

\begin{proof}[Proof of Lemma~\ref{lem:nonneg_subg}]
For the upper bound, we have
\[\PP{\sum_n (Z_n)^2 > 2a_2 N} \leq \EE{e^{\sum_n (Z_n)^2 - 2a_2N}}  = \left(\prod_n \EE{e^{(Z_n)^2}}\right)\cdot e^{-2a_2N} = e^{-a_2N}.\]
For the lower bound, we have, for each $n$,
\begin{multline*}
a_1
\leq \EE{(Z_n)^2}
= \EE{(Z_n)^{0.5}(Z_n)^{1.5}}
\leq \sqrt{\EE{Z_n}}\cdot\sqrt{\EE{(Z_n)^3}}\text{\quad since $Z_n\geq 0$}\\
\leq \sqrt{\EE{Z_n}}\cdot\sqrt[4]{\EE{(Z_n)^6}}
\leq \sqrt{\EE{Z_n}}\cdot\sqrt[4]{\EE{6e^{(Z_n)^2} }}
\leq \sqrt{\EE{Z_n}}\cdot\sqrt[4]{6a_2}
\end{multline*}
and so
$\EE{Z_n}\geq \frac{a_1^2}{\sqrt{6a_2}}$.
Next, taking any $t\in[0,1]$,
\begin{align*}
\EE{e^{-tZ_n}} 
&= 1 - t\EE{Z_n} + \sum_{k\geq 2} \frac{(-1)^kt^k\EE{(Z_n)^k}}{k!}\\
&\leq -t\EE{Z_n} + \left(1 + \sum_{k\geq 1} \frac{t^{2k} \EE{(Z_n)^{2k}}}{(2k)!}\right)\text{\quad by removing negative terms}\\
&= -t\EE{Z_n} + \EE{e^{t^2(Z_n)^2}}\text{\quad since $k!\leq (2k)!$}\\
&\leq -t\EE{Z_n} + \EE{t^2 e^{(Z_n)^2} + (1-t^2)\cdot 1}\text{\quad by convexity of $z\mapsto e^z$}\\
&\leq 1- t \frac{a_1^2}{\sqrt{6a_2}} + t^2 (a_2-1).
\end{align*}

Setting $t = \min\left\{1, \frac{a_1^2}{2(a_2-1)\sqrt{6a_2}}\right\}$ we get
\[\EE{e^{-tZ_n}}\leq 1- t\frac{(a_1)^2}{2\sqrt{6a_2}}\leq e^{-t \frac{(a_1)^2}{2\sqrt{6a_2}}}.\]
Then
\begin{multline*}\PP{\sum_n Z_n < \frac{(a_1)^2}{4\sqrt{6a_2}} N} \leq \EE{e^{t N \frac{(a_1)^2}{4\sqrt{6a_2}} - t \sum_n Z_n}} \\= \EE{e^{tN \frac{(a_1)^2}{4\sqrt{6a_2}}}} \left(\prod_n \EE{e^{-tZ_n}}\right) = e^{t N \frac{(a_1)^2}{4\sqrt{6a_2}}- t N \frac{(a_1)^2}{2\sqrt{6a_2}}} = e^{- t N \frac{(a_1)^2}{4\sqrt{6a_2}}}.\end{multline*}
Setting
\[a_3 =  \frac{(a_1)^2}{4\sqrt{6a_2}} , \ a_4 =  \min\left\{1, \frac{a_1^2}{2(a_2-1)\sqrt{6a_2}}\right\}\cdot  \frac{(a_1)^2}{4\sqrt{6a_2}}, \ a_5= 2a_2, \ a_6 = a_2,\]
we have proved the lemma.
\end{proof}

\section{Proofs for Section \ref{s:func}}
We begin this section with the proof of Theorem \ref{t:res}.  After, we will divide the proof of Theorem \ref{t:highfreq_main} into a sequence of lemmas.

\subsection{Proof of Theorem  \ref{t:res}}
%\begin{proof}[Proof of Theorem  \ref{t:res}]
Let $e_1, e_2, \dots$ be an orthonormal basis of $\mcH$.  Then each coordinate $x_i$ of $x\in \mcH^I$, can be expressed as
\[
x_i = \sum_k x_{i,k} e_k.
\]
We denote $x_{i,k} \in \mbR$ as the coordinates with respect to the $e_1,e_2,\dots$ basis, and we will also let $x^{(k)}$ represent the vector $\{x_{1,k}, \dots, \edits{ x_{I,k}}\}$.  We can then write
\begin{align*}
 \| \bX  x  \|_{\mcH}^2 
 & =  \langle \bX x, \bX x \rangle_{\mcH} = \sum_{n=1}^N \langle X_n x, X_n x \rangle  \\
 & =   \sum_{n=1}^N \sum_{i=1}^I \sum_{j=1}^I X_{ni} X_{nj} \langle x_i ,x_j\rangle_{\mcH}\\
 \intertext{Expressing the $x_i$ with respect to the orthonormal basis $e_1,e_2,\dots$,}
&= \sum_n \sum_i \sum_j \sum_k \sum_l  X_{ni} X_{nj} x_{i,k} x_{j,l} \langle e_k, e_l \rangle \\
 &=  \sum_n \sum_i \sum_j \sum_k  X_{ni} X_{nj} x_{i,k} x_{j,k}  \\
 & = \sum_k  x^{(k) \top} \bX^\top \bX  x^{(k)}  \\
  & =\sum_k \| \bX x^{(k)}\|_2^2 .
  \end{align*}
  Note that the last norm above is simply the Euclidean norm on $\mbR^N$.  By assumption in the Theorem, we have that the above is bounded from below as follows:
  \begin{align*}
 \| \bX  x  \|_{\mcH}^2 &=   \sum_k \| \bX x^{(k)}\|_2^2 \\
 &\geq\sum_k \left(c_1 \sqrt{N } \|x^{(k)}\|_2  - c_2\sqrt{\log(I)} \|x^{(k)}\|_{1} \right)^2 \\
  & = (c_1)^2 N \sum_k\|x^{(k)}\|_2^2
  -2 c_1 c_2 \sqrt{N \log( I)} \sum_k  \|x^{(k)}\|_2 \|x^{(k)}\|_1
  +  (c_2)^2  \log( I)  \sum_k \|x^{(k)}\|_{1}^2. 
 \end{align*}
 Examining the first sum in this last line, we have by Parceval's identity
 \[
  \|x\|_{\mcH}^2 = \sum_i \|x_i\|_{\mcH}^2 = \sum_i \sum_k | x_{i,k}|^2 = \sum_{k} \| x^{(k)}\|^2.
 \]
For the second sum, the Cauchy-Schwartz inequality gives
 \begin{align*}
  \sum_k  \|x^{(k)}\|_2 \|x^{(k)}\|_1 
  \leq  \| x\|_{\mcH} \sqrt{ \sum_k \|x^{(k)}\|_1^2}.
 \end{align*}
So we have that 
\[
 \| \bX  x  \|_{\mcH}^2 
 \geq \left(c_1 \sqrt{N} \|x\|_{\mcH} - c_2 \sqrt{  \log( I)}\sqrt{ \sum_k \|x^{(k)}\|_1^2}\right)^2
\]
or equivalently
\[
\| \bX  x  \|_{\mcH} \geq c_1 \sqrt{N} \|x\|_{\mcH} - c_2 \sqrt{  \log( I)}\sqrt{ \sum_k \|x^{(k)}\|_1^2}.
\]
Now for the last piece observe that (by Cauchy-Schwarz)
\begin{align*}
\sum_k \|x^{(k)}\|_1^2 & = \sum_k \sum_i \sum_j |x_{i,k}| |x_{j,k}| \\
& \leq \sum_i \sum_j \left(\sum_k |x_{i,k}|^2\right)^{1/2} \left(\sum_k |x_{j,k}|^2\right)^{1/2} \\
& = \left(\sum_i  \left(\sum_k |x_{i,k}|^2\right)^{1/2}  \right)^2  \\
& =  \left(\sum_i  \| x_i\|_{\mcH}  \right)^2 = \| x\|_{\ell_1/\mcH}^2.
\end{align*}
We can therefore conclude that
\[
\| \bX  x  \|_{\mcH} \geq c_1 \sqrt{N} \|x\|_{\mcH} - c_2 \sqrt{\log(I)}\| x\|_{\ell_1/\mcH}
\]
as claimed.
%\end{proof}

{\color{black}
Next suppose that $c_1>4c_2\sqrt{\frac{I_0\log(I)}{N}}$. We will show that $\bX$ satisfies the $\mathsf{RE}_{\mathsf{F}}(I_0,\alpha)$ property with $\alpha = \left(c_1 - 4c_2\sqrt{\frac{I_0\log(I)}{N}}\right)^2$.
To see this,  take any set $S$ with $|S|\leq I_0$, and any $x$ with $\norm{x_{S^c}}_{\ell_1/\mathcal{H}}\leq 3\norm{x_{S}}_{\ell_1/\mathcal{H}}$. By the Cauchy-Schwarz inequality, which relates the $\ell_1$ and $\ell_2$ norms of any vector (including a vector of functions $x_S$),
we have
\[\norm{x_S}_{\ell_1/\mathcal{H}}\leq \norm{x_S}_{\mathcal{H}}\cdot \sqrt{|\text{Support}(x_S)|} \leq \norm{x_S}_{\mathcal{H}}\cdot\sqrt{I_0}.\]
By the triangle inequality,
\[\norm{x}_{\ell_1/\mathcal{H}} \leq \norm{x_S}_{\ell_1/\mathcal{H}}  + \norm{x_{S^c}}_{\ell_1/\mathcal{H}}  \leq 4\norm{x_S}_{\ell_1/\mathcal{H}}  \leq 4\norm{x_S}_{\mathcal{H}}\cdot\sqrt{I_0} \leq 4\norm{x}_{\mathcal{H}}\cdot\sqrt{I_0}.\]
Then returning to the above,
\begin{multline*}
\| \bX  x  \|_{\mcH} \geq c_1 \sqrt{N} \|x\|_{\mcH} - c_2 \sqrt{\log(I)}\| x\|_{\ell_1/\mcH}
\geq  c_1 \sqrt{N} \|x\|_{\mcH} - c_2 \sqrt{\log(I)}\left( 4\norm{x}_{\mathcal{H}}\cdot\sqrt{I_0}\right)\\
 = \norm{x}_{\mcH} \cdot \sqrt{N}\cdot \left(c_1 - 4c_2 \sqrt{\frac{I_0\log(I)}{N}}\right) = \norm{x}_{\mcH}\cdot \sqrt{\alpha N},
\end{multline*}
proving that $\bX$ satisfies the $\mathsf{RE}_{\mathsf{F}}(I_0,\alpha)$ property.
}

\subsection{Proof of Theorem \ref{t:highfreq_main}}
We follow a very similar structure to the one found in Chapter 6 of \citet{buhlmann:vandegeer:2011}, adapting the arguments for function spaces as needed.  We break the proof into a sequence of Lemmas which can be thought of as a basic inequality, a functional concentration inequality, applying that inequality, a last lemma to set up the final proof.  We then combine all the lemmas to prove Theorem  \ref{t:highfreq_main}.

\begin{lemma}[Basic Inequality] \label{l:basic_ineq}
With probability one, we always have the inequality
\[
\frac{1}{2} \| \bX(\widehat \beta-\beta^\star)\|_{\mcH}^2 + \lambda \| \widehat \beta\|_{\ell_1/\mcH}
% \leq \langle Y  - \bX \beta^\star,  \bX(\widehat \beta-\beta^\star)\rangle +  \lambda \|  \beta ^\star\|_{\ell_1} 
\leq \left( \max_{1 \leq i \leq I} \|\eg^\top \bX^{(i)}\|  \right) \| \widehat \beta - \beta^\star\|_{\ell_1/\mcH} + \lambda \| \beta^\star \|_{\ell_1/\mcH}.
\]
\begin{proof}
We can set up a \textit{basic inequality} using
\[
\frac{1}{2} \| Y  - \bX \widehat \beta\|^2 +\lambda  \| \widehat \beta \|_{\ell_1/\mcH} 
\leq \frac{1}{2} \| Y  - \bX \beta^\star\|^2 + \lambda  \| \beta^\star \|_{\ell_1/\mcH}. 
\]
We can rewrite
\begin{align*}
\frac{1}{2} \| Y  - \bX \widehat \beta\|^2 
& = \frac{1}{2} \| Y  - \bX \beta^\star - \bX(\widehat \beta- \beta^\star)\|^2 \\
& = \frac{1}{2} \| Y  - \bX \beta^\star \|^2 + \frac{1}{2} \| \bX(\widehat \beta-\beta^\star)\|^2-  \frac{2}{2}\langle Y  - \bX \beta^\star,  \bX(\widehat \beta-\beta^\star)\rangle.
\end{align*}
This gives
\begin{align*}
\frac{1}{2} \| \bX(\widehat \beta-\beta^\star)\|^2 + \lambda \| \widehat \beta\|_{\ell_1/\mcH}
 & \leq  \langle Y  - \bX \beta^\star,  \bX(\widehat \beta-\beta^\star)\rangle +  \lambda \|  \beta ^\star\|_{\ell_1/\mcH} \\
 & = \sum_{i=1}^I \langle  \eg^\top \bX^{(i)},  \widehat \beta_i-\beta^\star_i \rangle +  \lambda \|  \beta ^\star\|_{\ell_1/\mcH} \\
 & \leq \sum_{i=1}^I  \| \eg^\top \bX^{(i)}\| \|  \widehat \beta_i-\beta^\star_i \| +  \lambda \|  \beta ^\star\|_{\ell_1/\mcH} \\
 & \leq   \max_{1 \leq i \leq I}  \| \eg^\top \bX^{(i)}\| \|  \widehat \beta-\beta^\star \|_{\ell_1/\mcH} +  \lambda \|  \beta ^\star\|_{\ell_1/\mcH},
\end{align*}
which is the desired result.
\end{proof}
\end{lemma}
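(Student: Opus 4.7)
The plan is to exploit the fact that $\widehat\beta$ minimizes $L_{\mathsf{F}}$, so in particular $L_{\mathsf{F}}(\widehat\beta)\leq L_{\mathsf{F}}(\beta^\star)$, and then carry out the standard LASSO ``completing the square'' manipulation, but performed inside the Hilbert space $\mcH$ rather than in $\mbR$. Concretely, first I would write out
\[
\tfrac{1}{2}\sum_n \|Y_n - X_n^\top \widehat\beta\|_{\mcH}^2 + \lambda\|\widehat\beta\|_{\ell_1/\mcH}
\;\leq\; \tfrac{1}{2}\sum_n \|Y_n - X_n^\top \beta^\star\|_{\mcH}^2 + \lambda\|\beta^\star\|_{\ell_1/\mcH},
\]
and expand the left-hand quadratic by inserting $\pm X_n^\top\beta^\star$ inside the norm. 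Because $\|\cdot\|_{\mcH}^2$ comes from an inner product, the cross term is bilinear exactly as in $\mbR^N$, and the $\tfrac{1}{2}\|Y_n-X_n^\top\beta^\star\|_{\mcH}^2$ pieces cancel on both sides.

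Second, I would use Assumption \ref{a:f:model} to identify $Y_n - X_n^\top\beta^\star = \eg_n$, which turns the cross term into $\sum_n \langle \eg_n,\, X_n^\top(\widehat\beta-\beta^\star)\rangle_{\mcH}$. Swapping the order of summation over $n$ and over the coordinate index $i\in\{1,\dots,I\}$ (valid by linearity of the inner product), this equals $\sum_{i=1}^I \bigl\langle \sum_n X_{ni}\eg_n,\; \widehat\beta_i-\beta_i^\star\bigr\rangle_{\mcH}$, which in the notation of the lemma is $\sum_i \langle \eg^\top \bX^{(i)}, \widehat\beta_i - \beta_i^\star\rangle_{\mcH}$.

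Third, I would apply Cauchy--Schwarz in $\mcH$ to each summand, $\langle \eg^\top \bX^{(i)}, \widehat\beta_i-\beta_i^\star\rangle_{\mcH} \leq \|\eg^\top \bX^{(i)}\|_{\mcH}\cdot \|\widehat\beta_i-\beta_i^\star\|_{\mcH}$, and then a H\"older-type bound (the $\ell_\infty$/$\ell_1$ duality):
\[
\sum_{i=1}^I \|\eg^\top \bX^{(i)}\|_{\mcH}\cdot \|\widehat\beta_i-\beta_i^\star\|_{\mcH}
\;\leq\; \Bigl(\max_{1\leq i\leq I}\|\eg^\top \bX^{(i)}\|_{\mcH}\Bigr)\cdot \|\widehat\beta-\beta^\star\|_{\ell_1/\mcH}.
\]
Rearranging the leftover $\lambda\|\widehat\beta\|_{\ell_1/\mcH}$ and $\lambda\|\beta^\star\|_{\ell_1/\mcH}$ terms to the appropriate sides yields exactly the stated bound.

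There is really no hard step here; the proof is deterministic (``with probability one'') and is a direct Hilbert-space analogue of the classical LASSO basic inequality. The only point that deserves care is that each manipulation (polarization of the norm, factoring $X_{ni}$ out of the inner product, Cauchy--Schwarz) must be justified in $\mcH$ rather than $\mbR$, but since $\mcH$ is a real inner product space by Assumption \ref{a:f:model}, all of these steps go through verbatim. No concentration argument or restricted eigenvalue property is needed at this stage --- those tools enter only in later steps that use this basic inequality as a starting point.
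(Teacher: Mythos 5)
Your proposal is correct and follows exactly the same route as the paper's proof: the minimizer inequality $L_{\mathsf{F}}(\widehat\beta)\leq L_{\mathsf{F}}(\beta^\star)$, expansion of the quadratic via the inner product, identification of $Y-\bX\beta^\star$ with the error terms, and then Cauchy--Schwarz in $\mcH$ followed by the $\ell_\infty$/$\ell_1$ bound. Nothing is missing.
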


\begin{lemma} \label{l:n_exp_ineq}
Let $X$ be an $\mcH$ valued Gaussian process with mean zero and covariance operator $C$.  Let $\Lambda^\top= (\lambda_1, \lambda_2, \dots)$ be a vector of the eigenvalues of $C$ (in decreasing order).  Then we have the bound
\[
P\left\{   
 \|  X\|^2 \geq \| \Lambda\|_1  + 2 \|\Lambda\|_2  \sqrt{  t } + 2  \|\Lambda\|_\infty t
\right\}
\leq \exp(-t).
\]
\begin{proof}
Using the Karhunen-Loeve expansion, we can  express
\[
\|X\|^2 \overset{\mcD}{=} \sum_{j=1}^\infty \lambda_j Z_j^2,
\]
where $\{Z_j\}$ are iid standard normal.  Note that since $X$ is square integrable, we have that $\sum_i \lambda_i  = \|\Lambda\|_1< \infty$, and therefore $\|\Lambda\|_2 < \infty $ as well.  Define the events, for $J = 1, 2, \dots$
\[
\mcA_J = \left\{  \sum_{j=1}^J \lambda_j Z_j^2 \geq \| \Lambda\|_1  + 2 \|\Lambda\|_2  \sqrt{  t } + 2 \lambda_1 t \right\}.
\]
Since $\| \Lambda\|_1  \geq \sum_{j=1}^J \lambda_j $ and $\|\Lambda\|_2^2 \geq  \sum_{j=1}^J \lambda_j^2$, it follows from Lemma 1 in \citet{laurent:massart:2000} that
\[
P(\mcA_J) \leq \exp(-t),
\]
for all $J$.  Since $ \sum_{j=1}^J \lambda_j Z_j^2$ is a strictly increasing sequence of $J$ we have that
\[
\mcA_1 \subseteq \mcA_2 \subseteq \mcA_3 \dots 
\]
Using the continuity from below of probability measures, we can pass the limit $J \to \infty$ to obtain the desired result.
\end{proof}
\end{lemma}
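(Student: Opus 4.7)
The plan is to reduce the Hilbert-space tail bound to the classical finite-dimensional concentration inequality for weighted sums of chi-squared random variables due to Laurent and Massart, and then pass to the infinite-dimensional limit by monotone convergence. First I would invoke the Karhunen--Loève decomposition for the mean-zero square-integrable Gaussian element $X$: there exist iid standard normal $Z_1, Z_2, \ldots$ such that
\[
\|X\|_{\mcH}^2 \stackrel{d}{=} \sum_{j=1}^\infty \lambda_j Z_j^2,
\]
where $\lambda_1 \geq \lambda_2 \geq \cdots$ are the eigenvalues of $C$. Square-integrability gives $\|\Lambda\|_1 = \sum_j \lambda_j < \infty$, which then also forces $\|\Lambda\|_2 < \infty$, so all quantities on the right side of the claimed bound are finite.

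Next I would apply the classical Laurent--Massart inequality (their Lemma~1), which asserts that for any $J$ and any nonnegative weights $a_1, \ldots, a_J$,
\[
P\!\left\{ \sum_{j=1}^J a_j Z_j^2 \geq \sum_{j=1}^J a_j + 2\sqrt{\sum_{j=1}^J a_j^2}\,\sqrt{t} + 2 \max_j a_j \cdot t \right\} \leq e^{-t}.
\]
Taking $a_j = \lambda_j$ and bounding $\sum_{j=1}^J \lambda_j \leq \|\Lambda\|_1$, $\sum_{j=1}^J \lambda_j^2 \leq \|\Lambda\|_2^2$, and $\max_{j \leq J} \lambda_j \leq \lambda_1 = \|\Lambda\|_\infty$ (by the ordering assumption), I obtain, for every finite $J$,
\[
P(\mathcal{A}_J) \leq e^{-t}, \quad \text{where } \mathcal{A}_J = \Bigl\{ \textstyle\sum_{j=1}^J \lambda_j Z_j^2 \geq \|\Lambda\|_1 + 2\|\Lambda\|_2 \sqrt{t} + 2\|\Lambda\|_\infty t \Bigr\}.
\]

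Finally I would pass from the finite partial sums to the full Karhunen--Loève series. Since $\lambda_j Z_j^2 \geq 0$, the partial sums $S_J := \sum_{j=1}^J \lambda_j Z_j^2$ are monotonically nondecreasing in $J$ almost surely, so the events $\mathcal{A}_J$ are nested: $\mathcal{A}_1 \subseteq \mathcal{A}_2 \subseteq \cdots$. By continuity from below of $P$,
\[
P\!\left( \bigcup_{J \geq 1} \mathcal{A}_J \right) = \lim_{J \to \infty} P(\mathcal{A}_J) \leq e^{-t}.
\]
The union is exactly the event $\{ \sum_{j=1}^\infty \lambda_j Z_j^2 \geq \|\Lambda\|_1 + 2\|\Lambda\|_2\sqrt{t} + 2\|\Lambda\|_\infty t \}$, which by the distributional identity equals the event $\{\|X\|_{\mcH}^2 \geq \|\Lambda\|_1 + 2\|\Lambda\|_2\sqrt{t} + 2\|\Lambda\|_\infty t\}$. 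This yields the claimed inequality.

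The only real subtlety is making sure the right-hand constants on the finite-$J$ bound can be replaced uniformly by the full $\ell_1, \ell_2, \ell_\infty$ norms of $\Lambda$—this works precisely because those norms dominate the corresponding quantities for any truncation, so the same threshold can be used for every $J$, enabling the monotone passage to the limit. No concentration work beyond the Laurent--Massart lemma is needed.
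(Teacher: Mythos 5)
Your proposal is correct and follows essentially the same route as the paper's own proof: Karhunen--Lo\`eve reduction to a weighted chi-squared series, the Laurent--Massart Lemma 1 applied to each partial sum with the truncated $\ell_1$, $\ell_2$, $\ell_\infty$ quantities dominated by the full norms of $\Lambda$, and then continuity from below over the nested events. No substantive differences to report.
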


\begin{lemma}\label{l:F}
Assume that $\eg = (\eg_1, \dots, \eg_N)$ are iid~Gaussian elements of $\mcH$ and $\bX$ is a design matrix with standardized columns.  Define the event
\[
\mathscr{F} = \left\{ \max_{1 \leq i \leq I}  \|\eg^\top \bX^{(i)} \|   \leq \lambda_0 \right\}
\]
where, for $\delta > 0$, 
\[
\lambda_0 = \sqrt{N}  \sqrt{ \| \Lambda\|_1  + 2 \|\Lambda\|_2 \sqrt{\log(I/\delta)}   + 2 \lambda_1  \log(I/\delta)}.
\]
Then we have that
\[
P( \mathscr{F})  \geq 1 - \delta.
\]

\begin{proof}
Notice that if $\{\eg_n\}$ are iid Gaussian with mean zero and covariance operator $C$, then $\eg^\top \bX^{(i)}/\sqrt{N}$ is also mean zero Gaussian, and the covariance operator is given by
\[
\frac{1}{N }\Cov(\eg^\top \bX^{(i)}) = \frac{1}{N}\sum_{n=1}^N \Cov(\eg_n X_{ni}) = C,
\]
since the columns of $\bX$ are standardized.  Therefore, $V_ i = \eg^\top \bX^{(i)}/\sqrt{N}$ is also Gaussian with mean zero and covariance operator $C$.  Now apply Lemma \ref{l:n_exp_ineq} with $t = -\log(\delta)$ to obtain   
\begin{align*}
 P & \left\{   
\max_{1 \leq i \leq I} \|  V_i\|^2 \geq \| \Lambda\|_1  + 2 \|\Lambda\|_2 \sqrt{t + \log(I)}   + 2 \|\Lambda\|_\infty (t + \log(I))
\right\} \\
& \leq I \exp(-(t + \log(I)) = \exp(-t).
\end{align*}
Since $ \exp(-t) = \delta$, the claim holds.
\end{proof}
\end{lemma}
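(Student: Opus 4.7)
The plan is to control each of the $I$ quantities $\|\eg^\top \bX^{(i)}\|$ individually by applying the single-variable tail bound already established in Lemma \ref{l:n_exp_ineq}, and then to combine these $I$ bounds via a union bound, with the factor $\log(I/\delta)$ absorbed into the deviation parameter.

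First I would verify that each statistic is a Gaussian element of $\mcH$ with covariance operator equal to $C$. Writing
\[
V_i \;:=\; N^{-1/2}\,\eg^\top \bX^{(i)} \;=\; N^{-1/2}\sum_{n=1}^{N} X_{ni}\,\eg_n,
\]
we see that $V_i$ is a finite linear combination of iid mean-zero Gaussian elements of $\mcH$, hence itself a mean-zero Gaussian element of $\mcH$. Its covariance operator equals $N^{-1}\bigl(\sum_{n=1}^N X_{ni}^2\bigr)\,C$, which collapses to $C$ because the columns of $\bX$ are standardized, so that $\sum_n X_{ni}^2 = N$. In particular, the eigenvalue sequence of $\Cov(V_i)$ is exactly $\Lambda$ for every $i$, with top eigenvalue $\lambda_1 = \|\Lambda\|_\infty$, so the same threshold from Lemma \ref{l:n_exp_ineq} applies uniformly in $i$.

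Next I would invoke Lemma \ref{l:n_exp_ineq} on each $V_i$ with deviation parameter $t = \log(I/\delta)$, yielding
\[
P\bigl\{\|V_i\|^2 \geq \|\Lambda\|_1 + 2\|\Lambda\|_2\sqrt{\log(I/\delta)} + 2\lambda_1\log(I/\delta)\bigr\} \;\leq\; \exp(-\log(I/\delta)) \;=\; \delta/I.
\]
A union bound over $i = 1,\dots, I$ then shows that with probability at least $1 - \delta$ the displayed inequality fails simultaneously for all $i$. Rescaling through $\|\eg^\top \bX^{(i)}\| = \sqrt{N}\,\|V_i\|$ and comparing with the definition of $\lambda_0$ (noting that $\|\Lambda\|_\infty = \lambda_1$ since $\Lambda$ is listed in decreasing order) rewrites this simultaneous bound as $\max_{1\leq i\leq I}\|\eg^\top\bX^{(i)}\| \leq \lambda_0$, i.e.~as the event $\mathscr{F}$.

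There is no substantive obstacle here: the two ingredients are (i) closure of Gaussian measures under finite linear combinations in a separable Hilbert space together with the covariance computation that exploits standardization, and (ii) a direct application of the already-proven Lemma \ref{l:n_exp_ineq} plus a union bound. The only subtlety worth flagging is that Lemma \ref{l:n_exp_ineq} is phrased for a single $\mcH$-valued Gaussian, so one must first verify that all $V_i$ share the same eigenvalue sequence $\Lambda$; once this is in hand, the same tail bound can be applied with the same threshold inside the maximum, and the proof is essentially mechanical.
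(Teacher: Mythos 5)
Your proposal is correct and follows essentially the same route as the paper: identify $V_i=\eg^\top\bX^{(i)}/\sqrt N$ as a mean-zero Gaussian element with covariance $C$ via the standardization of the columns, then apply Lemma \ref{l:n_exp_ineq} with deviation parameter $\log(I/\delta)$ and a union bound over $i$. Your covariance computation is slightly more explicit than the paper's, but the argument is the same.
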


\begin{lemma} \label{l:bound}
On $\mscF$ with $\lambda \geq 2 \lambda_0$ we have that
\[
 \| \bX(\beta^\star-\widehat \beta)\|^2 + \lambda \| \widehat \beta_{S_0^c}\|_{\ell_1/\mcH}
% \leq  \frac{2}{N}\langle Y  - \bX \beta^\star,  \bX(\widehat \beta-\beta^\star)\rangle +  \lambda \|  \beta ^\star\|_{\ell_1} 
\leq 3 \lambda  \| ( \widehat \beta - \beta^\star)_{S_0}\|_{\ell_1/\mcH}. 
\]
\begin{proof}
Applying Lemma \ref{l:basic_ineq} and multiplying both sides by 2, we have that, on $\mscF$,
\[
 \| \bX(\beta^\star-\widehat \beta)\|^2 + 2 \lambda \| \widehat \beta\|_{\ell_1/\mcH}
% \leq  \frac{2}{N}\langle Y  - \bX \beta^\star,  \bX(\widehat \beta-\beta^\star)\rangle +  \lambda \|  \beta ^\star\|_{\ell_1} 
\leq \lambda  \|  \widehat \beta - \beta^\star\|_{\ell_1/\mcH}+ 2 \lambda \|  \beta^\star\|_{\ell_1/\mcH}. 
\]
Analyzing the penalty term on the LHS, we can apply the reverse triangle inequality to obtain
\[
\| \widehat \beta\|_{\ell_1/\mcH} = \| \widehat \beta_{S_0}\|_{\ell_1/\mcH} + \| \widehat \beta_{S_0^c} \|_{\ell_1/\mcH}
\geq   \|  \beta_{S_0}^\star \|_{\ell_1/\mcH} - \| \widehat \beta_{S_0} - \beta_{S_0}^\star \|_{\ell_1/\mcH} + \| \widehat \beta_{S_0^c} \|_{\ell_1/\mcH}
\]
and on the RHS we we have
\[
\| \widehat \beta - \beta^\star \|_{\ell_1/\mcH} 
= \| \widehat \beta_{S_0} - \beta_{S_0}^\star \|_{\ell_1/\mcH} +  \| \widehat \beta_{S_0^c}\|_{\ell_1/\mcH}.
\]
Combining these two calculations we arrive at the bound
\begin{align*}
 \| \bX(\beta^\star-\widehat \beta)\|^2 
 \leq & \lambda  \|  \widehat \beta - \beta^\star\|_{\ell_1/\mcH}+ 2 \lambda \|  \beta^\star\|_{\ell_1/\mcH}
-  2 \lambda \|  \widehat \beta \|_{\ell_1/\mcH} \\
 \leq & \lambda( \| \widehat \beta_{S_0} - \beta_{S_0}^\star \|_{\ell_1/\mcH} +  \| \widehat \beta_{S_0^c}\|_{\ell_1/\mcH}) 
+ 2 \lambda \|  \beta^\star_{S_0}\|_{\ell_1/\mcH}  \\
 & - 2 \lambda( \|  \beta_{S_0}^\star \|_{\ell_1/\mcH} - \| \widehat \beta_{S_0} - \beta_{S_0}^\star \|_{\ell_1/\mcH} + \| \widehat \beta_{S_0^c} \|_{\ell_1/\mcH}) \\
  = & 3 \lambda   \| \widehat \beta_{S_0} - \beta_{S_0}^\star \|_{\ell_1/\mcH} - \lambda  \| \widehat \beta_{S_0^c} \|_{\ell_1/\mcH},
\end{align*}
which is the desired result.
\end{proof}
\end{lemma}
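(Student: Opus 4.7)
The plan is to start from the basic inequality of Lemma \ref{l:basic_ineq}, use the event $\mscF$ together with the assumption $\lambda\geq 2\lambda_0$ to replace the stochastic factor $\max_i\|\eg^\top\bX^{(i)}\|$ by $\lambda/2$, and then carefully split the $\ell_1/\mcH$ norms of $\widehat\beta$ and $\widehat\beta-\beta^\star$ along the support $S_0$ of $\beta^\star$ and its complement, so that the $\| \widehat\beta_{S_0^c}\|_{\ell_1/\mcH}$ term appears naturally on the left-hand side with the correct sign.

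First I would apply Lemma \ref{l:basic_ineq}; on the event $\mscF$, we have $\max_i\|\eg^\top\bX^{(i)}\|\leq \lambda_0\leq \lambda/2$, so that
\[
\tfrac{1}{2}\|\bX(\widehat\beta-\beta^\star)\|_{\mcH}^2 + \lambda\|\widehat\beta\|_{\ell_1/\mcH}
\leq \tfrac{\lambda}{2}\|\widehat\beta-\beta^\star\|_{\ell_1/\mcH}+\lambda\|\beta^\star\|_{\ell_1/\mcH}.
\]
Multiplying through by $2$ gives
\[
\|\bX(\widehat\beta-\beta^\star)\|_{\mcH}^2 + 2\lambda\|\widehat\beta\|_{\ell_1/\mcH}
\leq \lambda\|\widehat\beta-\beta^\star\|_{\ell_1/\mcH}+2\lambda\|\beta^\star\|_{\ell_1/\mcH},
\]
which mirrors the scalar LASSO argument but with all norms understood in the $\mcH$-valued sense.

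Next I would decompose along $S_0$. Because $\beta^\star_{S_0^c}=0$, the reverse triangle inequality gives the lower bound
\[
\|\widehat\beta\|_{\ell_1/\mcH}
=\|\widehat\beta_{S_0}\|_{\ell_1/\mcH}+\|\widehat\beta_{S_0^c}\|_{\ell_1/\mcH}
\geq \|\beta^\star_{S_0}\|_{\ell_1/\mcH}-\|(\widehat\beta-\beta^\star)_{S_0}\|_{\ell_1/\mcH}+\|\widehat\beta_{S_0^c}\|_{\ell_1/\mcH},
\]
while the triangle inequality gives
\[
\|\widehat\beta-\beta^\star\|_{\ell_1/\mcH}
=\|(\widehat\beta-\beta^\star)_{S_0}\|_{\ell_1/\mcH}+\|\widehat\beta_{S_0^c}\|_{\ell_1/\mcH}.
\]
Substituting both decompositions into the displayed inequality, the $\|\beta^\star_{S_0}\|_{\ell_1/\mcH}$ terms cancel, the $\|(\widehat\beta-\beta^\star)_{S_0}\|_{\ell_1/\mcH}$ terms combine with coefficient $\lambda+2\lambda=3\lambda$, and the $\|\widehat\beta_{S_0^c}\|_{\ell_1/\mcH}$ terms combine with coefficient $2\lambda-\lambda=\lambda$ on the left, yielding
\[
\|\bX(\widehat\beta-\beta^\star)\|_{\mcH}^2+\lambda\|\widehat\beta_{S_0^c}\|_{\ell_1/\mcH}
\leq 3\lambda\|(\widehat\beta-\beta^\star)_{S_0}\|_{\ell_1/\mcH},
\]
which is the desired conclusion.

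There is no real obstacle here, since the functional setting only changes the meaning of $\|\cdot\|_{\ell_1/\mcH}$ rather than the algebra; the only place to be careful is ensuring that the triangle and reverse triangle inequalities hold for the norm $\|\cdot\|_{\ell_1/\mcH}$, which is immediate because $\|\cdot\|_{\ell_1/\mcH}$ is a sum of Hilbert-space norms and hence is itself a norm on $\mcH^I$. Also, the step $\|\widehat\beta\|_{\ell_1/\mcH}=\|\widehat\beta_{S_0}\|_{\ell_1/\mcH}+\|\widehat\beta_{S_0^c}\|_{\ell_1/\mcH}$ uses the additivity of $\|\cdot\|_{\ell_1/\mcH}$ over disjoint index sets, which is direct from its definition.
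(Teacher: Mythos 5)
Your proposal is correct and follows essentially the same route as the paper's proof: invoke the basic inequality, use $\mscF$ and $\lambda\geq 2\lambda_0$ to absorb the stochastic term into $\lambda/2$, then split the $\ell_1/\mcH$ norms over $S_0$ and $S_0^c$ via the (reverse) triangle inequality and collect terms. The algebra checks out, and your added remark that $\beta^\star_{S_0^c}=0$ justifies the decomposition of $\|\widehat\beta-\beta^\star\|_{\ell_1/\mcH}$ is a point the paper leaves implicit.
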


\begin{proof}[Proof of Theorem \ref{t:highfreq_main}]
We examine both terms together to obtain
\begin{align*}
\| \bX(\widehat \beta-\beta^\star)\|^2 + \lambda \| \widehat \beta - \beta^\star\|_{\ell_1/\mcH} 
=  \|\bX(\widehat \beta-\beta^\star)\|^2 + \lambda \|( \widehat \beta - \beta^\star)_{S_0} \|_{\ell_1/\mcH} + \lambda \| \widehat \beta_{S_0^c} \|_{\ell_1/\mcH}.
\end{align*}
Applying Lemmas \ref{l:F} and \ref{l:bound} we have that, with probability $1-\delta$,
\[
\| \bX(\widehat \beta-\beta^\star)\|^2 + \lambda \|( \widehat \beta - \beta^\star)_{S_0} \|_{\ell_1/\mcH} + \lambda \| \widehat \beta_{S_0^c} \|
\leq 4 \lambda \|( \widehat \beta - \beta^\star)_{S_0} \|_{\ell_1/\mcH}\leq 4\lambda\sqrt{I_0}\|( \widehat \beta - \beta^\star)_{S_0} \|.
\]
Applying Definition \ref{d:f:comp}, we have that the RHS above is bounded by
\[
4 \lambda \sqrt{I_0}\|( \widehat \beta - \beta^\star)_{S_0} \|
\leq \frac{4 \lambda \sqrt{I_0}}{\sqrt{\alpha  N}} \| \bX(\beta^\star - \widehat \beta)\|
\leq \frac{1}{2} \| \bX(\beta^\star-\widehat \beta)\|^2 + \frac{8 \lambda^2 I_0}{\alpha N}  .
\]
Note the last inequality follows from the simple bound $4uv \leq \frac{1}{2}u^2+8v^2$ for any real $u$ and $v$ (the LHS just completes the square).  We therefore have that
\[
 \| \bX(\beta^\star-\widehat \beta)\|^2 + \lambda \|( \widehat \beta - \beta^\star)_{S_0} \|_{\ell_1/\mcH} + \lambda \| \widehat \beta_{S_0^c} \|_{\ell_1/\mcH}
\leq 
\frac{1}{2} \| \bX(\beta^\star-\widehat \beta)\|^2 + \frac{8 \lambda^2 I_0}{\alpha N} \;.
\]
This implies
\[
\frac{1}{2} \| \bX(\beta^\star-\widehat \beta)\|^2 + \lambda \| \widehat \beta - \beta^\star\|_{\ell_1/\mcH}  \leq \frac{8 \lambda^2 I_0}{\alpha N },
\]
which proves the claim.
\end{proof}

\end{document}